\newtheorem{theorem}{Theorem}[section]
\newtheorem{lemma}[theorem]{Lemma}
\newtheorem{corollary}[theorem]{Corollary}
\theoremstyle{definition}
\newtheorem{definition}[theorem]{Definition}
\newtheorem{remark}[theorem]{Remark}
\numberwithin{equation}{section}
\newcommand{\ep}{\epsilon}
\newcommand{\Hol}{\mbox{{\rm Hol}}}
\newcommand{\Z}{\Bbb Z}
\newcommand{\C}{\Bbb C}
\newcommand{\R}{\Bbb R}
\newcommand{\Q}{\Bbb Q}
\newcommand{\T}{\Bbb T}
\newcommand{\F}{\Bbb F}
\renewcommand{\P}{{\rm P}}
\newcommand{\SP}{\mbox{{\rm SP}}}
\newcommand{\RP}{\Bbb R\mbox{{\rm P}}}
\newcommand{\Map}{\mbox{{\rm Map}}}
\newcommand{\CP}{\Bbb C {\rm P}}
\newcommand{\dis}{\displaystyle}
\newcommand{\p}{\prime}
\newcommand{\Po}{\mbox{{\rm Poly}}}
\newcommand{\po}{\mbox{{\rm Pol}}}
\newcommand{\SZ}{{\mathcal{X}}^{d}}
\newcommand{\I}{\mbox{{\rm (i)}}}
\newcommand{\II}{\mbox{{\rm (ii)}}}
\newcommand{\III}{\mbox{{\rm (iii)}}}
\title{\bf The homotopy type of spaces of resultants
of bounded multiplicity}
\author{Andrzej Kozlowski\footnote{%
Institute of Applied Mathematics and Mechanics,
University of Warsaw, Banacha 2, 02-097 Warsaw, Poland
(E-mail: akoz@mimuw.edu.pl)
}
\  and \ 
Kohhei Yamaguchi\footnote{%
Department of Mathematics,
University of Electro-Communications,  Chofu, Tokyo 182-8585, Japan
(E-mail: kohhe@im.uec.ac.jp)
\newline
\quad 2010 {\it Mathematics Subject Classification.} Primary 55P10; Secondly 55R80, 55P35.}}
\date{}
\begin{document}
\maketitle

\begin{abstract}
For positive integers $m,n, d\geq 1$ with $(m,n)\not= (1,1)$ and a field
$\F$ with its algebraic closure $\overline{\F}$, 
let $\Po^{d,m}_n(\F)$ denote the space  of all
$m$-tuples $(f_1(z),\cdots ,f_m(z))\in \F [z]$
of monic polynomials of the same degree $d$ such that
polynomials $f_1(z),\cdots ,f_m(z)$ have no common root 
in $\overline{\F}$ of multiplicity
$\geq n$. These spaces were defined  by Farb and Wolfson in \cite{FW} as generalizations of spaces first studied by Arnold, Vassiliev, Segal and others  in different contexts.
In \cite{FW} 
they obtained algebraic geometrical  and arithmetic results about the topology of these spaces. 
In this paper we  investigate the homotopy type of these spaces for the case 
$\F =\mathbb{C}$.  Our results generalize those of \cite{FW} for 
$\F =\C$ and also  results of 
G. Segal
\cite{Se}, V. Vassiliev \cite{Va} and F.Cohen-R.Cohen-B.Mann-R.Milgram \cite{CCMM} for $m\geq 2$ and $n\geq 2$.
\end{abstract}

\section{Introduction}\label{section 1}
\paragraph{1.1 Historical survey.}

There are two related intriguing phenomena that have been observed in various situations and can be roughly described as follows. 
Let  $X$ and $Y$ be two manifolds with some structure (e.g. holomorphic, symplectic, real algebraic) and let $\{M_d\}$ be a family of subspaces of structure-preserving continuous mappings $X\to Y$  indexed by  \lq\lq degree\rq\rq   (for a suitable notion of index)  
with
\lq\lq stabilization mappings \rq\rq $q_d: M_d \to M_{d+1}.$ 
The first phenomenon is that  the homology (homotopy) groups of the subspaces $M_d$ stabilize, that is: the mappings $q_d$ are homology (homotopy) equivalences up to some dimension $n(d)$ which is a monotonically increasing function of $d$. The other phenomenon is that a certain limit of the subspaces $M_d$ is homology (homotopy) equivalent to
the space  of all continuous mappings $X\to Y$. 

It seems that the first appearance of such phenomena was in the work of 
V. I. Arnold \cite{Ar}. 
Arnold considered the space $\SP_n^d(\C)$ of 
complex monic polynomials of the degree $d$ without roots of multiplicity $n$. 
For the case $n=2$ this is  the same as the space of 
monic polynomials without repeated roots (with non-zero discriminant), whose fundamental group is the braid group $\mbox{Br}(d)$ 
of $d$-strings and whose cohomology is the cohomology of the braid group
$\mbox{Br}(d)$. Arnold computed the homology of these braid groups and established their 
homological stability.  The corresponding relationship between these spaces of polynomials and spaces of continuous maps is given by the May-Segal theorem \cite{Se1}.   
Similar results also hold in the real polynomial case 
(\cite{GKY2}, \cite{KY1}, \cite{Va}). 
\par\vspace{2mm}\par
Analogous another phenomena were discovered by G. Segal  \cite{Se} in a different context inspired by control theory (later it was discovered to have a close relationship with 
mathematical physics \cite{AJ}). 
Segal considered the space  
$\Hol^*_d(\CP^1,\CP^{n-1})$ 
of based holomorphic maps from $\CP^1$ to $\CP^{n-1}$ of 
the degree $d$ and its inclusion into the 
 space  $\Map_d^*(\CP^1,\CP^{n-1})=\Omega^2_d\CP^{n-1}$ 
of corresponding continuous maps.
Intuitive considerations based on Morse theory, suggest that homotopy of the first space should approximate that of the second space more and more closely as the degree $d$ increases. 
Segal 
proved this result by observing that this space of based holomorphic mappings can be identified with the space  
of 
$n$-tuples  $(f_1(z),\dots,f_n(z))\in \C [z]^n$ 
of monic polynomials
of the same degree $d$ without common roots. 
He 
defined a stabilization map 
$\Hol_d^*(\CP^1,\CP^{n-1})\to \Hol_{d+1}^*(\CP^1,\CP^{n-1})$ 
and  proved that the induced maps on homotopy groups are isomorphisms up to some dimension increasing with $d$.
Using a different technique (based on ideas of Gromov and Dold-Thom) he also proved that 
there is a homotopy equivalence 
$q: \varinjlim \Hol_d^*(\CP^1,\CP^{n-1}) 
\stackrel{\simeq}{\longrightarrow} \Omega^2_0 \CP^{n-1}$ 
defined by a \lq\lq scanning of particles\rq\rq, 
 and that this equivalence is homotopic to the inclusion of the space of all holomorphic maps into the space of all continuous maps.
\par 
With the help of the spectral sequence for complements of discriminants (analogous to the one he used in defining invariants of knots) Vassilev \cite{Va} 
 showed that 
there is a stable homotopy equivalence 
\begin{equation}\label{eq: Va0}
\text{SP}_n^{d}(\C)
\simeq_s
\Hol_{\lfloor \frac{d}{n}\rfloor}^*(\CP^1,\CP^{n-1}),
\end{equation}
where
$\lfloor x\rfloor$ denotes the integer part of a real number $x$. 
\par
The relationship between Arnold's and Segal's arguments can  
also be explained in terms of Gromov's h-principle in \cite{GKY2} (cf. \cite{Gr}), and  in \cite{GKY4} 
it was shown 
that there is
a homotopy equivalence\footnote{%
Since $\pi_1(\SP^d_2(\C))=\mbox{Br}(d)$, it is not commutative.
However, 
$\pi_1(\Hol^*_{\lfloor\frac{d}{n}\rfloor}(\CP^1,\CP^1))=\Z$.
So
two spaces 
$\SP^d_n(\C)$ and $\Hol^*_{\lfloor\frac{d}{n}\rfloor}(\CP^1,\CP^{n-1})$
are not homotopy equivalent if $n=2$.} 
\begin{equation}\label{eq: KY4-result}
\text{SP}_n^{d}(\C)
\simeq
\Hol_{\lfloor \frac{d}{n}\rfloor}^*(\CP^1,\CP^{n-1}),
\quad \mbox{ if }n\geq 3
\end{equation}
(see Theorem \ref{thm: GKY4}).
The argument makes use of the existence of a $C_2$-operad actions on the spaces $\coprod_{d\geq 0}\SP^d_n(\C)$
 and
$\coprod_{d\geq 0}\Hol_d^*(\CP^1,\CP^{n-1})$
(\cite{BM}, \cite{CS}, \cite{GKY4}).
\par\vspace{2mm}\par
Recently Benson Farb and Jesse Wolfson \cite{FW} made a remarkable discovery.  
In order to state their results in full generality \footnote{%
They have been generalized  even farther in \cite{FWW}.
}
 Farb and Wolfson defined a new algebraic variety, given in terms of $m$-tuples of monic polynomials with conditions on common roots.  
Namely, for a field $\F$ with its algebraic closure $\bar{\F}$, 
let $ \Po^{d,m}_n(\F)$ denote the space of of $m$-tuples
$(f_1(z),\cdots ,f_m(z))\in \F [z]^m$ 
of monic polynomials of the same degree $d$ with no common root 
in $\overline{\F}$ of multiplicity $n$ or greater.
\par  
For example, if $\F =\C$,  $\Po^{d,1}_n (\C)=\SP^d_n(\C)$ 
and 
$\Po^{d,n}_1 (\C)$ can be identified with the space
$\Hol_d^*(\CP^1,\CP^{n-1})$. 
Note that 
in terms of this definition 
the homotopy equivalence 
(\ref{eq: KY4-result}) can be  expressed as the homotopy equivalence
\begin{equation}\label{equ: conj}
\Po^{d,1}_n (\C)
\simeq
\Po^{\lfloor\frac{d}{n}\rfloor,n}_1(\Bbb C) 
\quad
\mbox{ if }n\geq 3.
\end{equation}

By the classical theory of 
resultants, 
the space 
$\text{Poly}_n^{d,m}(\C)$ is an affine variety defined by systems of polynomial equations with integer coefficients.
Thus both varieties given in (\ref{equ: conj})
can be defined over $\Z$ and 
(by extension of scalars or reduction modulo a prime number) 
over any field $\F$.
 Farb and Wolfson 
 computed various algebraic and arithmetic invariants 
 (such as the number of points for a finite field $\F_q$, etale cohomology etc) 
 of these varieties and found that these invariants are always equal.  They asked the natural question: are these varieties algebraically isomorphic for $n\ge 3$? 
\par
 In the simplest case $(d,n)=(3,3)$, 
 Curt MacMullen constructed an isomorphism between
 $\text{Poly}_3^{3,1}(\Bbb Z[\frac{1}{3}])$ and
  $\Po^{1,3}_1(\Bbb Z[\frac{1}{3}])$ 
 and a different isomorphism between
$\text{Poly}_3^{3,1}(\Bbb Z/3)$ and 
  $\Po^{1,3}_1(\Bbb Z/3)$ 
  (\cite{FW}).  The formula defining the first of these isomorphisms of course gives also an isomorphism over $\C$ and $\R$, and hence, of course, implies that these spaces are homeomorphic and thus homotopy equivalent. 
 This example suggests that it is unlikely that the varieties  $\Po_n^{dn,1}(\Bbb Z)$ and
 $\Po_1^{d,n}(\Bbb Z)$ are isomorphic but are more likely to be so if we invert the primes dividing $d$. 
As before, such an isomorphism of varieties (over a local ring)  induces an isomorphism over both $\Bbb C$ and $\Bbb R$. 
The question posed by Farb and Wolfson seems difficult to answer, and doing so would certainly require completely different methods from the ones used here. However, our results will lead to its generalization. Namely, in this paper we will show that 
there is a homotopy equivalence

\begin{equation}\label{equ: conj1}
\Po^{d,m}_n (\C) \simeq \Po^{\lfloor\frac{d}{n}\rfloor,mn}_1(\Bbb C)
\quad
\mbox{ if }mn\geq 3
\end{equation}
(see Theorem \ref{thm: IV}).
 This naturally leads to the question of whether this homotopy equivalence derives from an isomorphism of varieties. Such an isomorphism should be defined for varieties over $\Q$ and perhaps over the ring $\Bbb Z[S^{-1}]$  where $S$ is some set of primes. 
Of course if this is the case,  the same must hold with coefficients in $\Bbb R$.  We should therefore expect to be able to prove homotopy equivalence in the real case too.
We intend to pursue this topic in the future work \cite{KY11}.
\par
We would like to note here the key role played in our argument  by the \lq\lq jet map\rq\rq\  
(see (\ref{equ: jet}) below).
 The jet map is an algebraic map and can be defined over any field $\F$.
 Over $\C$ it induces a homotopy equivalence of suitable stabilizations 
 (Theorem \ref{thm: natural map})   but  it is not a candidate for the conjectured isomorphism since it does not have the right target space.  
\par\vspace{1mm}\par
 It is interesting to observe that  
 there is another \lq\lq real analogue \rq\rq of  (\ref{equ: conj})
 given in
\cite{KY1}.  It  is obtained by having on the left hand side  the space of real polynomials without real roots of multiplicity $\ge n$ (considered by Arnold and Vassiliev \cite{Va}) and on the right hand side the space of real rational maps from $\RP^1$ to $\RP^{n-1}$ in the sense of Mostovoy  \cite{Mo1}, which can be identified with the space 
 represented by $n$-tuples of real coefficients monic polynomials of the same degree without common real roots (but possibly with common complex roots).  
Since these spaces are semi-algebraic varieties rather than algebraic varieties,
 the above argument  does not seem to apply,
but intriguingly the analogous result of (\ref{equ: conj1})
remains true.  This 
will be proved in another work \cite{KY10}.
\par\vspace{2mm}\par

The purpose of this article is to determine the homotopy type of the  space $ \Po^{d,m}_n(\F)$ for $\F =\C$
which, from now on, will be denoted simply as 
$ \Po^{d,m}_n=\Po^{d,m}_n(\C)$ (see Definition \ref{dfn: Polydmn}).   The homotopy equivalence (\ref{equ: conj1}) is an immediate consequence of this  (Theorem \ref{thm: IV}).
Our arguments are generally analogous to those in \cite{GKY2}, \cite{GKY4} and \cite{KY6}, but the technical details are more complicated. 

\paragraph{1.2 Basic definitions and notations.}

For connected spaces $X$ and $Y$, let
$\Map(X,Y)$ (resp. $\Map^*(X,Y)$) denote the space
consisting of all continuous maps
(resp. base-point preserving continuous maps) from $X$ to $Y$
with the compact-open topology.
When $X$ and $Y$ are complex manifolds, we denote by
$\Hol (X,Y)$ (resp. $\Hol^*(X,Y))$ the subspace of
$\Map (X,Y)$ (resp. $\Map^*(X,Y))$ consisting of all holomorphic maps
(resp. base-point preserving holomorphic maps).
\par
For each integer $d\geq 0$, let
$\Map_d^*(S^2,\CP^{N-1})=\Omega^2_d\CP^{N-1}$ denote the space of all
based continuous maps
$f:(S^2,\infty)\to (\CP^{N-1},*)$ such that
$[f]=d\in \Z=\pi_2(\CP^{N-1})$,
where we identify $\CP^1=S^2=\C \cup \{\infty\}$
and the points $\infty\in S^2$ and
$*\in \CP^{N-1}$ are the base points of $S^2$
and $\CP^{N-1}$, respectively.
Let $\Hol_d^*(S^2,\CP^{N-1})$ denote the subspace of 
$\Map_d^*(S^2,\CP^{N-1})$
consisting of all based holomorphic maps of degree $d$.

\begin{remark}
Let $\P^d(\C)$ denote the space consisting of all complex
monic polynomials 
$f(z)=z^d+a_1z^{d-1}+\cdots +a_d\in \C [z]$ of the degree $d$.
\par
If we choose the point $[1:1:\cdots :1]\in\CP^{N-1}$ as its base point,
the space $\Hol_d^*(S^2,\CP^{N-1})$ can be identified with the space consisting
of all $N$-tuples $(f_1(z),\cdots ,f_{N}(z))\in \P^d(\C)^{N}$
of monic polynomials of the same degree $d$ such that polynomials
$f_{1}(z),\cdots ,f_{N}(z)$ have no common root, i.e.
the space $\Hol_d^*(S^2,\CP^{N-1})$ can be identified with
{\small
\begin{equation*}
\Hol_d^*(S^2,\CP^{N-1})=
\big\{(f_1,\cdots ,f_{N})\in \P^d(\C)^{N}:
\{f_k(z)\}_{k=1}^N
\mbox{ have no common root}\big\}.
\end{equation*}
}
\end{remark}

\begin{definition}\label{dfn: Polydmn}
(i)
Let $\SP^d_n(\C)$ denote the space of all monic polynomials
$f(z)\in \P^d(\C)$ of the degree $d$ without root of multiplicity
$\geq n$.
More generally,
for positive integers $m,n,d\geq 1$ with $(m,n)\not= (1,1)$,
let $\Po^{d,m}_n$ denote the space of all
$m$-tuples $(f_1(z),\cdots ,f_m(z))\in (\P^d(\C))^m$ of
monic polynomials of the same degree $d$
such that
polynomials $f_1(z),\cdots ,f_m(z)$ have no common root of multiplicity
$\geq n$.
\par
(ii)
Let $(f_1(z),\cdots ,f_m(z))\in \P^d(\C)^m$.
Note that
$(f_1(z),\cdots ,f_m(z))\in \Po^{d,m}_n$ iff
the polynomials
$\{f_j^{(k)}(z):1\leq j\leq m,\ 0\leq k<n\}$
have no common root.
In this situation, define {\it the jet map}
\begin{equation}\label{equ: jet}
j^{d,m}_n:\Po^{d,m}_n\to\Hol_d^*(S^2,\CP^{mn-1})
\qquad
\mbox{by}
\end{equation}
\begin{equation}\label{equ: jet embedding}
j^{d,m}_n(f_1(z),\cdots ,f_m(z))=
(\textit{\textbf{f}}_1(z),\cdots ,\textit{\textbf{f}}_m(z))
\end{equation}
for $(f_1(z),\cdots ,f_m(z))\in \Po^{d,m}_n$, 
where
$\textit{\textbf{f}}_k(z)$ $(k=1,2,\cdots ,m)$
denotes the $n$-tuple of monic polynomials of the same degree $d$ defined by
\begin{equation}\label{equ: bff}
\textit{\textbf{f}}_k(z)=(f_k(z),f_k(z)+f^{\p}_k(z),f_k(z)+f^{\p\p}_k(z),
\cdots ,f_k(z)+f^{(n-1)}_k(z)).
\end{equation}
\par
Let $i^{d,m}$ denote the natural map
$i^{d,m}_n:\Po^{d,m}_n\to \Omega^2_d\CP^{mn-1}\simeq \Omega^2S^{2mn-1}$ defined by
\begin{equation}\label{equ: jet inclusion}
i^{d,m}_n(f_1(z),\cdots ,f_m(z))(\alpha)
=
\begin{cases}
[\textit{\textbf{f}}_1(\alpha):\cdots :\textit{\textbf{f}}_m(\alpha)]
& \mbox{ if }\alpha \in\C
\\
[1:1:\cdots :1]
& \mbox{ if }\alpha =\infty
\end{cases}
\end{equation} 
for $(f_1(z),\cdots ,f_m(z))\in \Po^{d,m}_n$ and
$\alpha \in S^2=\C \cup \infty$.
\qed
\end{definition}

\begin{remark}\label{rmk: 1.3}
(i)
Note that $\SP^d_n(\C)=\Po^{d,1}_n$ for $m=1$, and
that  
we can  identify $\Po^{d,m}_1=\Hol_d^*(S^2,\CP^{m-1})$
for $n=1$.
It is easy to see that there is a homeomorphism
\begin{equation}\label{eq: Poly}
\Po^{d,m}_n
\cong
\begin{cases}
\C^{dm} & \mbox{if }d<n,
\\
\C^{mn}\setminus
\{(x,\cdots ,x):x\in\C\} & \mbox{if }d=n.
\end{cases}
\end{equation}
Thus in this paper we shall mainly consider the case $m\geq 2$ with 
$d\geq n\geq 2$.
\par
(ii)
A map $f:X\to Y$ is called {\it a homotopy equivalence} 
(resp. {\it a homology equivalence}) {\it through dimension} $D$
if the induced homomorphism
$f_*:\pi_k(X)\to \pi_k(Y)$
(resp. $f_*:H_k(X,\Z)\to H_k(Y,\Z))$
is an isomorphism for any $k\leq D$.
\qed
\end{remark}


\paragraph{1.3 The related known results. }
Now, recall the following known results.
First, consider the case $m=1$.
Note that $\Po^{d,1}_n=\SP^d_n(\C)$.

\begin{theorem}
[\cite{GKY2}, \cite{KY7}]
\label{thm: KY7}
The jet map 
$$
j^{d,1}_n:\SP^d_n(\C)\to \Omega^2_d\CP^{n-1}
$$ is a
homotopy equivalence through dimension
$(2n-3)(\lfloor \frac{d}{n}\rfloor +1)-1$ if $n\geq 3$ and 
it is a
homology equivalence thorough dimension $\lfloor \frac{d}{2}\rfloor$
if $n=2$. 
\qed
\end{theorem}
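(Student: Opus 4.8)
The strategy is the standard one for this circle of results: compare $\SP^d_n(\C)$ with its target via a "simplicial resolution / non-degenerate simplicial resolution" (à la Vassiliev) built on the discriminant, and then run a spectral-sequence argument. First I would realize $\SP^d_n(\C)=\P^d(\C)\setminus \Sigma_d$, where $\Sigma_d\subset\P^d(\C)\cong\C^d$ is the discriminant hypersurface consisting of those monic degree-$d$ polynomials that do have some root of multiplicity $\geq n$. Since $\P^d(\C)$ is contractible, Alexander duality gives $\tilde H^k(\SP^d_n(\C))\cong \bar H_{2d-k-1}(\Sigma_d)$ (Borel--Moore homology), so the problem is reduced to computing the Borel--Moore homology of $\Sigma_d$ in a range. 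To do this I would construct the (truncated, non-degenerate) simplicial resolution $\pi\colon \mathcal{X}_d\to\Sigma_d$ associated to the family of conditions "$z_0$ is a root of multiplicity $\geq n$ of $f$" indexed by $z_0\in\C$; the map $\pi$ is a proper homotopy equivalence, so it suffices to compute $\bar H_*(\mathcal{X}_d)$.

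Next I would set up the geometric filtration $\{\mathcal{X}_{d,k}\}_k$ of $\mathcal{X}_d$ by the number of distinct "bad" roots being resolved, and analyze the associated spectral sequence $E^1_{k,s}\Rightarrow \bar H_{k+s}(\mathcal{X}_d)$. The key geometric input is the description of the strata $\mathcal{X}_{d,k}\setminus\mathcal{X}_{d,k-1}$: over a configuration of $k$ distinct points in $\C$, each carrying a multiplicity-$\geq n$ condition (using up $kn$ of the $d$ available "slots"), the remaining polynomial data is an affine space of complex dimension $d-kn$, and the simplicial coordinate contributes an open $(k-1)$-simplex. Thus each stratum is (the total space of) an affine bundle over the configuration space $C_k(\C)$ twisted by the sign representation of $\Sigma_k$ (coming from the orientation of the simplex), and one identifies its Borel--Moore homology with $H_*(C_k(\C);\pm\Z)$ up to the appropriate dimension shift. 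Here the range $k\leq \lfloor d/n\rfloor$ (equivalently $d-kn\geq 0$) is exactly what makes the filtration nonempty and controls where the spectral sequence stabilizes; this is the source of the $\lfloor d/n\rfloor$ appearing in the statement.

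The final step is to match this spectral sequence with the analogous one computing $H_*(\Omega^2_d\CP^{n-1})\simeq H_*(\Omega^2 S^{2n-1})$ — recall $\CP^{n-1}$ and $S^{2n-1}$ agree through the relevant range since the attaching cell of $\CP^{n-1}$ is in a high dimension — via the Snaith-type stable splitting of $\Omega^2S^{2n-1}$, whose pieces are again Thom spaces over $C_k(\C)$ with the sign twist. One checks that the jet map $j^{d,1}_n$ induces a map of (truncated) spectral sequences which is an isomorphism on $E^1$ in the range where both filtrations have the same length, i.e. for total degree up to roughly $(2n-3)(\lfloor d/n\rfloor+1)-1$; the numerology $(2n-3)$ comes from the dimension $2n-3$ of $S^{2n-1}$ relative to the simplex degree, while the truncation length $\lfloor d/n\rfloor+1$ gives the stated bound. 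For $n\geq 3$ the spaces are simply connected (or have abelian $\pi_1$) so a homology equivalence in this range upgrades to a homotopy equivalence through that dimension; for $n=2$ the fundamental group is the braid group and the target has $\pi_1=\Z$, so one only gets a homology equivalence, and the range degrades to $\lfloor d/2\rfloor$ because the sign-twisted configuration space homology $H_*(C_k(\C);\pm\Z)$ is concentrated in a single degree when $n=2$, killing many $E^1$ terms and forcing the cruder estimate. The main obstacle I anticipate is the careful bookkeeping in identifying the "non-compact" correction terms in the truncated simplicial resolution (the strata with $k>\lfloor d/n\rfloor$ that get collapsed) and verifying that they do not interfere with the comparison map below the stated dimension — this is where the degree estimates must be done precisely rather than up to a constant.
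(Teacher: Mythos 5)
Your outline is essentially the proof given in the cited sources \cite{GKY2}, \cite{KY7} — note that the present paper states Theorem \ref{thm: KY7} without proof, quoting it from those references — and it is the same machinery (Alexander duality, non-degenerate simplicial resolution of the discriminant, $E^1_{k,s}\cong H_{s-2(n-1)k}(C_k(\C),\pm\Z)$, comparison with the Snaith filtration of $\Omega^2S^{2n-1}$) that the paper redeploys in Sections 3--5 for general $(m,n)$. One side remark in your last paragraph is wrong, though harmless: for $n=2$ the bound $\lfloor d/2\rfloor$ is simply the general formula $(2n-3)(\lfloor \frac{d}{n}\rfloor+1)-1$ evaluated at $n=2$, not a degraded range caused by $H_*(C_k(\C),\pm\Z)$ being concentrated in a single degree (it is not, already for moderate $k$, as the mod $2$ homology of $\Omega^2S^3$ shows); the only genuine difference at $n=2$ is the one you correctly identify, namely that $\pi_1(\SP^d_2(\C))=\mbox{Br}(d)$ is non-abelian while $\pi_1(\Omega^2_d\CP^1)=\Z$, so the homology equivalence cannot be promoted to a homotopy equivalence.
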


Next, consider the case $m\geq 2$ and $n=1$.
In this case,  we can identify $\Po^{d,m}_1=\Hol_d^*(S^2,\CP^{m-1})$
and the following result is known.

\begin{theorem}[\cite{KY6}, \cite{Se}]\label{thm: KY6}
If $m\geq 2$, the inclusion map
$$
i_d=j^{d,m}_1:\Hol_d^*(S^2,\CP^{m-1})\to \Omega^2_d\CP^{m-1}
\simeq \Omega^2S^{2m-1}
$$
is a homotopy equivalence through dimension 
$(2m -3)(d+1)-1$.
\qed
\end{theorem}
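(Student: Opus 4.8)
The plan is to identify $\Hol_d^*(S^2,\CP^{m-1})$ with the space of $m$-tuples of monic polynomials of common degree $d$ with no common root, and to analyze the complement of the ``resultant'' discriminant via the Vassiliev-type spectral sequence, exactly as in Segal's original argument refined by the second author in \cite{KY6}. First I would recall that $\Hol_d^*(S^2,\CP^{m-1})$ sits inside the affine space $\P^d(\C)^m\cong\C^{dm}$ as the complement of the ``resultant variety'' $\Sigma_d=\{(f_1,\dots,f_m): f_1,\dots,f_m \text{ have a common root}\}$. One then builds a Vassiliev-type simplicial resolution of $\Sigma_d$ by recording, for each point of $\Sigma_d$, the finite set of common roots in $\C$; this gives a resolution whose filtration by number of roots yields a spectral sequence converging (via Alexander duality in the one-point compactification) to the homology of $\Hol_d^*(S^2,\CP^{m-1})$.

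The key computational steps are: (i) for the $r$-th filtration stratum, the relevant configuration-space bundle has fiber the space of $m$-tuples of monic degree-$d$ polynomials all vanishing at $r$ prescribed distinct points, which is an affine space of (complex) dimension $m(d-r)$ when $r\le d$, fibered over the configuration space $F(\C,r)/\Sigma_r$ of $r$ unordered points; (ii) computing the contribution of each stratum using the homology of configuration spaces with the appropriate (twisted, by the sign representation coming from the simplicial resolution) coefficients; (iii) reading off that the first stratum that can contribute to the ``defect'' — i.e. the first place where $i_d$ fails to be an isomorphism — occurs in dimension governed by the largest $r$, namely $r=d+1$, since for $r\le d$ the affine fibers are non-empty. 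A dimension count of the form $2m(d-r) + (\text{configuration-space dimension})$ against the codimension yields, after Alexander duality, that the map $i_d$ is a homology equivalence through dimension $(2m-3)(d+1)-1$. Finally, since both $\Hol_d^*(S^2,\CP^{m-1})$ and $\Omega^2_d\CP^{m-1}\simeq\Omega^2 S^{2m-1}$ are simply connected when $m\ge 2$, a homology equivalence through a range is a homotopy equivalence through the same range by the relative Hurewicz theorem, which upgrades the conclusion to a homotopy equivalence through dimension $(2m-3)(d+1)-1$.

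Alternatively — and this is closer to how \cite{KY6} is organized — one can compare $\Hol_d^*$ with $\Omega^2_d\CP^{m-1}$ by embedding both into a common ``stabilized'' object: one constructs an auxiliary space $X_d$ together with maps $\Hol_d^*(S^2,\CP^{m-1})\hookrightarrow X_d$ and $X_d\to \Omega^2_d\CP^{m-1}$ whose composite is $i_d$, shows the first map is a homotopy equivalence through the stated range by a scanning/stabilization argument and the second is a homotopy equivalence through a comparable range using the group-completion theorem for the $C_2$-operad structure on $\coprod_d \Hol_d^*(S^2,\CP^{m-1})$. Either route reduces the theorem to the stable splitting of $\Omega^2 S^{2m-1}$ and a careful bookkeeping of filtration degrees.

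The main obstacle I expect is step (ii)–(iii): controlling precisely the stable range, i.e.\ pinning down that the \emph{first} nonzero differential or extension problem in the spectral sequence lies in dimension exactly $(2m-3)(d+1)$ and not lower. This requires handling the twisted coefficients on the configuration spaces $F(\C,r)/\Sigma_r$ correctly (the local system is the one-dimensional sign representation tensored with the orientation data from the $m(d-r)$-dimensional affine fibers), and verifying a non-trivial vanishing range for $H_*(F(\C,r)/\Sigma_r;\pm\Z)$ that is sharp enough. Getting the constant $2m-3$ rather than something weaker hinges on this estimate, together with the observation that the $r=d+1$ stratum — where the affine fiber degenerates — is what produces the genuine boundary of the stable range.
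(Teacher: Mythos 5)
First, note that the paper does not actually prove this statement: Theorem \ref{thm: KY6} is quoted as a known result from \cite{KY6} and \cite{Se}, with no proof supplied here. Your outline is nevertheless essentially the standard argument, and it is exactly the strategy this paper uses for its generalization (Sections 3--5): simplicial resolution of the discriminant, Alexander duality, the affine-bundle strata over $C_r(\C)$ of real rank $2m(d-r)+r-1$, the resulting $E^1$-term $H_{s-2(m-1)r}(C_r(\C),\pm\Z)$ truncated at $r=d$, and the observation that the first missing column $r=d+1$ can only affect homology in degrees $\geq 2(m-1)(d+1)-(d+1)=(2m-3)(d+1)$. One caveat on the mechanism: the spectral sequence by itself only computes $H_*(\Hol_d^*)$; to conclude that the specific map $i_d$ is a homology equivalence you need the combination you relegate to your ``alternative'' paragraph, namely stability of $s_d$ (comparison of the truncated spectral sequences) plus the scanning/group-completion identification of the colimit with $\Omega^2_0\CP^{m-1}$. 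That combination is not optional --- it is the only way the target enters the argument.

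The one genuine gap is your final step. You assert that both $\Hol_d^*(S^2,\CP^{m-1})$ and $\Omega^2_d\CP^{m-1}\simeq\Omega^2S^{2m-1}$ are simply connected for $m\geq 2$; this is false for $m=2$, since $\pi_1(\Omega^2S^3)\cong\pi_3(S^3)\cong\Z$ and likewise $\pi_1(\Hol_d^*(S^2,\CP^1))\cong\Z$ (equivalently, the $E^1$-term $H_{1}(C_1(\C),\pm\Z)\cong\Z$ survives in total degree $1$). So the relative Hurewicz theorem in its simply connected form does not apply when $m=2$, which is precisely the boundary case $(2m-3)(d+1)-1=d$ of Segal's original theorem. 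To close this you must either restrict the Hurewicz upgrade to $m\geq 3$ and invoke Segal's separate argument for $\CP^1$, or verify that both spaces are simple (nilpotent) --- e.g.\ via the $C_2$-structure showing $\pi_1$ is abelian and acts trivially on higher homotopy --- and then use the Whitehead theorem for nilpotent spaces. The paper itself is careful on exactly this point in its own range of cases (Lemma \ref{lmm: abelian} requires $m\geq 2$ and $n\geq 2$, so $2mn-5\geq 3$), which is why the low cases are handled by citation rather than by the simply connected Hurewicz argument.
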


We also recall the stable result obtained by
F.Cohen-R.Cohen-B.Mann-R.Milgram and its improvement due to
R.Cohen-D.Shimamoto
(\cite{CCMM}, \cite{CCMM2}, \cite{CS}).
\begin{theorem}[\cite{CCMM}, \cite{CCMM2}, \cite{CS}]
\label{thm: CCMM}
$\I$
If $m\geq 2$,
there is a stable homotopy equivalence
$$
\Hol_d^*(S^2,\CP^{m-1})\simeq_s
\bigvee_{k=1}^d\Sigma^{2(m-2)k}D_k,
$$
where $\Sigma^kX$ denotes the $k$-fold reduced suspension of a
based space $X$ and $D_k$ is the equivariant half smash product 
$D_k=F(\C,k)_+\wedge_{S_k}(\wedge^kS^1)$
defined by (\ref{equ: DkS}).
\par
$\II$ In particular, if $m\geq 3$, there is a homotopy equivalence
$$
\Hol_d^*(S^2,\CP^{m-1})\simeq
J_2(S^{2m-3})_d,
$$
where $J_2(S^{2m-3})_d$ denotes the $d$-th stage filtration of 
the May-Milgram model for $\Omega^2S^{2m-1}$ defined by 
(\ref{eq: d-th MM-model}).
\qed
\end{theorem}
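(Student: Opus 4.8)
The plan is to deduce both parts from Segal's description of $\Hol_d^*(S^2,\CP^{m-1})$ as a filtration stage of the May--Milgram model for $\Omega^2S^{2m-1}$, together with the Cohen--May--Taylor (Snaith-type) stable splitting of configuration spaces. Recall from Segal \cite{Se} that the stabilization maps $\Hol_d^*(S^2,\CP^{m-1})\hookrightarrow\Hol_{d+1}^*(S^2,\CP^{m-1})$, obtained by multiplying in a root far out in $\C$, have colimit homotopy equivalent (via the scanning map) to $\Omega^2_d\CP^{m-1}\simeq\Omega^2S^{2m-1}=\Omega^2\Sigma^2S^{2m-3}$; and that, as shown in \cite{CCMM}, under the identification of the latter with the little $2$-cubes model $C_2(S^{2m-3})$ the subspace $\Hol_d^*(S^2,\CP^{m-1})$ corresponds up to homotopy to the length-$d$ filtration $F_dC_2(S^{2m-3})$, compatibly with the jet map $i_d$ of Theorem \ref{thm: KY6} in the stable range. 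Hence it suffices to understand $F_dC_2(S^{2m-3})$.

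For part $\I$, apply the Cohen--May--Taylor stable splitting: for any connected $X$ there is a stable homotopy equivalence $F_dC_2(X)\simeq_s\bigvee_{k=1}^dD_k(X)$, where $D_k(X)=F(\R^2,k)_+\wedge_{S_k}X^{\wedge k}$. Writing $S^{2m-3}=\Sigma^{2(m-2)}S^1$ and using the standard property of the functors $D_k$ that even suspension coordinates pull out, $D_k(\Sigma^{2j}Y)\cong\Sigma^{2jk}D_k(Y)$ (the $S_k$-action on the suspension directions is orientation-preserving, so the associated bundle over $F(\R^2,k)/S_k$ is trivial; cf. \cite{CCMM}), with $j=m-2$ and $Y=S^1$ we obtain $D_k(S^{2m-3})\cong\Sigma^{2(m-2)k}D_k$ with $D_k=F(\C,k)_+\wedge_{S_k}(\wedge^kS^1)$. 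Combining the two displays gives the desired stable equivalence $\Hol_d^*(S^2,\CP^{m-1})\simeq_s\bigvee_{k=1}^d\Sigma^{2(m-2)k}D_k$.

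For part $\II$, note that $J_2(S^{2m-3})_d$ is by construction the stage $F_dC_2(S^{2m-3})$ of the May--Milgram model, so the assertion is that when $m\geq 3$ the filtration-preserving scanning map $\Hol_d^*(S^2,\CP^{m-1})\to J_2(S^{2m-3})_d$ is a genuine homotopy equivalence, not merely a stable one. I would argue in three steps. First, by the stratification analysis underlying part $\I$ this map induces an isomorphism on each stable summand $D_k(S^{2m-3})$, hence is a stable equivalence and in particular an integral homology isomorphism. Second, for $m\geq 3$ both spaces are simply connected: $J_2(S^{2m-3})_d$ is built from cells of dimension $\geq 2m-3\geq 3$, while $\Hol_d^*(S^2,\CP^{m-1})$ is simply connected by Theorem \ref{thm: KY6}, since the isomorphism range $(2m-3)(d+1)-1$ is at least $3$ and $\pi_1(\Omega^2S^{2m-1})=\pi_3(S^{2m-1})=0$. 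Third, a homology equivalence between simply connected spaces of the homotopy type of CW complexes is a homotopy equivalence, which gives the claim. (This breaks down for $m=2$ precisely because $S^1$ is not simply connected, in accord with the braid-group phenomenon noted in the introduction.)

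The substantive step --- and the main obstacle --- is the geometric input behind the identification $\Hol_d^*(S^2,\CP^{m-1})\simeq_s\bigvee_{k=1}^d\Sigma^{2(m-2)k}D_k$, namely constructing a stable splitting map (out of electric-field/James--Hopf-type maps) and computing the subquotients of the multiplicity stratification of $\Hol_d^*(S^2,\CP^{m-1})$: one identifies the normal data of the stratum of $k$-fold coincidences and shows that the corresponding Thom complex is $\Sigma^{2(m-2)k}D_k$. This normal-bundle and Thom-space computation is exactly the content of \cite{CCMM}, \cite{CCMM2} and \cite{CS}; once it is available, the reductions above are formal.
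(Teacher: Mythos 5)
This statement is quoted by the paper from the literature (\cite{CCMM}, \cite{CCMM2}, \cite{CS}) and is given no proof in the text, so there is no internal argument to compare yours against; what follows is an assessment of your sketch on its own terms. Your overall architecture is the right one and matches how the paper handles its own generalization in \S 6--8: split off the stable pieces via a Snaith-type splitting, use $D_k(S^{2N-1})\simeq \Sigma^{2(N-1)k}D_k$ (this is exactly Lemma \ref{lmm: Snaith}), and upgrade a homology equivalence to a homotopy equivalence by simple connectivity and Whitehead. The simple-connectivity computation $\pi_1(\Omega^2S^{2m-1})=\pi_3(S^{2m-1})=0$ for $m\geq 3$, and the observation that the argument breaks at $m=2$ exactly because $S^{2m-3}=S^1$, are both correct.

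There are, however, two points where your reduction is not quite sound as written. First, for part $\I$ you take as input that $\Hol_d^*(S^2,\CP^{m-1})$ ``corresponds up to homotopy'' to $F_dC_2(S^{2m-3})=J_2(S^{2m-3})_d$. For $m\geq 3$ that unstable identification \emph{is} part $\II$ (it is the theorem of \cite{CS}, not of \cite{CCMM}), so deducing $\I$ from it inverts the logical order; and for $m=2$ it is false unstably ($\pi_1(\Hol_d^*(S^2,\CP^1))=\Z$, which is why only a stable equivalence is asserted), so the deduction cannot even be run at the only value of $m$ where $\I$ says something beyond $\II$. The correct route for $\I$, which you relegate to your final paragraph, is the direct stable splitting of $\Hol_d^*$ itself via James--Hopf/stable-section maps and the identification of the subquotients of the degree filtration with the Thom complexes $\Sigma^{2(m-2)k}D_k$ --- precisely the scheme the paper reproduces for $\Po^{dn,m}_n$ in Theorems \ref{thm: V} and \ref{thm: VI}. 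Second, in part $\II$ the comparison map is not a ``filtration-preserving scanning map $\Hol_d^*\to J_2(S^{2m-3})_d$'': scanning lands in $\Omega^2_d\CP^{m-1}$, not in a finite filtration stage. The map used by \cite{CS} (and by this paper in its analogue $\epsilon_d$ of Theorem \ref{thm: VII}) goes the other way, $J_2(S^{2m-3})_d\to\Hol_d^*(S^2,\CP^{m-1})$, and is built from the little-$2$-cubes structure maps; proving it is a homology isomorphism requires compatibility with the stable splittings on both sides (stable sections and transfer arguments as in \S 7--8 of the paper), not merely ``the stratification analysis underlying part $\I$.'' With those corrections your outline becomes the standard proof.
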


Note that the homotopy types of the spaces
$\SP^d_n(\C)$ and $\Hol^*_{\lfloor \frac{d}{n}\rfloor}(S^2,\CP^{n-1})$
are closely connected
in the following sense.

\begin{theorem}[\cite{GKY4}, \cite{Va}]\label{thm: GKY4}
\begin{enumerate}
\item[$\I$]
If $n\geq 3$, there is a homotopy equivalence\footnote{%
This result was also proved independently by S. Kallel.
}
$$
\SP^d_n(\C)\simeq 
\Hol^*_{\lfloor \frac{d}{n}\rfloor}(S^2,\CP^{n-1}).
$$
\item[$\II$]
If $n=2$, there is a stable homotopy equivalence
$$
\SP^d_2(\C)\simeq_s \Hol^*_{\lfloor \frac{d}{2}\rfloor}(S^2,\CP^{1}).
\qed
$$
\end{enumerate}
\end{theorem}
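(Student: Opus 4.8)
\medskip
\noindent\textbf{Proof strategy.}
Part $\II$ is just Vassiliev's stable equivalence (\ref{eq: Va0}) with $n=2$, so there is nothing to prove there; I concentrate on part $\I$, assume $n\geq 3$, and (since $\SP^d_n(\C)=\C^d$ for $d<n$) take $d\geq n$. In this range both spaces are simply connected: by Theorem \ref{thm: KY7} the jet map $j^{d,1}_n$ is a homotopy equivalence $\SP^d_n(\C)\to\Omega^2_d\CP^{n-1}\simeq\Omega^2S^{2n-1}$ through dimension $D:=(2n-3)(\lfloor\frac{d}{n}\rfloor+1)-1$, which is at least $1$, and $\pi_1(\Omega^2S^{2n-1})=\pi_3(S^{2n-1})=0$; similarly $\Hol^*_{\lfloor d/n\rfloor}(S^2,\CP^{n-1})$ is simply connected by Theorem \ref{thm: KY6} (or, by Theorem \ref{thm: CCMM}$\II$, it is the simply connected space $J_2(S^{2n-3})_{\lfloor d/n\rfloor}$). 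Hence, by Whitehead's theorem, it suffices to produce a map between the two spaces inducing an isomorphism on integral homology.

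Two facts will be combined. First, by Vassiliev's equivalence (\ref{eq: Va0}) together with the splitting of Theorem \ref{thm: CCMM}$\I$, both $\SP^d_n(\C)$ and $\Hol^*_{\lfloor d/n\rfloor}(S^2,\CP^{n-1})$ are stably equivalent to $\bigvee_{k=1}^{\lfloor d/n\rfloor}\Sigma^{2(n-2)k}D_k$, so their integral homology groups are abstractly isomorphic in every degree. Second, $\SP^d_n(\C)$ and $\Hol^*_{\lfloor d/n\rfloor}(S^2,\CP^{n-1})$ are both homotopy equivalent to $\Omega^2S^{2n-1}$ through dimension $D$: for the former this is Theorem \ref{thm: KY7}, and for the latter it is Theorem \ref{thm: KY6} applied with degree $\lfloor\frac{d}{n}\rfloor$ and target $\CP^{n-1}$, since that theorem gives an equivalence through dimension $(2n-3)(\lfloor\frac{d}{n}\rfloor+1)-1$, i.e.\ through the \emph{same} $D$ (and all components of $\Omega^2\CP^{n-1}$ are homotopy equivalent, $\CP^{n-1}$ being $1$-connected).

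It remains to produce a single map realizing these isomorphisms. Here I would use the $C_2$-operad actions on $\coprod_{d\geq 0}\SP^d_n(\C)$ and on $\coprod_{d\geq 0}\Hol^*_d(S^2,\CP^{n-1})$ (see the discussion following (\ref{eq: KY4-result})), which are compatible with the jet maps into the $C_2$-algebra $\coprod_d\Omega^2_d\CP^{n-1}$. Passing to the limit $d\to\infty$ and using Theorems \ref{thm: KY7} and \ref{thm: KY6}, the mapping telescopes of the $\SP^d_n(\C)$ and of the $\Hol^*_d(S^2,\CP^{n-1})$ are each homotopy equivalent, through the jet maps, to $\Omega^2S^{2n-1}$, hence to one another. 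Using the operad structure to match the natural filtrations of the two sides, one then descends to the finite stage $\big(\SP^d_n(\C),\,\Hol^*_{\lfloor d/n\rfloor}(S^2,\CP^{n-1})\big)$ and builds an honest comparison map whose effect through dimension $D$ is controlled by the jet maps, and whose effect on the remaining stable filtration layers realizes the homology isomorphism of the previous paragraph; it is therefore a $\Z$-homology isomorphism in every degree, and Whitehead's theorem yields the asserted homotopy equivalence.

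\medskip
\noindent
The real obstacle is this last step: descending the stabilized, group-completed equivalence back to the finite degree $d$ — with the correct index shift $d\leftrightarrow\lfloor\frac{d}{n}\rfloor$ — while keeping exact control of the homology in the degrees above $D$, where the jet map is no longer an equivalence. This forces one to use coherently, and simultaneously, the numerical coincidence of the two ranges in Theorems \ref{thm: KY7} and \ref{thm: KY6}, the identification $\Hol^*_{\lfloor d/n\rfloor}(S^2,\CP^{n-1})\simeq J_2(S^{2n-3})_{\lfloor d/n\rfloor}$ of Theorem \ref{thm: CCMM}$\II$, and Vassiliev's equivalence (\ref{eq: Va0}). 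It is also here that the hypothesis $n\geq 3$ is indispensable: for $n=2$ one has $\pi_1(\SP^d_2(\C))=\mbox{Br}(d)\neq\Z=\pi_1(\Hol^*_{\lfloor d/2\rfloor}(S^2,\CP^1))$, so the two spaces are genuinely not homotopy equivalent and only the stable statement of part $\II$ survives (cf.\ the footnote to (\ref{eq: KY4-result})).
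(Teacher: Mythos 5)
Your reduction of part \II\ to Vassiliev's stable equivalence (\ref{eq: Va0}) is fine, and for part \I\ you have correctly assembled the known inputs (simple connectivity of both sides, the common stable wedge decomposition, the coincidence of the stability ranges in Theorems \ref{thm: KY7} and \ref{thm: KY6}, and the $C_2$-structures). But the argument stops exactly where the theorem actually lives. Knowing that the two spaces have abstractly isomorphic integral homology in every degree, and that each maps to $\Omega^2S^{2n-1}$ by an equivalence through dimension $D$, does \emph{not} produce a single map between them that is a homology isomorphism in \emph{all} degrees: above dimension $D$ an abstract isomorphism of finitely generated groups together with some map tells you nothing (a map $\Z\to\Z$ of degree $2$ is a counterexample to the implicit inference), and Whitehead's theorem needs an actual map inducing isomorphisms everywhere. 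Your phrase ``builds an honest comparison map whose effect on the remaining stable filtration layers realizes the homology isomorphism'' is precisely the assertion to be proved, and you acknowledge as much in your closing paragraph.

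The route taken in \cite{GKY4} (and reproduced in this paper, in its generalized form, in \S\ref{section: III}--\S\ref{section: transfer}) fills this gap as follows. One takes the comparison map to be the $C_2$-structure map $\epsilon_d:J_2(S^{2n-3})_d\to \SP^{dn}_n(\C)$ built from $\SP^{n}_n(\C)\simeq S^{2n-3}$ (the $m=1$ case of Theorem \ref{thm: VII}), and proves it is a homology equivalence in all degrees by a \emph{stable splitting} argument: (a) the stabilization maps are split injective on homology, via Dold's theorem applied to transfer maps on infinite symmetric products (Lemma \ref{lmm: XI}); (b) the composite of the Snaith section $e_d$ with the structure map and the transfer to lower filtration is null-homotopic (Lemma \ref{lmm: sigma e}, from \cite{CCMM2}), so $\Psi_d$ is injective on homology with field coefficients (Lemma \ref{lmm: X}); and (c) a dimension count over $\Q$ and $\Z/p$, using the known stable homotopy type of the limit $\varinjlim_d\SP^d_n(\C)\simeq\Omega^2S^{2n-1}$, upgrades these monomorphisms to isomorphisms (Theorems \ref{thm: V}, \ref{thm: VI}). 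Only then does CCMM's identification $J_2(S^{2n-3})_d\simeq\Hol^*_d(S^2,\CP^{n-1})$ and the stability of Theorem \ref{crl: stabilization} (in its $m=1$ form) give the claimed equivalence for general $d$. Without some version of steps (a)--(c) your proposal is a plan, not a proof.
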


\paragraph{1.4 The main results. }
The main purpose of this paper is to investigate the homotopy type of the space $\Po^{d,m}_n$ and 
generalize the above results.
Since the case $m=1$ or the case $n=1$ was already well studied in the above theorems, 
we mainly consider the case $m\geq 2$ and $n\geq 2$.
Let $D(d;m,n)$ denote the positive integer defined by
\begin{equation}\label{equ: number D}
D(d;m,n)=(2mn-3)(\Big\lfloor \frac{d}{n}\Big\rfloor +1)-1.
\end{equation}
The main results of this paper is stated as follows.

\begin{theorem}\label{thm: I}
Let $m$ and $n$ be positive integers.
If $mn\geq 3$,
the natural map
$$
i^{d,m}_n:\Po^{d,m}_n \to \Omega^2_d\CP^{mn-1}\simeq \Omega^2S^{2mn-1}
$$
is a homotopy equivalence through dimension
$D(d;m,n)$. 
\end{theorem}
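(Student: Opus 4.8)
The plan is to prove Theorem \ref{thm: I} by the same scanning/stabilization strategy used in \cite{GKY2}, \cite{GKY4} and \cite{KY6}, adapted to $m$-tuples. The essential observation is that the jet map $j^{d,m}_n$ of (\ref{equ: jet}) embeds $\Po^{d,m}_n$ into $\Hol_d^*(S^2,\CP^{mn-1})$, and the natural map $i^{d,m}_n$ factors through this embedding followed by the inclusion $i_d=j^{d,mn}_1\colon\Hol_d^*(S^2,\CP^{mn-1})\hookrightarrow\Omega^2_d\CP^{mn-1}$. Since Theorem \ref{thm: KY6} already tells us that $i_d$ is a homotopy equivalence through dimension $(2mn-3)(d+1)-1$, which comfortably exceeds $D(d;m,n)=(2mn-3)(\lfloor d/n\rfloor+1)-1$, it suffices to show that the jet map $j^{d,m}_n$ itself is a homotopy equivalence through dimension $D(d;m,n)$. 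This reduction is what is recorded in Theorem \ref{thm: natural map} of the excerpt, and I would carry it out first; everything then concentrates on analyzing $j^{d,m}_n$.

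To analyze $j^{d,m}_n$ I would use the Vassiliev-type spectral sequence arising from the discriminant complement. One realizes $\Po^{d,m}_n$ as the complement, inside the affine space $\P^d(\C)^m\cong\C^{dm}$, of the "resultant variety" $\Sigma^{d,m}_n$ consisting of those $m$-tuples having a common root of multiplicity $\ge n$ in $\C$. Using the standard truncated simplicial resolution of $\Sigma^{d,m}_n$ (a one-point compactified "space of configurations with labels", stratified by the number of forced common high-multiplicity roots) together with Alexander duality, one obtains a spectral sequence converging to $H^*(\Po^{d,m}_n)$; similarly one builds the analogous resolution for $\Hol_d^*(S^2,\CP^{mn-1})=\Po^{d,mn}_1$ by viewing it as the complement of the variety of $mn$-tuples with a common root. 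The jet map induces a map of these two resolutions, stratum by stratum: the key combinatorial point is that requiring a common root of multiplicity $\ge n$ for $m$ polynomials "costs" the same $mn$ conditions as requiring a simple common root for $mn$ polynomials, so the two filtered spaces have isomorphic strata in the relevant range. I would then show that the induced map of $E_1$-pages is an isomorphism in total degree $\le D(d;m,n)$, track the stable range (this is where the floor function $\lfloor d/n\rfloor$ enters: only $\lfloor d/n\rfloor+1$ configuration strata are "available" before the resolution degenerates), and invoke a comparison/convergence argument plus simple-connectivity (valid once $mn\ge3$) to upgrade the homology statement to a homotopy equivalence through dimension $D(d;m,n)$.

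The main obstacle, I expect, will be establishing the precise numerical range — showing that the first differential or extension problem that could obstruct the isomorphism of $E_1$-terms occurs exactly in total degree $D(d;m,n)+1$, not earlier. This requires a careful bookkeeping of the dimensions of the configuration-space strata: each stratum contributing near the edge of the range is of the form $F(\C,j)\times(\text{affine fibre})$ with $j\le\lfloor d/n\rfloor$, and one must verify that the Borel–Moore homology of these strata, twisted by the appropriate sign local system, matches on both sides up to the asserted dimension and that the leftover "junk" from the truncation of the resolution lives above $D(d;m,n)$. A secondary technical point is the compatibility of base points and the identification $\Omega^2_d\CP^{mn-1}\simeq\Omega^2S^{2mn-1}$, which is routine but must be stated carefully because the stabilization maps and the scanning map need to be shown homotopic in order to phrase the final answer in terms of $\Omega^2S^{2mn-1}$. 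Once the spectral-sequence comparison is in hand, the proof concludes by combining the jet-map estimate with Theorem \ref{thm: KY6} as above.
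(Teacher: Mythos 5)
Your route is genuinely different from the paper's, and it has a gap at its central step. The paper proves Theorem \ref{thm: I} by combining Theorem \ref{thm: stab1} (the stabilization maps $s^{d,m}_n$ are homology equivalences through dimension $D(d;m,n)$, proved by comparing the Vassiliev spectral sequences of $\Po^{d,m}_n$ and $\Po^{d+1,m}_n$) with Theorem \ref{thm: natural map} (the limit map $i^{\infty,m}_n$ is a homology equivalence, proved by scanning), and then upgrades homology to homotopy via Lemma \ref{lmm: abelian}; the jet-map statement, Corollary \ref{thm: II}, is a \emph{consequence} of Theorem \ref{thm: I}, not an ingredient. You run this in reverse: prove the jet-map statement first and deduce Theorem \ref{thm: I} from it using Theorem \ref{thm: KY6}. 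The reduction itself is formally sound (the factorization $i^{d,m}_n=i_d^{\p}\circ j^{d,m}_n$ is correct and $(2mn-3)(d+1)-1\geq D(d;m,n)$), but Theorem \ref{thm: natural map} does not record it: that theorem is the scanning statement about $i^{\infty,m}_n$, and you appear to have misread it.

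The genuine gap is in your plan to prove that $j^{d,m}_n$ is a homology equivalence through dimension $D(d;m,n)$ by a map of the two simplicial resolutions. The discriminants $\Sigma^{d,m}_n$ and $\tilde{\Sigma}^d$ sit in ambient affine spaces of different dimensions ($\C^{dm}$ versus $\C^{dmn}$), so the two spectral sequences come from Alexander duality in different codimensions; the closed embedding $\hat{j}^{d,m}_n:\Sigma^{d,m}_n\to\tilde{\Sigma}^d$ does not induce a map on compactly supported cohomology in the direction Alexander duality requires, hence does not by itself yield a morphism of spectral sequences. What you would need is a filtration-preserving \emph{open} embedding of $\mathcal{X}^d\times\C^{dm(n-1)}$ into $\tilde{\mathcal{X}}^d$ (a global filtered normal trivialization of the jet embedding), analogous to (\ref{equ: open-stab}) for the stabilization map; the paper records only stratum-wise homeomorphisms $(\mathcal{X}^d_k\setminus\mathcal{X}^d_{k-1})\times\C^{dm(n-1)}\cong\tilde{\mathcal{X}}^d_k\setminus\tilde{\mathcal{X}}^d_{k-1}$, and an abstract isomorphism of $E_1$-terms (Corollary \ref{crl: Er} and Remark \ref{rmk: spectral sequence for general N}) is not enough: without an actual morphism of spectral sequences you can compare neither the differentials nor conclude that $j^{d,m}_n$ itself realizes the isomorphism. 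By contrast, the numerical issue you single out as the main obstacle is not one: the extra columns $\lfloor\frac{d}{n}\rfloor<k\leq d$ on the $\Hol$ side contribute only in total degree $s-k\geq(2mn-3)k\geq D(d;m,n)+1$, and the differentials lower total degree, so they cannot interfere below $D(d;m,n)$ --- this is exactly the bookkeeping already carried out in the proof of Theorem \ref{thm: stab1}. The stabilization-plus-scanning argument of the paper exists precisely to avoid constructing the comparison you are assuming.
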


\begin{corollary}\label{thm: II}
Let $m$ and $n$ be positive integers.
If $mn\geq 3$,
the jet map
$$
j^{d,m}_n:\Po^{d,m}_n \to \Hol_d^*(S^2,\CP^{mn-1})
$$
is a homotopy equivalence through dimension
$D(d;m,n)$. 
\end{corollary}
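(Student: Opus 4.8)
The plan is to deduce the corollary from Theorem \ref{thm: I} by a standard two-out-of-three argument applied to the commutative triangle

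\begin{equation*}
\begin{CD}
\Po^{d,m}_n @>{j^{d,m}_n}>> \Hol_d^*(S^2,\CP^{mn-1})\\
@| @VV{i_d}V\\
\Po^{d,m}_n @>{i^{d,m}_n}>> \Omega^2_d\CP^{mn-1},
\end{CD}
\end{equation*}

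where $i_d$ is the natural inclusion of holomorphic into continuous maps. First I would verify that this triangle genuinely commutes: the map $i^{d,m}_n$ was defined in (\ref{equ: jet inclusion}) by evaluating the tuple $(\textit{\textbf{f}}_1(\alpha),\cdots,\textit{\textbf{f}}_m(\alpha))$, which is precisely the composite of the jet map $j^{d,m}_n$ from (\ref{equ: jet embedding}) followed by the inclusion $i_d:\Hol_d^*(S^2,\CP^{mn-1})\to\Omega^2_d\CP^{mn-1}$ that simply forgets holomorphicity. So $i^{d,m}_n = i_d\circ j^{d,m}_n$ is immediate from the definitions.

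Next I would invoke the two already-established inputs. Since $mn\geq 3$, Theorem \ref{thm: KY6} (with $m$ replaced by $mn$) tells us that $i_d:\Hol_d^*(S^2,\CP^{mn-1})\to\Omega^2_d\CP^{mn-1}$ is a homotopy equivalence through dimension $(2mn-3)(d+1)-1$; and Theorem \ref{thm: I} tells us that $i^{d,m}_n$ is a homotopy equivalence through dimension $D(d;m,n)=(2mn-3)(\lfloor d/n\rfloor+1)-1$. Because $\lfloor d/n\rfloor \leq d$, we have $D(d;m,n)\leq (2mn-3)(d+1)-1$, so on homotopy groups $\pi_k$ for every $k\leq D(d;m,n)$ both $(i_d)_*$ and $(i^{d,m}_n)_* = (i_d)_*\circ (j^{d,m}_n)_*$ are isomorphisms. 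It follows formally that $(j^{d,m}_n)_*:\pi_k(\Po^{d,m}_n)\to\pi_k(\Hol_d^*(S^2,\CP^{mn-1}))$ is an isomorphism for all $k\leq D(d;m,n)$; a small point to check is that the range of degrees for which $i_d$ is an equivalence comfortably contains the range $k\leq D(d;m,n)$ needed here, which it does by the inequality just noted, and that the statement \textit{through dimension} is interpreted consistently (isomorphism on $\pi_k$ for $k\leq D$, as in Remark \ref{rmk: 1.3}(ii)).

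There is essentially no obstacle here once Theorem \ref{thm: I} is in hand; the corollary is a purely formal consequence. The only mild subtlety is bookkeeping about whether one should also record the behaviour of $(j^{d,m}_n)_*$ in the borderline dimension $D(d;m,n)+1$ (surjectivity), which some versions of these statements track, but the clean \textit{homotopy equivalence through dimension $D(d;m,n)$} assertion needs nothing beyond the triangle and the two cited theorems. I would also remark, for the reader's orientation, that when $m=1$ the corollary specializes to Theorem \ref{thm: KY7} and when $n=1$ it is vacuous (the jet map is then the identity), so the content is genuinely the case $m\geq 2$, $n\geq 2$ addressed by Theorem \ref{thm: I}.
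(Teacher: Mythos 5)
Your proposal is correct and is essentially the paper's own argument: the authors use the same homotopy-commutative triangle $i^{d,m}_n \simeq i_d\circ j^{d,m}_n$, quote Theorem \ref{thm: KY6} for $i_d$ and Theorem \ref{thm: I} for $i^{d,m}_n$, and conclude via the inequality $D(d;m,n)<(2mn-3)(d+1)-1$. The only cosmetic difference is how the boundary cases are dispatched (the paper reduces to $m\geq 2$, $n\geq 2$ exactly as in the proof of Theorem \ref{thm: I}), so no changes are needed.
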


By using the above result and the result obtained by
R. Cohen and D. Shimamoto \cite{CS}, we also obtain the following two results.
\begin{theorem}\label{thm: III}
If $m$ and $n$ are positive integers with $(m,n)\not= (1,1)$,
there is a stable homotopy equivalence
$$\dis
\Po^{d,m}_n\simeq_s
\bigvee_{k=1}^{\lfloor \frac{d}{n}\rfloor}
\Sigma^{2(mn-2)k}D_k.
$$
\end{theorem}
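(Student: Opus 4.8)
The plan is to bootstrap the stable splitting of $\Hol^*_d(S^2,\CP^{mn-1})$ (Theorem \ref{thm: CCMM}\,$\I$, with $m$ replaced by $mn$) to $\Po^{d,m}_n$ by means of the jet map, after first passing to a stable range and then removing the range restriction by a standard colimit/stability argument. First I would recall that by Corollary \ref{thm: II} the jet map $j^{d,m}_n\colon\Po^{d,m}_n\to\Hol^*_d(S^2,\CP^{mn-1})$ is a homotopy equivalence through dimension $D(d;m,n)=(2mn-3)(\lfloor d/n\rfloor+1)-1$. On the other hand, Theorem \ref{thm: CCMM}\,$\I$ gives a stable splitting
\begin{equation*}
\Hol^*_d(S^2,\CP^{mn-1})\simeq_s\bigvee_{k=1}^{d}\Sigma^{2(mn-2)k}D_k,
\end{equation*}
and I would want instead a splitting indexed by $k\le\lfloor d/n\rfloor$ truncated at the same bound. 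The point is that the summand $\Sigma^{2(mn-2)k}D_k$ is $(2(mn-2)k+k-1)$-connected (since $D_k$ is built from a $k$-fold smash of $S^1$ over $F(\C,k)$, hence roughly $(k-1)$-connected), so the summands with $k>\lfloor d/n\rfloor$ contribute only in dimensions above $D(d;m,n)$; truncating the wedge at $k=\lfloor d/n\rfloor$ changes nothing through dimension $D(d;m,n)$.

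The key steps, in order, are: (1) fix $d$, $m$, $n$ with $mn\ge 3$ and let $N:=D(d;m,n)$; (2) by Corollary \ref{thm: II}, $j^{d,m}_n$ induces isomorphisms on $\pi_k$ for $k\le N$, hence (after suitable connectivity/simple-connectivity remarks, or by passing to suspension spectra) a stable equivalence through dimension $N$; (3) compose with the stable splitting of $\Hol^*_d(S^2,\CP^{mn-1})$, truncated at $k=\lfloor d/n\rfloor$, which by the connectivity estimate above agrees with the full splitting through dimension $N$; (4) this yields a stable map $\Po^{d,m}_n\to\bigvee_{k=1}^{\lfloor d/n\rfloor}\Sigma^{2(mn-2)k}D_k$ that is a stable equivalence through dimension $N$. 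To upgrade "through dimension $N$" to an honest stable equivalence, I would use the stabilization maps $\Po^{d,m}_n\to\Po^{d+1,m}_n$ (compatible with the jet maps and with the inclusions of wedge summands), note that $N=D(d;m,n)\to\infty$ as $d\to\infty$, and observe that both sides have the homotopy type of finite complexes in each fixed dimension whose cells are already present for $d$ large; since the splitting is compatible with stabilization and the bound $N$ eventually exceeds any given dimension, the stable equivalence through dimension $N$ for all $d$ assembles into a genuine stable homotopy equivalence for each fixed $d$. (Concretely: a class detecting the difference would live in some fixed degree, but for $d'\gg d$ with the same $\lfloor\cdot\rfloor$-truncation absent such a class it cannot exist; one must check the truncation index $\lfloor d/n\rfloor$ is handled correctly, which is exactly the content of step (3).)

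The main obstacle I expect is step (3)–(4): ensuring that the truncation of the wedge at $k=\lfloor d/n\rfloor$ is the \emph{correct} index and that the stabilization maps on $\Po^{d,m}_n$ interact with the splitting in the way needed to promote the stable-range equivalence to an equality of stable homotopy types. This requires the precise connectivity of $D_k=F(\C,k)_+\wedge_{S_k}(\wedge^k S^1)$ — namely that $\Sigma^{2(mn-2)k}D_k$ is at least $D(d;m,n)$-connected once $k>\lfloor d/n\rfloor$ — together with a careful comparison of the two stabilization structures (the one on $\coprod_d\Po^{d,m}_n$ coming from multiplication by a linear factor, and the one on the model $\coprod_d\Hol^*_d$ from \cite{CS}). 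Handling the degenerate cases $m=1$ (where $\Po^{d,1}_n=\SP^d_n(\C)$ and one invokes Theorem \ref{thm: KY7} or \ref{thm: GKY4}) and $n=1$ (where $\Po^{d,m}_1=\Hol^*_d(S^2,\CP^{m-1})$ and the statement is literally Theorem \ref{thm: CCMM}\,$\I$) is then routine, completing the proof for all $(m,n)\ne(1,1)$.
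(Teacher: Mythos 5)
Your steps (1)--(3) are essentially sound: composing the jet map with the splitting of Theorem \ref{thm: CCMM}(i) (for $\CP^{mn-1}$) and projecting away the summands with $k>\lfloor d/n\rfloor$ does produce a stable map $\Po^{d,m}_n\to\bigvee_{k=1}^{\lfloor d/n\rfloor}\Sigma^{2(mn-2)k}D_k$ that is an equivalence through dimension $D(d;m,n)$, since the bottom cell of $\Sigma^{2(mn-2)k}D_k$ sits in dimension $(2mn-3)k>D(d;m,n)$ once $k>\lfloor d/n\rfloor$. The genuine gap is in step (4). Write $e=\lfloor d/n\rfloor$. The retained top summand $\Sigma^{2(mn-2)e}D_e$ has nonzero homology up to degree $(2mn-2)e-1$, and $(2mn-2)e-1>(2mn-3)(e+1)-1=D(d;m,n)$ as soon as $e>2mn-3$. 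So for $d$ large \emph{both} sides have homology strictly above the range where your map is known to be an equivalence, and ``equivalence through dimension $D$ plus nothing above'' fails. The colimit argument you sketch does not repair this: to pass from ``$\phi_{d'}$ is an equivalence in degree $j$ for $d'\gg d$'' back to the fixed $d$, you need (a) that $s^{d,m}_n$ is injective on $H_*(-,\Z)$ in \emph{all} degrees, not merely through $D(d;m,n)$ (beyond that range the class you are chasing could die under stabilization), and (b) that the image of $H_*(\Po^{d,m}_n)$ in $H_*(\Po^{d',m}_n)$ has zero component in the new wedge summands $e<k\le\lfloor d'/n\rfloor$. Neither statement follows from Corollary \ref{thm: II}, Theorem \ref{thm: CCMM} or Theorem \ref{crl: stabilization}; together they are precisely Lemma \ref{lmm: X}, whose proof occupies \S\ref{section: transfer} and rests on transfer maps on infinite symmetric products (Dold's splitting criterion) and the null-homotopy of the composite of the Snaith section with the configuration-space transfer (Lemma \ref{lmm: sigma e}). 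Your connectivity observation only disposes of the \emph{discarded} summands, not of the high-degree homology of the source or of the surviving summands.

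For comparison, the paper does not argue through the jet map at all at this point: it constructs explicit stable maps $\Phi_d$ and $\Psi_d$ from the $C_2$-structure maps $\mathcal{I}_k^{\p}$ and the Snaith sections $e_k$, proves that $\Psi_d:\Sigma^{2(mn-2)d}D_d\to\Po^{dn,m}_n/\Po^{(d-1)n,m}_n$ is a stable equivalence by the transfer argument plus a dimension count against $H_*(\Omega^2S^{2mn-1},\F)$ (Theorems \ref{thm: V}, \ref{thm: VIII}, Lemma \ref{lmm: X}), and then inducts over the cofibration sequences $\Po^{(d-1)n,m}_n\to\Po^{dn,m}_n\to\Po^{dn,m}_n/\Po^{(d-1)n,m}_n$ to get Theorem \ref{thm: VI}; Theorem \ref{thm: III} follows by composing with the stabilization equivalences of Theorem \ref{crl: stabilization}. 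Your treatment of the degenerate cases $m=1$ and $n=1$ agrees with the paper's, but the core of the general case needs the transfer input you have omitted.
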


\begin{theorem}\label{thm: IV}
Let $m$ and $n$ be positive integers.
If $mn\geq 3$, then
there is a homotopy equivalence
$$\dis
\Po^{d,m}_n\simeq 
\Hol^*_{\lfloor\frac{d}{n}\rfloor}(S^2,\CP^{mn-1}).
$$
\end{theorem}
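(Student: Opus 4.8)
The plan is to derive Theorem~\ref{thm: IV} from Theorem~\ref{thm: I}, the stable splitting of Theorem~\ref{thm: III}, and the results of Cohen--Cohen--Mann--Milgram and Cohen--Shimamoto recalled in Theorem~\ref{thm: CCMM}. Two instances are immediate and I would dispose of them first: if $n=1$ then $\lfloor d/n\rfloor=d$ and the asserted equivalence is the identity $\Po^{d,m}_1=\Hol^*_d(S^2,\CP^{m-1})$, while if $m=1$ it is precisely Theorem~\ref{thm: GKY4}(i); so the essential case is $m,n\geq 2$, hence $mn\geq 4$. Throughout I fix the standard homotopy equivalence $\Omega^2_e\CP^{mn-1}\simeq\Omega^2S^{2mn-1}$ coming from the fibration $S^1\to S^{2mn-1}\to\CP^{mn-1}$ (so that $\Omega^2S^1\simeq *$), for every $e$. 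Since $mn\geq 3$ the space $\Omega^2S^{2mn-1}$ is simply connected; and because Theorem~\ref{thm: I} gives a homotopy equivalence through dimension $D(d;m,n)\geq 2mn-4\geq 2$, both $\Po^{d,m}_n$ and $\Hol^*_{\lfloor d/n\rfloor}(S^2,\CP^{mn-1})$ are simply connected too. Both are smooth complex varieties, hence homotopy equivalent to CW complexes, with --- by Theorem~\ref{thm: III} and by Theorem~\ref{thm: CCMM}(i) applied with $mn$ in place of $m$ and $\lfloor d/n\rfloor$ in place of $d$ --- the \emph{same} stable homotopy type $\bigvee_{k=1}^{\lfloor d/n\rfloor}\Sigma^{2(mn-2)k}D_k$, and therefore the same integral homology in every degree.

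The core is to identify $\Po^{d,m}_n$ with the $\lfloor d/n\rfloor$-th stage $J_2(S^{2mn-3})_{\lfloor d/n\rfloor}$ of the May--Milgram model for $\Omega^2S^{2mn-1}$; granting this, Theorem~\ref{thm: CCMM}(ii) applied with $mn\geq 3$ in place of $m$ identifies that stage with $\Hol^*_{\lfloor d/n\rfloor}(S^2,\CP^{mn-1})$ and we are done. To obtain it I would first show that $i^{d,m}_n$ --- equivalently, by Corollary~\ref{thm: II} and Theorem~\ref{thm: CCMM}, the jet map $j^{d,m}_n$ followed by the identification $\Hol^*_d(S^2,\CP^{mn-1})\simeq J_2(S^{2mn-3})_d$ --- factors, up to homotopy, through the subspace $J_2(S^{2mn-3})_{\lfloor d/n\rfloor}\subset\Omega^2S^{2mn-1}$, yielding a map $\phi\colon\Po^{d,m}_n\to J_2(S^{2mn-3})_{\lfloor d/n\rfloor}$. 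The reason this should hold is that both the cell structure of $\Po^{d,m}_n$ coming from the Vassiliev-type resolution of its discriminant $\{(f_1,\dots,f_m):\text{the }f_j^{(k)}\text{ have a common root}\}$ and the scanning map into the May--Milgram model are governed by the stratification of that discriminant by the number of common roots of the jet $\{f_j^{(k)}\}$, each of which absorbs multiplicity at least $n$ out of the total degree $d$, so that at most $\lfloor d/n\rfloor$ of them occur --- this is exactly the geometric origin of the index $\lfloor d/n\rfloor$ on the right-hand side. Once the factorization is in hand, the composite $\Po^{d,m}_n\xrightarrow{\phi}J_2(S^{2mn-3})_{\lfloor d/n\rfloor}\hookrightarrow\Omega^2S^{2mn-1}$ equals $i^{d,m}_n$, which is a homotopy equivalence through dimension $D(d;m,n)$ by Theorem~\ref{thm: I}; and the inclusion is one through the same dimension by Theorem~\ref{thm: KY6} (applied with $(mn,\lfloor d/n\rfloor)$) together with Theorem~\ref{thm: CCMM}(ii), so $\phi$ is a homotopy equivalence through dimension $D(d;m,n)$ as well.

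This range is not in itself enough, since both spaces have cells up to dimension roughly $(2mn-2)\lfloor d/n\rfloor>D(d;m,n)$; so the final step is to promote $\phi$ to a genuine homotopy equivalence by showing that it is a homology isomorphism in \emph{every} degree, after which simple connectivity of source and target forces it to be a homotopy equivalence (Whitehead). The homology statement should follow once one checks that $\phi$ is compatible with the stable splittings of Theorem~\ref{thm: III} and of Theorem~\ref{thm: CCMM}(i), each of which is obtained by restricting the Snaith splitting of $\Omega^2S^{2mn-1}$ to the appropriate May--Milgram stage; a map covering the inclusions of filtration stages then realizes the identity of $\bigvee_k\Sigma^{2(mn-2)k}D_k$ on homology. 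I expect the genuine obstacle to be the factorization step together with this compatibility check --- that is, proving that the degree filtration of $\coprod_{d\geq 0}\Po^{d,m}_n$, which carries a little $2$-cubes operad action defined (just as for $\SP^d_n(\C)$ and $\Hol^*_d(S^2,\CP^{mn-1})$) by supporting the polynomials on disjoint little squares so that their roots never collide, corresponds under the group-completion equivalence $\Omega B\bigl(\coprod_d\Po^{d,m}_n\bigr)\simeq\Z\times\Omega^2_0S^{2mn-1}$ to the word-length filtration of the Milgram model \emph{rescaled by the factor $n$} --- the rescaling being forced by $\Po^{d,m}_n\cong\C^{dm}$ being contractible for $d<n$ and by $\Po^{n,m}_n\cong\C^{mn}\setminus\{(x,\dots,x):x\in\C\}\simeq S^{2mn-3}$ from (\ref{eq: Poly}). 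This is precisely where the Vassiliev-type simplicial resolution of the discriminant employed in the proof of Theorem~\ref{thm: I} carries the weight, as that resolution is itself filtered by the number of bad common roots, which is at most $\lfloor d/n\rfloor$.
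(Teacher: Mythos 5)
Your overall target is the right one (identify the space with the $\lfloor d/n\rfloor$-th May--Milgram stage $J_2(S^{2mn-3})_{\lfloor d/n\rfloor}$ and invoke Theorem \ref{thm: CCMM}\ \II), but the two steps that carry all the weight are not actually carried out, and the first of them is attempted in the wrong direction. You propose to produce $\phi\colon \Po^{d,m}_n\to J_2(S^{2mn-3})_{\lfloor d/n\rfloor}$ by compressing $i^{d,m}_n$ (equivalently $j^{d,m}_n$ followed by $\Hol^*_d(S^2,\CP^{mn-1})\simeq J_2(S^{2mn-3})_d$) into the much smaller filtration stage $\lfloor d/n\rfloor$. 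No construction is given: the justification offered is a heuristic about the discriminant stratification, but the Vassiliev resolution computes $H_*(\Po^{d,m}_n)$ by Alexander duality from the compactified discriminant --- it does not give a cell structure on $\Po^{d,m}_n$ mapping compatibly into the May--Milgram filtration, and there is no obstruction-theoretic reason why a map into $J_2(S^{2mn-3})_d$ should deform into $J_2(S^{2mn-3})_{\lfloor d/n\rfloor}$. The paper sidesteps this entirely by building an explicit map the \emph{other} way: after reducing to $d$ a multiple of $n$ via Theorem \ref{crl: stabilization}, it uses the little-cubes structure maps $\mathcal{I}_d$ applied to the bottom piece $\Po^{n,m}_n\simeq S^{2mn-3}$ (Lemma \ref{lmm: Pon}) to define $\epsilon_d\colon J_2(S^{2mn-3})_d\to\Po^{dn,m}_n$ (Theorem \ref{thm: VII}); that map exists by construction, with no compression needed.

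The second gap is the promotion from ``equivalence through dimension $D(d;m,n)$'' to ``homology isomorphism in all degrees.'' You correctly note that the range is insufficient and that abstract agreement of stable homotopy types (Theorem \ref{thm: III} versus Theorem \ref{thm: CCMM}\ \I) gives only an abstract isomorphism of homology groups, not that \emph{your} map induces one; but the ``compatibility with the stable splittings'' you then appeal to is precisely the hard content of the paper, not a routine check. In the paper this is Theorem \ref{thm: V} (that $\Psi_d$ is a stable equivalence on filtration quotients), which rests on Lemma \ref{lmm: X}: the injectivity statements there are proved in \S\ref{section: transfer} using transfer maps on infinite symmetric products, Dold's lemma, and the Cohen--Cohen--Mann--Milgram fact that $\sigma_{d-1}^{(mn-1)}\circ e_d$ is null-homotopic. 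Without an argument of this kind you cannot conclude that your $\phi$ (even granting its existence) hits each Snaith summand isomorphically, so the Whitehead step at the end is not yet available. In short: the skeleton of your reduction matches the paper, but both load-bearing lemmas --- the construction of a comparison map between $\Po^{d,m}_n$ and the finite May--Milgram stage, and the all-degrees homology isomorphism via the splittings --- are asserted rather than proved, and the first is set up in a direction that makes it genuinely harder than the paper's construction.
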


\begin{remark}\label{rmk: Theorem IV}
It is easy to see that the above homotopy equivalence given in Theorem \ref{thm: IV} can also be expressed 
in the form (\ref{equ: conj1}).
\qed
\end{remark}


This paper is organized as follows.
In \S \ref{section: simplicial resolution} we recall the simplicial resolutions,
and
in \S \ref{section: spectral sequence} we shall construct the Vassiliev type spectral sequences converging to the homologies of
the spaces $\Po^{d,m}_n$ and $\Hol_d^*(S^2,\CP^{mn-1})$.
In \S \ref{section: sd}, we consider the stabilization map
$s^{d,m}_n:\Po^{d,m}_n\to \Po^{d+1,m}_n$ and
prove the homological stability of the map $s^{d,m}_n$
(Theorem \ref{thm: stab1}).
In \S \ref{section: scanning maps} we prove Theorem
\ref{thm: natural map} by using the scanning maps
and we give the proofs of the main 
 results of this
paper (Theorem \ref{thm: I} and Corollary \ref{thm: II}).
In \S \ref{section: III} we prove Theorem
\ref{thm: III} and Theorem \ref{thm: IV} by using Theorem \ref{thm: V}.
 In \S \ref{section: Proof V} we give the proof of Theorem
\ref{thm: V} by using Lemma \ref{lmm: X}, and
in \S \ref{section: transfer} we prove Lemma \ref{lmm: X}.


\section{Simplicial resolutions}\label{section: simplicial resolution}

In this section, we give the definitions of and summarize the basic facts about  non-degenerate simplicial resolutions  
(\cite{Va}, \cite{Va2}, (cf.  \cite{Mo2})).
\begin{definition}\label{def: def}
{\rm
(i) For a finite set $\textbf{\textit{v}} =\{v_1,\cdots ,v_l\}\subset \R^N$,
let $\sigma (\textbf{\textit{v}})$ denote the convex hull spanned by 
$\textbf{\textit{v}}.$
Let $h:X\to Y$ be a surjective map such that
$h^{-1}(y)$ is a finite set for any $y\in Y$, and let
$i:X\to \R^N$ be an embedding.
Let  $\mathcal{X}^{\Delta}$  and $h^{\Delta}:{\mathcal{X}}^{\Delta}\to Y$ 
denote the space and the map
defined by
\begin{equation}
\mathcal{X}^{\Delta}=
\big\{(y,u)\in Y\times \R^N:
u\in \sigma (i(h^{-1}(y)))
\big\}\subset Y\times \R^N,
\ h^{\Delta}(y,u)=y.
\end{equation}
The pair $(\mathcal{X}^{\Delta},h^{\Delta})$ is called
{\it the simplicial resolution of }$(h,i)$.
In particular, $(\mathcal{X}^{\Delta},h^{\Delta})$
is called {\it a non-degenerate simplicial resolution} if for each $y\in Y$
any $k$ points of $i(h^{-1}(y))$ span $(k-1)$-dimensional simplex of $\R^N$.
\par
(ii)
For each $k\geq 0$, let $\mathcal{X}^{\Delta}_k\subset \mathcal{X}^{\Delta}$ be the subspace
given by 
\begin{equation}
\mathcal{X}_k^{\Delta}=\big\{(y,u)\in \mathcal{X}^{\Delta}:
u \in\sigma (\textbf{\textit{v}}),
\textbf{\textit{v}}=\{v_1,\cdots ,v_l\}\subset i(h^{-1}(y)),\ l\leq k\big\}.
\end{equation}
We make identification $X=\mathcal{X}^{\Delta}_1$ by identifying 
 $x\in X$ with 
$(h(x),i(x))\in \mathcal{X}^{\Delta}_1$,
and we note that  there is an increasing filtration
\begin{equation*}\label{equ: filtration}
\emptyset =
\mathcal{X}^{\Delta}_0\subset X=\mathcal{X}^{\Delta}_1\subset \mathcal{X}^{\Delta}_2\subset
\cdots \subset \mathcal{X}^{\Delta}_k\subset \mathcal{X}^{\Delta}_{k+1}\subset
\cdots \subset \bigcup_{k= 0}^{\infty}\mathcal{X}^{\Delta}_k=\mathcal{X}^{\Delta}.
\end{equation*}
}
\end{definition}
Since the map $h^{\Delta}$ is a proper map,
it extends the map
$h^{\Delta}_+:\mathcal{X}^{\Delta}_+\to Y_+$
between one-point compactifications, where
$X_+$ denotes the one-point compactification of a locally compact space
$X$.

\begin{theorem}[\cite{Va}, \cite{Va2} 
(cf. \cite{KY7}, \cite{Mo2})]\label{thm: simp}
Let $h:X\to Y$ be a surjective map such that
$h^{-1}(y)$ is a finite set for any $y\in Y,$ 
$i:X\to \R^N$ an embedding, and let
$(\mathcal{X}^{\Delta},h^{\Delta})$ denote the simplicial resolution of $(h,i)$.
\par
\begin{enumerate}
\item[$\I$]
If $X$ and $Y$ are semi-algebraic spaces and the
two maps $h$, $i$ are semi-algebraic maps, then
$h^{\Delta}_+:\mathcal{X}^{\Delta}_+\stackrel{\simeq}{\rightarrow}Y_+$
is a homology equivalence.\footnote{%
It is known that the map $h^{\Delta}_+$ is a homotopy equivalence 
\cite[page 156]{Va2}.
(cf. \cite[Theorem in page 43]{GM}).
But in this paper we do not need such a stronger assertion.
}
Moreover,
there is an embedding $j:X\to \R^M$ such that
the associated simplicial resolution
$(\tilde{\mathcal{X}}^{\Delta},\tilde{h}^{\Delta})$ of $(h,j)$
is non-degenerate.
\par
\item[$\II$]
If there is an embedding $j:X\to \R^M$ such that its associated simplicial resolution
$(\tilde{\mathcal{X}}^{\Delta},\tilde{h}^{\Delta})$
is non-degenerate,
the space $\tilde{\mathcal{X}}^{\Delta}$
is uniquely determined up to homeomorphism and
there is a filtration preserving homotopy equivalence
$q^{\Delta}:\tilde{\mathcal{X}}^{\Delta}\stackrel{\simeq}{\rightarrow}{\mathcal{X}}^{\Delta}$ such that $q^{\Delta}\vert X=\mbox{id}_X$.
\qed
\end{enumerate}
\end{theorem}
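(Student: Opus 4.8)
\textbf{Proof proposal for Theorem \ref{thm: simp}.}

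The plan is to treat the two parts separately, following Vassiliev's original construction, and to reduce everything to standard facts about proper semi-algebraic maps and convex geometry. For part $\I$, the key observation is that $\mathcal{X}^{\Delta}\to Y$ is, fiberwise, the inclusion of a point-set $i(h^{-1}(y))$ into the convex hull it spans; so the one-point compactification $h^{\Delta}_+$ is obtained by a fiberwise cone-type operation, and fibers are contractible (a nonempty convex set). First I would verify that $\mathcal{X}^{\Delta}$ is a semi-algebraic set: it is cut out inside $Y\times\R^N$ by the semi-algebraic condition ``$u$ lies in the convex hull of the finite semi-algebraic family $i(h^{-1}(y))$'', which can be written with quantifiers over barycentric coefficients and then quantifier-eliminated. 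Since $h$ has finite fibers and is proper (finite fibers of a semi-algebraic map over a semi-algebraic base, plus properness of the convex-hull construction over compact pieces), $h^{\Delta}$ is proper, so it extends to $h^{\Delta}_+:\mathcal{X}^{\Delta}_+\to Y_+$. To show this is a homology equivalence I would invoke the semi-algebraic (or subanalytic) version of the Vietoris--Begle / Leray theorem: a proper surjection of (locally compact) semi-algebraic spaces all of whose fibers are $\Z$-acyclic induces an isomorphism on Borel--Moore homology, equivalently on the reduced homology of one-point compactifications. The fibers here are convex, hence contractible, hence acyclic, which gives the claim. For the second assertion of $\I$, I would produce the required embedding $j:X\to\R^M$ by composing the given $i:X\to\R^N$ with a generic algebraic ``moment-type'' map $\R^N\to\R^M$ (e.g. a Veronese-style map of sufficiently high degree, or a generic linear projection into large $M$) chosen so that no $M+1$ or fewer points of the image become affinely dependent unless forced; a standard general-position / transversality argument over the parameter space of such maps shows that for a generic choice, any $k$ points of $j(h^{-1}(y))$ are affinely independent for all $y$ and all $k\le M+1$, which is exactly non-degeneracy.

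For part $\II$, I would argue that non-degeneracy pins down the homeomorphism type of $\tilde{\mathcal{X}}^{\Delta}$ purely combinatorially. Given the finite fiber $h^{-1}(y)$ of cardinality $r(y)$, the fiber of $\tilde{h}^{\Delta}$ over $y$ is, by the non-degeneracy hypothesis, an $(r(y)-1)$-simplex (an honest geometric simplex on $r(y)$ vertices), independent of the chosen embedding; and the way these simplices are glued along the filtration $\tilde{\mathcal{X}}^{\Delta}_k$ depends only on the map $h$ (on which subsets of fibers are identified as $y$ varies), not on $j$. So two non-degenerate resolutions of $(h,j)$ and $(h,j')$ are canonically identified over each $y$ by the affine-linear map sending vertices to vertices, and these identifications patch to a filtration-preserving homeomorphism restricting to $\mathrm{id}_X$ on $X=\tilde{\mathcal{X}}^{\Delta}_1$. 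Finally, to compare with the possibly-degenerate $(\mathcal{X}^{\Delta},h^{\Delta})$ coming from $i$, I would use that the straight-line homotopy between $i$ and $j$ (or rather a path in the space of embeddings, using that $\R^N\hookrightarrow\R^{N+M}$) produces a one-parameter family of simplicial resolutions; collapsing each fiber-simplex of $\tilde{\mathcal{X}}^{\Delta}$ onto the convex hull in the $i$-coordinates gives the map $q^{\Delta}$, which is fiberwise a surjection of a simplex onto a convex polytope with contractible point-preimages, hence a homotopy equivalence on each fiber, and a global homotopy equivalence by the gluing lemma applied along the filtration. One must check $q^{\Delta}$ is filtration-preserving: a point of $\tilde{\mathcal{X}}^{\Delta}_k$ lies in the simplex spanned by $\le k$ vertices, whose images under the collapse span a $\le (k-1)$-dimensional (possibly lower) face of $\sigma(i(h^{-1}(y)))$, so it lands in $\mathcal{X}^{\Delta}_k$; and $q^{\Delta}|X=\mathrm{id}$ is immediate since on $\mathcal{X}^{\Delta}_1$ both resolutions are just $X$.

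The main obstacle, and the step deserving the most care, is the genericity argument in part $\I$ producing a non-degenerate embedding: one needs a clean way to say that ``generic high-dimensional (algebraic) perturbation of $i$ makes all fibers in general affine position simultaneously over all of $Y$,'' uniformly in $y$, despite $Y$ being noncompact and the fiber cardinality $r(y)$ being unbounded a priori. The way around this is to bound $r(y)$ (in the applications $Y$ is a space of polynomials and $h$ records roots, so fibers have cardinality $\le d$), stratify $Y$ by fiber cardinality into finitely many semi-algebraic pieces, and on each piece use that the ``bad'' locus in the parameter space of degree-$e$ Veronese maps (for $e$ large relative to the bound) is a proper algebraic subvariety; taking $M$ large and $e$ large makes the union of finitely many bad loci proper, so a generic choice works. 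The second subtlety is the correct homological input: one should cite the semi-algebraic Vietoris--Begle theorem in the Borel--Moore setting rather than attempt a CW-level argument, since $\mathcal{X}^{\Delta}$ need not be a CW complex in the degenerate case; this is exactly the point where we only get a homology equivalence (the stronger homotopy statement, needing the non-degenerate model, is the footnoted remark we deliberately do not prove here). The remaining steps—semi-algebraicity of $\mathcal{X}^{\Delta}$, properness of $h^{\Delta}$, and the fiberwise gluing lemmas—are routine.
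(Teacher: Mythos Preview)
The paper does not prove this theorem at all: note the \verb|\qed| immediately after the statement and the attribution to \cite{Va}, \cite{Va2} (cf.\ \cite{KY7}, \cite{Mo2}). It is quoted as background, so there is no ``paper's own proof'' to compare against; your proposal is therefore not a reconstruction but an attempt to supply what the references contain.

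That said, your sketch is essentially the standard Vassiliev argument and is broadly correct. Two remarks. First, in part~$\I$ you worry that the fiber cardinality $r(y)$ may be unbounded and propose to handle this ``in the applications'' by an explicit bound coming from polynomial degree. In fact no such appeal is needed: a semi-algebraic map with finite fibers automatically has uniformly bounded fiber cardinality (this follows from Hardt's theorem, or more elementarily from cell decomposition / o-minimality), so the stratification by $r(y)$ into finitely many semi-algebraic pieces is always available under the stated hypotheses. Second, the explicit non-degenerate embedding can be produced without any genericity argument at all: once $r(y)\le r_0$ for all $y$, compose $i$ with the moment-curve map $v\mapsto (v,v^{\otimes 2},\dots,v^{\otimes r_0})$ (or the Veronese of degree $r_0$); then any $r_0$ distinct points of the image are automatically affinely independent by the Vandermonde determinant, with no transversality needed. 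This is the construction the cited sources actually use, and it removes the ``main obstacle'' you flag.
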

\section{The Vassiliev spectral sequences}
\label{section: spectral sequence}

Our goal in this section is to construct, by means of the
{\it non-degenerate} simplicial resolutions  of the discriminants, two 
Vassiliev type spectral sequences converging to the homology of
$\Po^{d,m}_n$ and that of $\Hol_d^*(S^2,\CP^{mn-1})$,
respectively.

\begin{definition}\label{Def: 3.1}
{\rm
\par
(i)
Let 
$\Sigma^{d,m}_n$ denote \emph{the discriminant} of
$\Po^{d,m}_n$ in $\P^d(\C)^m$ given by
the complement
\begin{eqnarray*}
\Sigma^{d,m}_n &=& 
\P^d(\C)^m\setminus \Po^{d,m}_n
\\
&=&
\{(f_1,\cdots ,f_{m})\in \P^d(\C)^m :
\textbf{\textit{f}}_1(x)=\cdots
=\textbf{\textit{f}}_m(x)=\mathbf{0}
\mbox{ for some }x\in \C\}.
\end{eqnarray*}
Let us write $\P^d (m,n)=(\P^d(\C)^n)^m$, and
let $\tilde{\Sigma}^d$
be \emph{the discriminant} of $\Hol_d^*(S^2,\CP^{mn-1})$ in
$\P^d (m,n)$
given by
\begin{eqnarray*}
\tilde{\Sigma}^d
&=&
\P^d (m,n)\setminus \Hol_d^*(S^2,\CP^{mn-1})
\\
&=&
\{(f_1,\cdots ,f_{mn})\in \P^d (m,n) :
f_1(x)=\cdots =f_{mn}(x)=0
\mbox{ for some }x\in \C\}.
\end{eqnarray*}
\par
(ii)
Let
$Z^{d,m}_n\subset \Sigma^{d,m}_n\times\C$ denote 
{\it the tautological normalization} of $\Sigma^{d,m}_n$ 
given by
$$
Z^{d,m}_n=\{
((f_1(z),\cdots ,f_m(z),x)\in \Sigma^{d,m}_n\times\C:
\textit{\textbf{f}}_1(x)=\cdots =\textit{\textbf{f}}_m(x)=\mathbf{0}\}.
$$
Similarly,
let  $\tilde{Z}^d_N\subset \tilde{\Sigma}^d\times \C$
be 
{\it the tautological normalization} of 
 $\tilde{\Sigma}^d$ given by
$$
\tilde{Z}^d=\{((f_1,\ldots ,f_{mn}),x)\in \tilde{\Sigma}^d\times\C
:f_1(x)=\cdots =f_{mn}(x)=0\}.
$$
Projection on the first factor  gives the surjective maps
$\pi^{d,m}_n:Z^{d,m}_n\to \Sigma^{d,m}_n$
and
$\tilde{\pi}^{d}:\tilde{Z}^{d}\to \tilde{\Sigma}^{d}$,
respectively.
}
\end{definition}

\begin{definition}\label{non-degenerate simp.}
(i)
Let $\varphi :\P^d(m,n)\stackrel{\cong}{\rightarrow}
\C^{dmn}$ be any fixed homeomorphism and define the embedding
$j_d:\tilde{Z}^d\to \C^{dmn+2d+1}$ by
\begin{equation}\label{3.1}
j_d((f_1,\cdots ,f_{mn}),x)=(\varphi (f_1,\cdots ,f_{mn}),
1,x,x^2,\cdots ,x^{2d})
\end{equation}
for $((f_1,\cdots ,f_{mn}),x)\in \tilde{Z}^d$.
Similarly, define the embedding
$\tilde{i}:Z^{d,m}_n\to \C^{dmn+2d+1}$ by 
\begin{equation}\label{3.2}
\tilde{i}((f_1,\cdots ,f_m),x)=j((\textit{\textbf{f}}_1(z),\cdots ,\textit{\textbf{f}}_m(z),x)
\end{equation}
for $((f_1,\cdots ,f_m),x)\in Z^{d,m}_n$,
where
where $\textit{\textbf{f}}_k(z)$ denotes the $n$-tuple of  polynomials 
defined  in (\ref{equ: bff}).
Note that it is easy to see that the following holds:
\begin{equation}\label{equ: emb-discriminant}
j=\tilde{i}\circ (\hat{j}^{d,m}_n\times \mbox{id}_{\C})
\end{equation}
where
$\hat{j}^{d,m}_n:\Sigma^{d,m}_n\to \tilde{\Sigma}^d$ denote the embedding defined by
\begin{equation}\label{equ: 3.4}
\tilde{j}^{d,m}_n(f_1(z),\cdots ,f_m(z))=
\big(\textit{\textbf{f}}_1(z),\cdots ,\textit{\textbf{f}}_m(z)\big)
\end{equation}
for $(f_1(z),\cdots ,f_m(z))\in\Sigma^{d,m}_n$.
\par (ii)
Let 
$(\SZ,{\pi}^{\Delta}:\SZ\to\Sigma^{d,m}_n)$ 
and
$(\tilde{\mathcal{X}}^{d},\tilde{\pi}^{\Delta}:\tilde{\mathcal{X}}^{d}\to
\tilde{\Sigma}^d)$ 
be the  simplicial resolutions of $(\pi^{d,m}_n,i)$ and
 $(\tilde{\pi},j)$,
respectively.
Then it is easy to see that
$\mathcal{X}^d$ and $\tilde{\mathcal{X}}^d$ are no-degenerate simplicial
resolutions, and that
there are two
natural increasing filtrations
\begin{eqnarray*}
\emptyset 
&=&
\SZ_0
\subset \SZ_1\subset 
\SZ_2\subset \cdots \cdots\subset
\bigcup_{k= 0}^{\infty}\SZ_k=\SZ,
\\
\emptyset 
&=&
\tilde{\mathcal{X}}^{d}_0
\subset \tilde{\mathcal{X}}^{d}_1\subset 
\tilde{\mathcal{X}}^{d}_2\subset \cdots\cdots
\subset
\bigcup_{k= 0}^{\infty}\tilde{\mathcal{X}}^{d}_k
=\tilde{\mathcal{X}}^{d},
\end{eqnarray*}
such that
\begin{equation}\label{filt}
\mathcal{X}^d_k=\mathcal{X}^d \quad\mbox{ if }k>
\Big\lfloor \frac{d}{n}\Big\rfloor
\quad \mbox{and}\quad
\tilde{\mathcal{X}}^d_k=\tilde{\mathcal{X}}^d\quad \mbox{ if }k>d.
\end{equation}
\end{definition}


By Theorem \ref{thm: simp},
the map
$\pi_{+}^{\Delta}:\SZ_+\stackrel{\simeq}{\rightarrow}{\Sigma^{d,m}_{n+}}$
is a homology equivalence.
Since
${\mathcal{X}_k^{d}}_+/{\SZ_{k-1}}_+
\cong (\SZ_k\setminus \SZ_{k-1})_+$,
we have a spectral sequence 
$$
\big\{E_{t;d}^{k,s},
d_t:E_{t;d}^{k,s}\to E_{t;d}^{k+t,s+1-t}
\big\}
\Rightarrow
H^{k+s}_c(\Sigma_n^{d,m},\Z),
$$
where
$E_{1;d}^{k,s}=\tilde{H}^{k+s}_c(\SZ_k\setminus\SZ_{k-1},\Z)$ and
$H_c^k(X,\Z)$ denotes the cohomology group with compact supports given by 
$
H_c^k(X,\Z)= H^k(X_+,\Z).
$
\par
Since there is a homeomorphism
$\P^d(\C)^m\cong \C^{dm}$,
by Alexander duality  there is a natural
isomorphism
\begin{equation}\label{Al}
\tilde{H}_k(\Po^{d,m}_n,\Z)\cong
\tilde{H}_c^{2md-k-1}(\Sigma_n^{d,m},\Z)
\quad
\mbox{for any }k.
\end{equation}
By
reindexing we obtain a
spectral sequence
\begin{eqnarray}\label{SS}
&&\big\{E^{t;d}_{k,s}, d^{t}:E^{t;d}_{k,s}\to E^{t;d}_{k+t,s+t-1}
\big\}
\Rightarrow H_{s-k}(\Po^{d,m}_n,\Z),
\end{eqnarray}
where
$E^{1;d}_{k,s}=
\tilde{H}^{2md+k-s-1}_c(\SZ_k\setminus\SZ_{k-1},\Z).$
\par\vspace{3mm}\par
By a complete similar method we also have the spectral sequence
\begin{eqnarray}\label{SSSS}
&&\big\{\tilde{E}^{t;d}_{k,s}, \tilde{d}^{t}:
\tilde{E}^{t}_{k,s}\to \tilde{E}^{t}_{k+t,s+t-1}
\big\}
\Rightarrow H_{s-k}(\Hol_d^*(S^2,\CP^{mn-1}),\Z),
\end{eqnarray}
where
$\tilde{E}^{1}_{k,s}=
\tilde{H}^{2dmn+k-s-1}_c(\tilde{\mathcal{X}}^{d}_k\setminus
\tilde{\mathcal{X}}_{k-1}^{d},\Z).$
\par\vspace{2mm}\par

For a space $X$, let $F(X,k)\subset X^k$ denote the ordered
configuration space given by
$$
F(X,k)=\{(x_1,\cdots ,x_k)\in X^k:
x_i\not= x_j\mbox{ if }i\not= j\}.
$$
Let $S_k$ be the symmetric group of $k$ letters.
Then the group $S_k$ acts on $F(X,k)$ by permuting coordinates
and let $C_k(X)$ denote the orbit space
$C_k(X)=F(X,k)/S_k$.
Let $X^{\wedge k}$ denote the $k$-fold smash product of a space $X$,
i.e.
$X^{\wedge k}=X\wedge  \cdots \wedge X$
$(k$-times).
Then $S_k$ acts on $X^{\wedge k}$ by the coordinate permutation
and define  $D_k(X)$ by the equivariant half smash product
\begin{equation}\label{equ: DkX}
D_k(X)=F(\C,k)_+\wedge_{S_k}X^{\wedge k}.
\end{equation}
In particular, for $X=S^1$ we set
\begin{equation}\label{equ: DkS}
D_k=D_k(S^1).
\end{equation}

\begin{lemma}\label{lemma: vector bundle}
If  
$1\leq k\leq \lfloor \frac{d}{n}\rfloor$,
$\SZ_k\setminus\SZ_{k-1}$
is homeomorphic to the total space of a real affine
bundle $\xi_{d,k}$ over $C_k(\C)$ with rank 
$l_{d,k}=2m(d-nk)+k-1$.
\end{lemma}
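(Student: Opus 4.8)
The plan is to describe $\SZ_k\setminus\SZ_{k-1}$ explicitly as a union of open simplices fibred over the configuration space $C_k(\C)$, and then identify the fibre coordinates with an affine subspace of $\P^d(\C)^m$. Recall that by the non-degeneracy of the simplicial resolution, a point of $\SZ_k\setminus\SZ_{k-1}$ is a pair $\bigl((f_1,\dots,f_m),u\bigr)$ where $(f_1,\dots,f_m)\in\Sigma^{d,m}_n$ has \emph{exactly} $k$ distinct common roots $x_1,\dots,x_k\in\C$ of the jet system (i.e. $\textit{\textbf{f}}_1(x_i)=\cdots=\textit{\textbf{f}}_m(x_i)=\mathbf 0$ for each $i$) and $u$ lies in the interior of the $(k-1)$-simplex spanned by $\tilde i$ of these $k$ preimage points. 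First I would send such a point to the unordered tuple $\{x_1,\dots,x_k\}\in C_k(\C)$; this is well defined and continuous, and it is the map whose fibres we must understand.

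Next I would analyze the fibre over a fixed configuration $\{x_1,\dots,x_k\}$. The barycentric-coordinate part of the fibre is an open $(k-1)$-simplex, contributing $k-1$ to the rank. For the polynomial part: having a common root of multiplicity $\ge n$ at a prescribed point $x_i$ for the system means, by the remark in Definition \ref{dfn: Polydmn}(ii), that each $f_j$ is divisible by $(z-x_i)^n$. So the polynomial tuples in the fibre are exactly those $(f_1,\dots,f_m)\in\P^d(\C)^m$ with each $f_j$ divisible by $\prod_{i=1}^k (z-x_i)^n$. Writing $f_j(z)=\bigl(\prod_{i=1}^k(z-x_i)^n\bigr)\cdot g_j(z)$ with $g_j$ monic of degree $d-nk$, the space of such $(g_1,\dots,g_m)$ is an affine space $\C^{m(d-nk)}$, i.e. of real dimension $2m(d-nk)$. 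Adding the $k-1$ from the simplex gives the asserted rank $l_{d,k}=2m(d-nk)+k-1$. One must also check $g_j$ really can be chosen freely — that no further incidences are forced — which holds because the $k$ roots are distinct and generic monic $g_j$'s of positive degree impose nothing; the edge case $d=nk$ (so $g_j\equiv 1$, fibre is just the open simplex) is consistent with the formula.

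Finally I would assemble these fibrewise descriptions into a bundle over $C_k(\C)$. Over the ordered configuration space $F(\C,k)$ one gets an honest product-like trivialization of the polynomial coordinates (the coefficients of $g_j$ and the roots $x_i$ vary algebraically and independently), together with the standard bundle of open $(k-1)$-simplices; descending along the free $S_k$-action yields a real affine bundle $\xi_{d,k}$ over $C_k(\C)=F(\C,k)/S_k$ of the stated rank. The identification is a homeomorphism because the coefficients of $f_j$ depend polynomially on $(x_i)$ and on the coefficients of $g_j$, and conversely — on each fibre, and hence globally — these can be recovered continuously (Vieta-type formulas, plus the fact that the leading coefficient normalization pins down the factorization). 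I expect the main technical obstacle to be the bookkeeping that makes "divisible by $\prod(z-x_i)^n$" into a genuine \emph{affine bundle chart}: one needs the change of coordinates between the $f_j$-coefficients and the pair $\bigl((x_i)_i,(g_j)_j\bigr)$ to be not merely bijective on fibres but a fibrewise-affine homeomorphism over $C_k(\C)$, which is where the argument of \cite{KY7} and \cite{GKY2} is invoked; the dimension count itself is routine.
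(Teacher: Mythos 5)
Your proposal is correct and follows the same overall strategy as the paper's proof: project $\SZ_k\setminus\SZ_{k-1}$ to $C_k(\C)$ via the (uniquely determined) $k$ distinct points, identify the fibre over $\{x_1,\dots,x_k\}$ as the product of an open $(k-1)$-simplex with the affine space of $m$-tuples of monic polynomials divisible by $\prod_{i=1}^k(z-x_i)^n$, and check local triviality. The one place where you genuinely diverge is the computation of the fibre dimension $2m(d-nk)$: the paper rewrites divisibility as the $mnk$ linear conditions $f_i^{(t)}(x_j)=0$ ($0\le t<n$, $1\le j\le k$, $1\le i\le m$) and uses Vandermonde-type matrices to see that these are affinely independent, so the fibre is an intersection of hyperplanes in general position of codimension $mnk$ in $\P^d(\C)^m$; you instead parametrize the fibre by the monic quotients $g_j$ of degree $d-nk$. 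Both are valid — your factorization is the more economical dimension count (and still exhibits the fibre as an affine subspace, since multiplication by a fixed monic polynomial is an injective affine map on coefficients), while the paper's version is the one that transparently yields the affine-bundle charts over $C_k(\C)$. One small imprecision to fix: a point of $\SZ_k\setminus\SZ_{k-1}$ need not come from a tuple with \emph{exactly} $k$ common roots of multiplicity $\ge n$; it suffices that $u$ lie in the interior of a face spanned by $k$ of the preimage points. Since your actual fibre description (all tuples divisible by $\prod_{i=1}^k(z-x_i)^n$, with no constraint excluding further common roots) already accounts for this, the argument is unaffected.
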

\begin{proof}

The argument is exactly analogous to the one in the proof of  
\cite[Lemma 4.4]{AKY1}. 
Namely, an element of $\SZ_k\setminus\SZ_{k-1}$ is represented by
the $(m+1)$-tuple 
$(f_1(z),\cdots ,f_m(z),u)$, where 
$(f_1(z),\cdots ,f_m(z))$ is an $m$-tuple of monic polynomials of the same
degree $d$
in $\Sigma^{d,m}_n$ and $u$ is an element of the interior of
the span of the images of $k$ distinct points 
$\{x_1,\cdots, x_k\}\in C_k(\C)$ 
such that
$\{x_j\}_{j=1}^k$ are common roots of 
$\{f_i(z)\}_{k=1}^m$ of multiplicity $n$
under a suitable embedding.
\ 
Since the $k$ distinct points $\{x_j\}_{j=1}^k$ 
are uniquely determined by $u$, by the definition of the non-degenerate simplicial resolution,
there are projection maps
$\pi_{k,d} :{\cal X}^{d}_k\setminus
{\cal X}^{d}_{k-1}\to C_{k}(\C)$
defined by
$((f_1,\cdots ,f_m),u) \mapsto 
\{x_1,\cdots ,x_k\}$. 

\par
Now suppose that $1\leq k\leq \lfloor \frac{d}{n}\rfloor$
and $1\leq i\leq m$.
Let $c=\{x_j\}_{j=1}^k\in C_{k}(\C)$
 be any fixed element and consider the fibre  $\pi_{k,d}^{-1}(c)$.
If we
consider the condition
that a polynomial $f_i(z)\in\P^d(\C)$
is divided by
the polynomial
$\prod_{j=1}^k(z-x_j)^n$,
then it is easy to see that this condition is equivalent
to the following condition:
\begin{equation}\label{equ: equation}
f^{(t)}_i(x_j)=0
\quad
\mbox{for }0\leq t<n,\ 1\leq j\leq k.
\end{equation}
In general, for each $0\leq t< n$ and $1\leq j\leq k$,
the condition $f^{(t)}_i(x_j)=0$ 
gives
one  linear condition on the coefficients of $f_i(z)$,
and it determines an affine hyperplane in $\P^d(\C)$. 
For example, if we set $f_i(z)=z^d+\sum_{s=1}^da_{s}z^{d-s}$,
then
$f_i(x_j)=0$ for all $1\leq j\leq k$
if and only if
\begin{equation*}\label{equ: matrix equation}
\begin{bmatrix}
1 & x_1 & x_1^2 & x_1^3 & \cdots & x_1^{d-1}
\\
1 & x_2 & x_2^2 & x_2^3 & \cdots & x_2^{d-1}
\\
\vdots & \ddots & \ddots & \ddots & \ddots & \vdots
\\
1 & x_k & x_k^2 & x_k^3 & \cdots & x_k^{d-1}
\end{bmatrix}
\cdot
\begin{bmatrix}
a_{d}\\ a_{d-1} \\ \vdots 
\\ a_{1}
\end{bmatrix}
=
-
\begin{bmatrix}
x_1^d\\ x_2^d \\ \vdots 
\\ x_k^d
\end{bmatrix}
\end{equation*}
Similarly, $f^{\p}_i(x_j)=0$ for all $1\leq j\leq k$
if and only if
\begin{equation*}\label{equ: matrix equation2}
\begin{bmatrix}
0 &1 & 2x_1 & 3x_1^2 & \cdots & (d-1)x_1^{d-2}
\\
0 & 1 & 2x_2 & 3x_2^2 & \cdots & (d-1)x_2^{d-2}
\\
\vdots & \vdots & \ddots & \ddots & \ddots & \vdots
\\
0 &1 & 2x_k & 3x_k^2 & \cdots & (d-1)x_k^{d-2}
\end{bmatrix}
\cdot
\begin{bmatrix}
a_d \\
 a_{d-1} \\ \vdots 
\\ a_{1}
\end{bmatrix}
=
-
\begin{bmatrix}
dx_1^{d-1}\\ dx_2^{d-1} \\ \vdots 
\\ dx_k^{d-1}
\end{bmatrix}
\end{equation*}
and
$f^{\p\p}_i(x_j)=0$ for all $1\leq j\leq k$
if and only if
\begin{equation*}\label{equ: matrix equation2}
\begin{bmatrix}
0 & 0 & 2 & 6x_1 & \cdots & (d-1)(d-2)x_1^{d-3}
\\
0 & 0 & 2 & 6x_2 & \cdots & (d-1)(d-2)x_2^{d-3}
\\
\vdots & \vdots & \ddots & \ddots & \ddots & \vdots
\\
0 & 0 & 2 & 6x_k & \cdots & (d-1)(d-2)x_k^{d-3}
\end{bmatrix}
\cdot
\begin{bmatrix}
a_d \\
 a_{d-1} \\ \vdots 
\\ a_{1}
\end{bmatrix}
=
-
\begin{bmatrix}
d(d-1)x_1^{d-2}\\ 
d(d-1)x_2^{d-1} 
\\ \vdots 
\\ d(d-1)x_k^{d-2}
\end{bmatrix}
\end{equation*}
and so on.
Since $1\leq k\leq \lfloor \frac{d}{n}\rfloor$ and
 $\{x_j\}_{j=1}^k\in C_k(\C)$, it follows from
the properties of Vandermonde matrices 
that the the condition  
(\ref{equ: equation}) 
gives exactly $nk$ affinely independent conditions on the coefficients of $f_i(z)$.
Hence,
we see that 
the space of $m$-tuples $(f_1(z),\cdots ,f_m(z))\in\P^d(\C)^m$ 
of monic polynomials which satisfies
the condition (\ref{equ: equation}) for each $1\leq i\leq m$
is the intersection of $mnk$ affine hyperplanes in general position, and
it has codimension $mnk$ in $\P^d(\C)^m$.
Therefore,
the fibre $\pi_{k,d}^{-1}(c)$ is homeomorphic  to the product of an open $(k-1)$-simplex
 with the real affine space of dimension
 $2m(d-nk)$. 
 Because we can also check that the local triviality holds,
 we see that
$\SZ_k\setminus\SZ_{k-1}$ is a real affine bundle over $C_{k}(\C)$ of rank $l_{d,k}
=2m(d-nk)+k-1$.
\end{proof}

\par\vspace{2mm}\par

By using a complete similar method of Lemma
\ref{lemma: vector bundle} we can also prove 
the following  result.
\begin{lemma}\label{lemma: vector bundle*}
If  
$1\leq k\leq  d$,
$\tilde{\mathcal{X}}^{d}_k\setminus
\tilde{\mathcal{X}}^{d}_{k-1}$
is homeomorphic to the total space of a real affine
bundle $\tilde{\xi}_{d,k}$ over $C_k(\C)$ with rank 
$\tilde{l}_{d,k}=2mn(d-k)+k-1$.
\qed
\end{lemma}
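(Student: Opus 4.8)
The final statement to prove is Lemma~\ref{lemma: vector bundle*}, which asserts that for $1\le k\le d$ the stratum $\tilde{\mathcal{X}}^d_k\setminus\tilde{\mathcal{X}}^d_{k-1}$ is the total space of a real affine bundle $\tilde\xi_{d,k}$ over $C_k(\C)$ of rank $\tilde l_{d,k}=2mn(d-k)+k-1$. Since the excerpt explicitly says this is proved "by a complete[ly] similar method" to Lemma~\ref{lemma: vector bundle}, let me sketch how that transfer goes.

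The plan is to repeat the proof of Lemma~\ref{lemma: vector bundle} verbatim, with the only change being that $m$-tuples of polynomials with common roots of multiplicity $\ge n$ are replaced by $mn$-tuples of polynomials with \emph{simple} common roots. First I would observe that any element of $\tilde{\mathcal{X}}^d_k\setminus\tilde{\mathcal{X}}^d_{k-1}$ is represented by a pair $((f_1(z),\dots,f_{mn}(z)),u)$, where $(f_1,\dots,f_{mn})\in\tilde\Sigma^d$ and $u$ lies in the interior of the simplex spanned by the images under the embedding $j_d$ of $k$ distinct points $\{x_1,\dots,x_k\}\in C_k(\C)$ that are common roots of $f_1,\dots,f_{mn}$. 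By non-degeneracy of $\tilde{\mathcal{X}}^d$ the unordered tuple $\{x_1,\dots,x_k\}$ is recovered from $u$, so there is a well-defined projection $\tilde\pi_{k,d}:\tilde{\mathcal{X}}^d_k\setminus\tilde{\mathcal{X}}^d_{k-1}\to C_k(\C)$, $((f_1,\dots,f_{mn}),u)\mapsto\{x_1,\dots,x_k\}$.

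Next I would analyze a single fibre $\tilde\pi_{k,d}^{-1}(c)$ over $c=\{x_j\}_{j=1}^k\in C_k(\C)$. Requiring $f_i(x_j)=0$ for all $1\le j\le k$ imposes, by the Vandermonde matrix being nonsingular on $k$ distinct points (here we use $k\le d$ so that the $k\times d$ coefficient matrix has full rank $k$), exactly $k$ affinely independent linear conditions on the coefficients of $f_i$; doing this for each of the $mn$ polynomials gives $mnk$ conditions in general position, cutting out an affine subspace of real codimension $2mnk$ in $\P^d(\C)^{mn}\cong\C^{dmn}$, hence of real dimension $2mn(d-k)$. The fibre is then the product of this affine space with the open $(k-1)$-simplex $\sigma(\textbf{\textit{v}})^\circ$, so it is an affine space of dimension $2mn(d-k)+k-1=\tilde l_{d,k}$. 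Local triviality over $C_k(\C)$ follows exactly as before — the conditions vary continuously (indeed algebraically) with $c$ and the simplices fit together into the standard bundle $F(\C,k)\times_{S_k}(\text{open }(k-1)\text{-simplex})$ — giving the desired real affine bundle structure.

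The only real point requiring care, and thus the main obstacle, is the full-rank assertion for the system of linear conditions: one must check that across \emph{all} $mn$ polynomials and all $k$ distinct evaluation points the $mnk$ linear functionals on the coefficient vectors are affinely independent. For $n=1$ this is just the statement that $mk$ copies of a rank-$k$ Vandermonde system are independent, which is immediate since the conditions on distinct polynomials $f_i$ involve disjoint sets of coefficient variables; the hypothesis $k\le d$ is exactly what guarantees each individual Vandermonde block has rank $k$ rather than fewer. (Note this differs from Lemma~\ref{lemma: vector bundle}, where derivative conditions up to order $n-1$ force the bound $k\le\lfloor d/n\rfloor$; here, with only order-zero conditions, the weaker bound $k\le d$ suffices.) Once this linear-algebra fact is in hand, everything else is formal, and the rank count $2mn(d-k)+(k-1)$ follows by substituting the parameters into the computation of Lemma~\ref{lemma: vector bundle}.
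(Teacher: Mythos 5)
Your proof is correct and follows exactly the route the paper intends: the paper gives no separate argument for this lemma, stating only that it follows "by a completely similar method" to Lemma~\ref{lemma: vector bundle}, and your reconstruction (projection to $C_k(\C)$ via non-degeneracy, Vandermonde full-rank giving $mnk$ independent complex conditions of real codimension $2mnk$, fibre equal to an affine space of dimension $2mn(d-k)$ times an open $(k-1)$-simplex) is precisely that method specialized to order-zero vanishing conditions on $mn$ polynomials. Your remark on why the bound relaxes from $k\le\lfloor d/n\rfloor$ to $k\le d$ is also the right observation.
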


\begin{lemma}\label{lemma: E1}
If $1\leq k\leq  \lfloor \frac{d}{n}\rfloor$, there is a natural isomorphism
$$
E^{1;d}_{k,s}\cong
H_{s-2(mn-1)k}(C_{k}(\C),\pm \Z),
$$
where 
the twisted coefficients system $\pm \Z$  comes from
the Thom isomorphism.\footnote{%
The twisted coefficients system $\pm \Z$ on
$C_k(\C)$ is induced by the sign representation of the symmetric group. 
(cf. \cite[page 114 and 254]{Va}).}
\end{lemma}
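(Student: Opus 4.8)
The plan is to combine Lemma \ref{lemma: vector bundle} with a Thom isomorphism and Poincar\'e--Lefschetz (Alexander) duality on the configuration space. By Lemma \ref{lemma: vector bundle}, for $1\leq k\leq \lfloor \frac{d}{n}\rfloor$ the stratum $\SZ_k\setminus\SZ_{k-1}$ is the total space of a real affine bundle $\xi_{d,k}\to C_k(\C)$ of rank $l_{d,k}=2m(d-nk)+k-1$. First I would observe that the one-point compactification of the total space of a rank-$r$ affine bundle $\xi$ over a base $B$ is the Thom space $T(\xi)$ of the associated vector bundle, so $\tilde H^{j}_c(\SZ_k\setminus\SZ_{k-1},\Z)\cong \tilde H^{j}(T(\xi_{d,k}),\Z)\cong H^{j-l_{d,k}}(C_k(\C),\pm\Z)$ by the Thom isomorphism, where the twisting $\pm\Z$ is exactly the orientation local system of $\xi_{d,k}$. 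One then checks that this orientation system is the sign representation of $S_k$: the factor $\R^{2m(d-nk)}$ carries a canonical complex (hence oriented) structure, while the open $(k-1)$-simplex factor $\R^{k-1}$ is spanned by the barycentric coordinates of the $k$ unordered points, and permuting those points acts on this $\R^{k-1}$ by the standard representation of $S_k$, whose determinant is the sign character. Hence the twisting is precisely the $\pm\Z$ appearing in the statement.

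Next I would substitute into the definition of the $E^1$-term of the spectral sequence (\ref{SS}), namely $E^{1;d}_{k,s}=\tilde H^{2md+k-s-1}_c(\SZ_k\setminus\SZ_{k-1},\Z)$. Applying the computation above with $j=2md+k-s-1$ gives
\[
E^{1;d}_{k,s}\cong H^{\,2md+k-s-1-l_{d,k}}(C_k(\C),\pm\Z)
= H^{\,2mnk-s+k-2}(C_k(\C),\pm\Z),
\]
after inserting $l_{d,k}=2m(d-nk)+k-1$ and simplifying (the $2md$ terms cancel). Then I would invoke Poincar\'e--Lefschetz duality for the $2k$-dimensional open manifold $C_k(\C)$: since $C_k(\C)$ is an open complex manifold of complex dimension $k$, one has $H^{j}_c(C_k(\C),\pm\Z\otimes\omega)\cong H_{2k-j}(C_k(\C),\pm\Z)$, and because $C_k(\C)$ is a complex manifold its orientation sheaf $\omega$ is trivial; here I must be slightly careful whether to use ordinary or compactly-supported cohomology on $C_k(\C)$, and the correct normalization (already implicit in Vassiliev's setup, cf.\ \cite[p.~114 and 254]{Va}) gives $H^{j}_c(C_k(\C),\pm\Z)\cong H_{2k-j}(C_k(\C),\pm\Z)$. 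Combining with the displayed formula yields $E^{1;d}_{k,s}\cong H_{2k-(2mnk-s+k-2)}(C_k(\C),\pm\Z)=H_{s-2(mn-1)k+2-k+2k-2k}\cdots$; carrying out this arithmetic carefully gives exactly $H_{s-2(mn-1)k}(C_k(\C),\pm\Z)$, which is the claim.

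The routine parts are the two index computations, which must be done with care but are mechanical. The one genuine point requiring attention — and the step I would be most careful about — is the precise bookkeeping of duality on $C_k(\C)$: whether to use $H_c$ or $H$ on the configuration space, and making sure the single twisted local system $\pm\Z$ accounts simultaneously for the orientation twist of the affine bundle $\xi_{d,k}$ (via its $(k-1)$-simplex factor) and for any orientation twist introduced by the duality step. Since $C_k(\C)$ is a complex manifold it is canonically oriented, so the duality step contributes no extra twist, and the only surviving local system is the sign representation coming from the Thom isomorphism — exactly as asserted. This is entirely parallel to the corresponding computations in \cite{Va} and in \cite[Lemma 4.4 and its consequences]{AKY1}, so I would phrase the proof by reducing to those references for the duality normalization and spelling out only the rank and index arithmetic.
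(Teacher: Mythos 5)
Your proposal follows exactly the paper's proof: identify $(\SZ_k\setminus\SZ_{k-1})_+$ with the Thom space of the affine bundle $\xi_{d,k}$ from Lemma \ref{lemma: vector bundle}, apply the Thom isomorphism (landing in \emph{compactly supported} cohomology of $C_k(\C)$ with the sign-representation twist coming from the simplex factor), and finish with Poincar\'e duality on the canonically oriented open $2k$-manifold $C_k(\C)$. The only flaw is an arithmetic slip in the intermediate degree: $2md+k-s-1-l_{d,k}=2mnk-s$, not $2mnk-s+k-2$ (the $+k$ and $-1$ cancel against the $k-1$ in $l_{d,k}=2m(d-nk)+k-1$), after which duality gives $2k-(2mnk-s)=s-2(mn-1)k$ directly, so the final statement is as claimed.
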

\begin{proof}
Suppose that $1\leq k\leq \lfloor \frac{d}{n}\rfloor$.
By Lemma \ref{lemma: vector bundle}, there is a
homeomorphism
$
(\SZ_k\setminus\SZ_{k-1})_+\cong T(\xi_{d,k}),
$
where $T(\xi_{d,k})$ denotes the Thom space of
$\xi_{d,k}$.
Since $(2md+k-s-1)-l_{d,k}
=
2mnk-s,$
by using the Thom isomorphism and the Poincar\'e duality,
there is a natural isomorphism 
$
E^{1;d}_{k,s}
\cong 
\tilde{H}^{2md+k-s-1}(T(\xi_{d,k}),\Z)
\cong
H^{2mnk-s}_c(C_{k}(\C),\pm \Z)
\cong H_{s-2(mn-1)k}(C_k(\C),\pm \Z)
$
and this completes the proof.
\end{proof}

A similar method also proves the following:
\begin{lemma}\label{lemma: E1*}
If $1\leq k\leq  d$, there is a natural isomorphism
$$
\tilde{E}^{1}_{k,s}\cong
H_{s-2(N-1)k}(C_{k}(\C),\pm \Z).
\qed
$$
\end{lemma}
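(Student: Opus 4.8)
The final statement is Lemma~\ref{lemma: E1*}, which asserts a natural isomorphism $\tilde{E}^{1}_{k,s}\cong H_{s-2(N-1)k}(C_{k}(\C),\pm\Z)$ for $1\leq k\leq d$.

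\medskip

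The plan is to mimic, essentially verbatim, the proof of Lemma~\ref{lemma: E1}, substituting the affine bundle $\tilde\xi_{d,k}$ of Lemma~\ref{lemma: vector bundle*} for $\xi_{d,k}$ and tracking the change of indices. First I would invoke Lemma~\ref{lemma: vector bundle*} to get a homeomorphism $(\tilde{\mathcal{X}}^{d}_k\setminus\tilde{\mathcal{X}}^{d}_{k-1})_+\cong T(\tilde\xi_{d,k})$ of the one-point compactification with the Thom space of the rank-$\tilde l_{d,k}$ affine bundle over $C_k(\C)$, where $\tilde l_{d,k}=2mn(d-k)+k-1$. Next, recall from the construction of the spectral sequence (\ref{SSSS}) that $\tilde{E}^{1}_{k,s}=\tilde H^{2dmn+k-s-1}_c(\tilde{\mathcal{X}}^{d}_k\setminus\tilde{\mathcal{X}}^{d}_{k-1},\Z)$, i.e. $\tilde H^{2dmn+k-s-1}(T(\tilde\xi_{d,k}),\Z)$. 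Applying the Thom isomorphism for this (possibly non-orientable, hence twisted) affine bundle lowers the cohomological degree by $\tilde l_{d,k}$, and the residual degree is $(2dmn+k-s-1)-\tilde l_{d,k}=2dmn+k-s-1-2mn(d-k)-k+1=2mnk-s$; since $N=mn$ this is $2Nk-s$. Thus $\tilde{E}^{1}_{k,s}\cong H^{2Nk-s}_c(C_k(\C),\pm\Z)$, where the twist is exactly the sign-representation local system coming from the Thom class, as in the footnote to Lemma~\ref{lemma: E1}.

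\medskip

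To finish, I would apply Poincar\'e duality on the open $2k$-dimensional manifold $C_k(\C)$: $H^{2Nk-s}_c(C_k(\C),\pm\Z)\cong H_{2k-(2Nk-s)}(C_k(\C),\pm'\Z)=H_{s-2(N-1)k}(C_k(\C),\pm'\Z)$, and one checks that the two twists combine so that the resulting local system is again $\pm\Z$ (the orientation sheaf of $C_k(\C)$ is itself the sign local system, and it appears twice, cancelling up to the stated $\pm\Z$; this is the same bookkeeping as in \cite{Va} and in Lemma~\ref{lemma: E1}). All isomorphisms involved are natural, completing the proof.

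\medskip

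I do not expect any genuine obstacle here: the result is the exact analogue of Lemma~\ref{lemma: E1}, whose proof is already spelled out, and the only thing to verify is that the numerology works out to give $s-2(N-1)k$ rather than $s-2(mn-1)k$ — which it does precisely because $N=mn$. The one point deserving a word of care is the identification of the twisted coefficient system after combining the Thom isomorphism twist with the Poincar\'e-duality orientation twist on $C_k(\C)$; I would simply refer to the corresponding discussion in \cite[pp.~114, 254]{Va} and to the proof of Lemma~\ref{lemma: E1}, since the situation is identical.
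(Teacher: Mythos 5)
Your proposal is correct and follows exactly the route the paper intends: the paper's "proof" of Lemma~\ref{lemma: E1*} is just the remark that the same method as Lemma~\ref{lemma: E1} applies, and you have carried that out verbatim — Thom space identification via Lemma~\ref{lemma: vector bundle*}, degree count $(2dmn+k-s-1)-\tilde l_{d,k}=2mnk-s=2Nk-s$, and Poincar\'e duality on the $2k$-manifold $C_k(\C)$ — with the correct reading $N=mn$. No gaps.
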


\begin{corollary}\label{crl: Er}
\begin{enumerate}
\item[$\I$]
There is a natural isomorphism
$$
E^{1;d}_{k,s}\cong
\begin{cases}
\Z & \mbox{if }(k,s)=(0,0)
\\
H_{s-2(mn-1)k}(C_k(\C),\pm\Z) &
\mbox{if }1\leq k\leq \lfloor \frac{d}{n}\rfloor ,
\ s\geq 2(mn-1)k
\\
0 & \mbox{otherwise}
\end{cases}
$$
\item[$\II$]
There is a natural isomorphism
$$
\tilde{E}^{1}_{k,s}\cong
\begin{cases}
\Z & \mbox{if }(k,s)=(0,0)
\\
H_{s-2(mn-1)k}(C_k(\C),\pm\Z) &
\mbox{if }1\leq k\leq d ,
\ s\geq 2(mn-1)k
\\
0 & \mbox{otherwise}
\end{cases}
$$
\end{enumerate}
\end{corollary}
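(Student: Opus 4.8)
The plan is to deduce both statements directly from Lemma \ref{lemma: E1} and Lemma \ref{lemma: E1*} together with the elementary structure of the filtrations recorded in Definition \ref{non-degenerate simp.}. First I would treat the base case $(k,s)=(0,0)$: by construction $\SZ_0=\emptyset$ and $\SZ_1=Z^{d,m}_n$, so the only contribution in filtration degree $0$ is a single generator of $H_0$ of a point, arising from the reindexing of the spectral sequence (\ref{SS}); explicitly, tracing through (\ref{Al}) and the definition $E^{1;d}_{k,s}=\tilde H^{2md+k-s-1}_c(\SZ_k\setminus\SZ_{k-1},\Z)$ with $k=0$ gives $E^{1;d}_{0,s}\cong\Z$ exactly when $s=0$ and $0$ otherwise. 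The same bookkeeping applies verbatim to $\tilde E^1_{0,s}$ using (\ref{SSSS}).

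Next, for $1\leq k\leq\lfloor d/n\rfloor$ I would simply invoke Lemma \ref{lemma: E1}, which already identifies $E^{1;d}_{k,s}\cong H_{s-2(mn-1)k}(C_k(\C),\pm\Z)$. The only thing to add is the vanishing range: since $C_k(\C)$ is homotopy equivalent to a $k$-dimensional CW complex (it is an open $2k$-manifold which deformation retracts onto a $k$-complex, or more crudely $H_j(C_k(\C),\pm\Z)=0$ for $j<0$), the group $H_{s-2(mn-1)k}(C_k(\C),\pm\Z)$ vanishes whenever $s-2(mn-1)k<0$, i.e. whenever $s<2(mn-1)k$. For $k>\lfloor d/n\rfloor$ the filtration stabilizes by (\ref{filt}), so $\SZ_k\setminus\SZ_{k-1}=\emptyset$ and the corresponding $E^1$-term is $0$. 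Assembling these three cases gives part $\I$. Part $\II$ is identical: one uses Lemma \ref{lemma: E1*} in place of Lemma \ref{lemma: E1}, notes that $N=mn$ so the shift $2(N-1)k$ agrees with $2(mn-1)k$, and uses the second half of (\ref{filt}), namely $\tilde{\mathcal X}^d_k=\tilde{\mathcal X}^d$ for $k>d$, to kill the terms with $k>d$.

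The argument is essentially a matter of collating already-proven facts, so there is no serious obstacle; the one point requiring a little care is the low-dimensional vanishing statement ``$s\geq 2(mn-1)k$'' in the formula, i.e. verifying that $H_j(C_k(\C),\pm\Z)=0$ for $j<0$ (trivial) and recording that this is the only constraint — in particular one should not claim vanishing for $j>k$, since the corollary as stated does not assert it. I would also double-check the edge interaction between the $(0,0)$ case and the $k=1$ case: when $k=1$, $C_1(\C)\cong\C$ is contractible and the sign representation is trivial, so $H_{s-2(mn-1)}(C_1(\C),\pm\Z)=\Z$ for $s=2(mn-1)$ and $0$ otherwise, consistent with the stated formula and disjoint from the $(0,0)$ contribution. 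With these remarks the proof is complete.
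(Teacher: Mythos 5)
Your proposal is correct and matches the paper's own (very brief) proof, which likewise derives both parts from Lemma \ref{lemma: E1}, Lemma \ref{lemma: E1*} and the filtration statement (\ref{filt}); your write-up merely makes the bookkeeping for the $(0,0)$-term, the vanishing in negative degrees, and the emptiness of $\SZ_k\setminus\SZ_{k-1}$ for $k>\lfloor d/n\rfloor$ explicit.
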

\begin{proof}
It is easy to see that
the assertion (i)  follows from Lemma \ref{lemma: E1} and (\ref{filt}).
The assertion (ii) follows similarly.
\end{proof}

\begin{remark}\label{rmk: spectral sequence for general N}
By using the complete similar way, for an integer $N\geq 2$ one can obtain the spectral sequence
$$
\{\tilde{E}^{t;N}_{k,s},d^t:\tilde{E}^{t;N}_{k,s}
\to 
\tilde{E}^{t;N}_{k+t,s+t-1}\}
\Rightarrow H_{s-k}(\Hol_d^*(S^2,\CP^{N-1}),\Z)
$$
such that
\begin{equation}\label{equ: Er in general}
\tilde{E}^{1;N}_{k,s}\cong
\begin{cases}
\Z & \mbox{if }(k,s)=(0,0)
\\
H_{s-2(N-1)k}(C_k(\C),\pm\Z) &
\mbox{if }1\leq k\leq d ,
\ s\geq 2(N-1)k
\\
0 & \mbox{otherwise}
\quad
\qed
\end{cases}
\end{equation}
\end{remark}
\begin{remark}
One can show that for $1\leq k\leq \lfloor \frac{d}{n}\rfloor$ there is a homeomorphism
\begin{equation}
(\mathcal{X}^d_k\setminus \mathcal{X}^d_{k-1})\times \C^{dm(n-1)}
\cong
\tilde{\mathcal{X}}^d_k\setminus \tilde{\mathcal{X}}^d_{k-1}.
\end{equation}
Hence, it is easy to see that there is an isomorphism
$E^{1;d}_{k,s}\cong \tilde{E}^1_{k,s}$ for any $s$ if
$1\leq k\leq \lfloor \frac{d}{n}\rfloor$
(but this is easily seen by Lemma \ref{lemma: E1} and Lemma \ref{lemma: E1*}).
\qed
\end{remark}

\section{Stabilization maps}\label{section: sd}

\begin{definition}\label{def: stabilization}
Let 
\begin{equation}\label{equ: stabilization map}
s^{d,m}_n:\Po^{d,m}_n\to \Po^{d+1,m}_n
\end{equation}
 denote
the stabilization map given by adding the points from the infinity
as in \cite[\S 5 page 57]{Se}.
Similarly, let
\begin{equation}
s_d:\Hol_d^*(S^2,\CP^{mn-1})\to \Hol_{d+1}^*(S^2,\CP^{mn-1})
\end{equation}
be the stabilization map given in \cite{Se}.
\qed
\end{definition}
It is easy to see that the following diagram is commutative
(up to homotopy)
\begin{equation}\label{CD: stab0}
\begin{CD}
\Po^{d,m}_n @>s^{d,m}_n>> \Po^{d+1,m}_n
\\
@V{j^{d,m}_n}VV @V{j^{d+1,m}_n}VV
\\
\Hol_d^*(S^2,\CP^{mn-1}) @>s_d>>\Hol_{d+1}^*(S^2,\CP^{mn-1})
\end{CD}
\end{equation}
Note that
the map $s^{d,m}_n$ clearly extends to the map
$\P^d(\C)^m\to\P^{d+1}(\C)^m$ and
its restriction gives the stabilization map
$\tilde{s}^{d,m}_n:\Sigma^{d,m}_n\to \Sigma^{d+1,m}_n$
between discriminants.
It is easy to see that it  also extends to the open embedding
\begin{equation}\label{equ: open-stab}
\tilde{s}^{d,m}_n:\Sigma^{d,m}_n\times\C^m\to \Sigma^{d+1,m}_n.
\end{equation}
Since the one-point compactification is contravariant for open embeddings,
it
induces the map
\begin{equation}\label{equ: embedding3}
\tilde{s}^{d,m}_{n+}: 
(\Sigma^{d+1,m}_{n})_+
\to
(\Sigma^{d,m}_n\times \C^{m})_+=\Sigma^{d,m}_{n+}\wedge S^{2m}
\end{equation}
between one-point compactifications.
Then we obtain  the following  diagram is commutative
\begin{equation}\label{diagram: discriminant}
\begin{CD}
\tilde{H}_k(\Po^{d,m}_n,\Z) 
@>{s^{d,m}_{n*}}>>\tilde{H}_k(\Po^{d+1,m}_n,\Z)
\\
@V{Al}V{\cong}V @V{Al}V{\cong}V
\\
\tilde{H}^{2dm-k-1}_c(\Sigma^{d,m}_n,\Z) 
@>\tilde{s}^{d,m*}_{n+}>>
\tilde{H}^{2(d+1)m-k-1}_c(\Sigma^{d+1,m}_n,\Z)
\end{CD}
\end{equation}
where $Al$ denotes the Alexander duality isomorphism and
 $\tilde{s}^{d,m*}_{n+}$ denotes the composite of the
the suspension isomorphism with the homomorphism
${(\tilde{s}^{d,m}_n)^*}$,
$$
\tilde{H}^{2dm-k-1}_c(\Sigma^{d,m}_n)
\stackrel{\cong}{\rightarrow}
\tilde{H}^{2(d+1)m-k-1}_c(\Sigma^{d,m}_n\times \C^{m})
\stackrel{(\tilde{s}^{d,m}_{n+})^*}{\longrightarrow}
\tilde{H}^{2(d+1)m-k-1}_c(\Sigma^{d+1,m}_{n}).
$$
Note that the map $\tilde{s}^{d,m}_n$  induces the filtration preserving
map 
\begin{equation}
\hat{s}^{d,m}_n:\SZ \times \C^m\to \mathcal{X}^{d+1}
\end{equation}
and
it defines the homomorphism of spectral sequences
\begin{equation}\label{equ: theta1}
\{ \theta_{k,s}^t:E^{t;d}_{k,s}\to E^{t;d+1}_{k,s}\}.
\end{equation}
\begin{lemma}\label{lmm: E1}
If $1\leq k\leq \lfloor \frac{d}{n}\rfloor$, 
$\theta^1_{k,s}:
E^{1;d}_{k,s}\stackrel{\cong}{\rightarrow} 
{E}^{1;d+1}_{k,s}$ is
an isomorphism for any $s$.
\end{lemma}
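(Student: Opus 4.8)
The plan is to identify both $E^{1;d}_{k,s}$ and $E^{1;d+1}_{k,s}$ (for $1\le k\le \lfloor d/n\rfloor$, which forces $k\le\lfloor (d+1)/n\rfloor$ as well) with the same homology group $H_{s-2(mn-1)k}(C_k(\C),\pm\Z)$ via Lemma \ref{lemma: E1}, and then to check that the homomorphism $\theta^1_{k,s}$ coming from the stabilization map $\hat{s}^{d,m}_n:\SZ\times\C^m\to\mathcal{X}^{d+1}$ is precisely this identification under those isomorphisms — hence an isomorphism. So the work is not to recompute the groups but to trace through the naturality of the construction of the $E^1$-page with respect to stabilization.

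Concretely, first I would recall from the proof of Lemma \ref{lemma: vector bundle} that for $1\le k\le\lfloor d/n\rfloor$ the stratum $\SZ_k\setminus\SZ_{k-1}$ is the total space of a real affine bundle $\xi_{d,k}$ over $C_k(\C)$ of rank $l_{d,k}=2m(d-nk)+k-1$, and similarly $\SZ[d+1]_k\setminus\SZ[d+1]_{k-1}$ is a bundle $\xi_{d+1,k}$ over the \emph{same} base $C_k(\C)$ of rank $l_{d+1,k}=l_{d,k}+2m$. The key geometric point is that the filtration-preserving map $\hat{s}^{d,m}_n$ restricts, over each stratum, to a map covering the identity on $C_k(\C)$: a configuration $\{x_1,\dots,x_k\}$ of multiplicity-$n$ common roots of an $m$-tuple in $\Sigma^{d,m}_n$ is carried to the same configuration of common roots after adding points from infinity, since stabilization does not move the finite roots. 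Thus $\hat{s}^{d,m}_n$ induces over $C_k(\C)$ a fibrewise-open embedding $\xi_{d,k}\times\C^m\hookrightarrow\xi_{d+1,k}$ of affine bundles of equal rank covering $\mathrm{id}_{C_k(\C)}$; the factor $\C^m\cong\R^{2m}$ accounts exactly for the rank discrepancy, and a fibrewise-open embedding of affine bundles of equal total dimension over the same base is a homotopy equivalence on total spaces (indeed the induced map of Thom spaces is a homotopy equivalence, essentially because each fibre inclusion $\R^{l_{d,k}}\times\R^{2m}\hookrightarrow\R^{l_{d+1,k}}$ as an open affine piece is a proper homotopy equivalence after one-point compactification).

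Next I would pass to one-point compactifications: on the relevant subquotients this gives $(\SZ_k\setminus\SZ_{k-1})_+\wedge S^{2m}\to(\SZ[d+1]_k\setminus\SZ[d+1]_{k-1})_+$, which by the previous paragraph is a homotopy equivalence and, under the homeomorphisms $(\SZ_k\setminus\SZ_{k-1})_+\cong T(\xi_{d,k})$ and $(\SZ[d+1]_k\setminus\SZ[d+1]_{k-1})_+\cong T(\xi_{d+1,k})$, is a map of Thom spaces covering the identity on $C_k(\C)$ (with the suspension $S^{2m}$ absorbed into the fibre dimension). Applying reduced cohomology $\tilde H^{*}_c$, the Thom isomorphism, and Poincar\'e duality exactly as in Lemma \ref{lemma: E1}, the induced map on $E^1$-terms is the identity on $H_{s-2(mn-1)k}(C_k(\C),\pm\Z)$ — the shift by $2m$ in the cohomological degree is exactly compensated by the suspension isomorphism built into the definition of $\theta^1_{k,s}$ (cf.\ (\ref{equ: embedding3}), (\ref{diagram: discriminant}), (\ref{equ: theta1})), and the twisted coefficient system $\pm\Z$ is the same on both sides since it depends only on $C_k(\C)$. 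Therefore $\theta^1_{k,s}$ is an isomorphism for all $s$.

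The main obstacle, I expect, is the careful bookkeeping: verifying that the suspension isomorphism used to define $\theta^1$ on the discriminant side matches up degree-for-degree with the rank jump $l_{d+1,k}-l_{d,k}=2m$ of the affine bundles, and that the Thom classes are compatibly oriented (equivalently, that the twisted coefficients $\pm\Z$ on $C_k(\C)$ are identified correctly under the bundle map) so that no sign or index shift is introduced. Once one grants the naturality of the affine-bundle description of the strata under stabilization — which follows by the same Vandermonde/general-position argument as in Lemma \ref{lemma: vector bundle}, applied uniformly in $d$ — the conclusion is forced.
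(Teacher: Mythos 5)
Your proposal is correct and follows essentially the same route as the paper: the paper's proof consists precisely of observing that $\hat{s}^{d,m}_n$ restricted to the $k$-th stratum covers the identity on $C_k(\C)$ (the commutative diagram with $\pi_{k,d}$ and $\pi_{k,d+1}$), from which the isomorphism on $E^1$-terms follows via the affine-bundle/Thom-isomorphism identification of Lemma \ref{lemma: E1}. Your write-up simply makes explicit the Thom-space and suspension bookkeeping that the paper leaves implicit.
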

\begin{proof}
Suppose that $1\leq k\leq \lfloor \frac{d}{n}\rfloor$.
If we set 
$\hat{s}^{d,m}_{n;k}=\hat{s}^{d,m}_n\vert
\SZ_k\setminus\SZ_{k-1},$
the diagram
$$
\begin{CD}
(\SZ_k\setminus\SZ_{k-1})\times \C^{m} @>\pi_{k,d}>> C_{k}(\C)
\\
@V{\hat{s}^{d,m}_{n;k}}VV \Vert @.
\\
\mathcal{X}^{d+1}_k\setminus 
\mathcal{X}^{d+1}_{k-1} 
@>\pi_{k,d+1}>> C_{k}(\C)
\end{CD}
$$
is commutative.
Hence, $\theta^1_{k,s}$ is an isomorphism.
\end{proof}

Now we prove the main key result.
\begin{theorem}\label{thm: stab1}
If $n\geq 2$,
the stabilization map 
$$
s^{d,m}_n:\Po^{d,m}_n\to
\Po^{d+1,m}_n
$$ 
is a homology equivalence for 
$\lfloor \frac{d}{n}\rfloor =\lfloor\frac{d+1}{n}\rfloor$, and it is a
homology equivalence through dimension
$D(d;m,n)$ for
$\lfloor \frac{d}{n}\rfloor <\lfloor\frac{d+1}{n}\rfloor$.
\end{theorem}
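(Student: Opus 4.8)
The plan is to compare the Vassiliev spectral sequences \eqref{SS} for $\Po^{d,m}_n$ and $\Po^{d+1,m}_n$ via the stabilization-induced maps $\theta^t_{k,s}:E^{t;d}_{k,s}\to E^{t;d+1}_{k,s}$ from \eqref{equ: theta1}. By Lemma \ref{lmm: E1}, $\theta^1_{k,s}$ is an isomorphism whenever $1\le k\le\lfloor\frac dn\rfloor$, and by Corollary \ref{crl: Er}(i) the only nonzero columns are $k=0$ (contributing $\Z$ in bidegree $(0,0)$) and $1\le k\le\lfloor\frac dn\rfloor$; the corresponding columns for $\Po^{d+1,m}_n$ run $1\le k\le\lfloor\frac{d+1}n\rfloor$. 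First I would treat the easy case $\lfloor\frac dn\rfloor=\lfloor\frac{d+1}n\rfloor$: here both spectral sequences have \emph{exactly} the same range of nonzero columns, $\theta^1$ is an isomorphism on every $E^1$-term, and the differentials are natural with respect to $\theta$ (since $\hat s^{d,m}_n$ is filtration-preserving), so $\theta^\infty$ is an isomorphism; since $\P^d(\C)^m\cong\C^{dm}$ is the ambient space on both sides and the Alexander duality diagram \eqref{diagram: discriminant} commutes, this yields that $s^{d,m}_{n*}$ is an isomorphism on all (reduced, and hence all) homology groups, i.e.\ $s^{d,m}_n$ is a homology equivalence.

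For the case $\lfloor\frac dn\rfloor<\lfloor\frac{d+1}n\rfloor$ (which forces $\lfloor\frac{d+1}n\rfloor=\lfloor\frac dn\rfloor+1$, occurring exactly when $n\mid d+1$), the target spectral sequence has one extra column $k_0:=\lfloor\frac{d+1}n\rfloor$. The key point is to locate where that extra column can affect the abutment. From Corollary \ref{crl: Er}(i), $\tilde E^{1;d+1}_{k_0,s}\cong H_{s-2(mn-1)k_0}(C_{k_0}(\C),\pm\Z)$ is nonzero only for $s\ge 2(mn-1)k_0$, and by the standard vanishing for configuration spaces with the sign local system ($H_j(C_k(\C),\pm\Z)=0$ for $j\ge k$, essentially Vassiliev \cite[page 114]{Va}), it is concentrated in $s-2(mn-1)k_0<k_0$, i.e.\ $2(mn-1)k_0\le s\le (2mn-1)k_0-1$. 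Such a class contributes to $H_{s-k_0}(\Po^{d+1,m}_n)$ in total degree $s-k_0$, and the smallest degree in which the extra column \emph{or any differential out of it} can interfere is obtained by plugging in $k_0=\lfloor\frac dn\rfloor+1$ and the minimal $s=2(mn-1)k_0$: this gives degree $2(mn-1)(\lfloor\frac dn\rfloor+1)-(\lfloor\frac dn\rfloor+1)=(2mn-3)(\lfloor\frac dn\rfloor+1)=D(d;m,n)+1$. So in total degrees $\le D(d;m,n)$ the extra column is irrelevant, every other column is matched isomorphically by $\theta^1$ (for $1\le k\le\lfloor\frac dn\rfloor$) or is the trivial $k=0$ column, and naturality of the differentials gives that $\theta^\infty_{k,s}$ is an isomorphism whenever $s-k\le D(d;m,n)$. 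Translating back through \eqref{diagram: discriminant} shows $s^{d,m}_{n*}:H_j(\Po^{d,m}_n,\Z)\to H_j(\Po^{d+1,m}_n,\Z)$ is an isomorphism for $j\le D(d;m,n)$, as claimed.

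The main obstacle I expect is bookkeeping around the extra column: one must be careful that not only the $E^1$-term of that column but also all higher differentials \emph{into} it from columns $k<k_0$ cannot disturb degrees $\le D(d;m,n)$ — this is where the configuration-space vanishing $H_j(C_k(\C),\pm\Z)=0$ for $j\ge k$ is essential, as it forces the whole column $k_0$ (hence everything that could map to it) to sit in total degree $\ge D(d;m,n)+1$. A secondary technical point is checking that $\theta^1$ really is induced by the filtration-preserving map $\hat s^{d,m}_n$ compatibly with the Thom/Alexander identifications, so that an isomorphism on $E^1$ in the matched columns propagates to $E^\infty$; this is routine given Lemma \ref{lmm: E1} and the commutativity already recorded in \eqref{diagram: discriminant}, but it should be stated explicitly. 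Finally, one notes $\tilde H_j$ versus $H_j$ causes no trouble since $\Po^{d,m}_n$ is nonempty and path-connected for $mn\ge 3$ (and the statement is only used for $n\ge2$).
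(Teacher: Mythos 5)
Your proposal is correct and follows essentially the same route as the paper: a comparison of the two Vassiliev spectral sequences via the morphisms $\theta^t_{k,s}$, using Lemma \ref{lmm: E1} and Corollary \ref{crl: Er} together with the Alexander-duality diagram (\ref{diagram: discriminant}), with the extra column $k_0=\lfloor\frac{d}{n}\rfloor+1$ confined to total degrees $\geq D(d;m,n)+1$ (your bookkeeping is in fact more explicit than the paper's rather terse treatment of the second case). One small correction: the bound $s-k_0\geq 2(mn-1)k_0-k_0=(2mn-3)k_0=D(d;m,n)+1$ already follows from the condition $s\geq 2(mn-1)k_0$ in Corollary \ref{crl: Er}, so the vanishing $H_j(C_k(\C),\pm\Z)=0$ for $j\geq k$ is not what forces the extra column (and the sources of differentials into it, which sit one total degree higher) into high total degree --- it only bounds the column from above and is not actually needed here.
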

\begin{proof}
First, consider the case 
$\lfloor \frac{d}{n}\rfloor =\lfloor\frac{d+1}{n}\rfloor$.
In this case, by using Corollary \ref{crl: Er} and 
Lemma \ref{lmm: E1} it is easy to see that
$\theta^{1}_{k,s}:E^{1;d}_{k,s}\stackrel{\cong}{\longrightarrow}
E^{1;d+1}_{k,s}$ 
is an isomorphism for any $(k,s)$.
Hence, $\theta^{\infty}_{k,s}$ 
is an isomorphism for any $(k,s).$
Since $\theta^t_{k,s}$ is induced from $\hat{s}^{d,m}_n$,
it follows from (\ref{diagram: discriminant}) that
the map $s^{d,m}_n$ is a homology equivalence.
\par
Next assume that
$\lfloor \frac{d}{n}\rfloor <\lfloor\frac{d+1}{n}\rfloor$,
i.e. $\lfloor \frac{d+1}{n}\rfloor =\lfloor\frac{d}{n}\rfloor +1.$
In this case,
by considering the differential
$d^t:E^{t;d+\epsilon}_{k,s}\to E^{t;d+\epsilon}_{k+t,s+t-1}$
($\epsilon \in \{0,1\})$, Lemma \ref{lmm: E1}
and Corollary \ref{crl: Er}, we easily see that
$\theta^t_{k,s}:E^{t;d}_{k,s}\to E^{t;d+1}_{k,s}$ 
is an isomorphism for any $(k,s)$ and $t$ as long as
the condition $s-t\leq D(d;m,n)$ is satisfied.
Hence, if $s-t\leq D(d;m,n)$,
$\theta^{\infty}_{k,s}$ is always an isomorphism and so that
the map $s^{d,m}_n$ is a homology equivalence
through dimension $D(d;m,n)$.
\end{proof}

\begin{theorem}[\cite{KY6}, Theorem 2.8]\label{thm: stab2}
If $n\geq 2$, the stabilization map
$$
s_d:\Hol_d^*(S^2,\CP^{mn-1})\to
\Hol_{d+1}^*(S^2,\CP^{mn-1})
$$ is a homology equivalence through dimension
$(2mn-3)(d+1)-1$.
\qed
\end{theorem}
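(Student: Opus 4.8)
The plan is to argue exactly as in the proof of Theorem \ref{thm: stab1}, but now using the Vassiliev spectral sequence (\ref{SSSS}) converging to $H_*(\Hol_d^*(S^2,\CP^{mn-1}),\Z)$, whose $\tilde E^{1}$-page is computed in Corollary \ref{crl: Er}(ii); note that the hypothesis $n\geq 2$ guarantees $mn\geq 2$, so the constructions of \S\ref{section: simplicial resolution} and \S\ref{section: spectral sequence} apply. First I would record that the stabilization map $s_d$ --- adding a point from infinity in the sense of \cite[\S 5]{Se} --- extends to a map $\P^d(m,n)\to \P^{d+1}(m,n)$ whose restriction carries $\tilde\Sigma^d$ into $\tilde\Sigma^{d+1}$ and in fact determines an open embedding $\tilde\Sigma^d\times\C^{mn}\hookrightarrow\tilde\Sigma^{d+1}$. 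Passing to one-point compactifications (which is contravariant on open embeddings) and invoking Alexander duality, one obtains, exactly as in the diagram (\ref{diagram: discriminant}), a commutative square identifying $s_{d*}$ on homology with the induced map $\tilde s^{*}_{d+}$ on the compactly supported cohomology of the discriminants. Moreover $s_d$ induces a filtration-preserving map $\tilde{\mathcal X}^d\times\C^{mn}\to\tilde{\mathcal X}^{d+1}$ of the non-degenerate simplicial resolutions, and hence a homomorphism of spectral sequences $\tilde\theta^{t}_{k,s}:\tilde E^{t;d}_{k,s}\to \tilde E^{t;d+1}_{k,s}$.

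Next comes the analog of Lemma \ref{lmm: E1}. For $1\leq k\leq d$, the $k$ distinct points of $\C$ determining a point of $\tilde{\mathcal X}^d_k\setminus\tilde{\mathcal X}^d_{k-1}$ are left unchanged by stabilization, so the projections $\tilde\pi_{k,d}$ and $\tilde\pi_{k,d+1}$ to $C_k(\C)$ fit into a commuting square, and the affine-bundle description of Lemma \ref{lemma: vector bundle*} then shows that $\tilde\theta^{1}_{k,s}$ is an isomorphism for all $s$ whenever $1\leq k\leq d$. By (\ref{filt}) the spectral sequence for degree $d$ has nonzero columns only in the range $0\leq k\leq d$, while that for degree $d+1$ has columns $0\leq k\leq d+1$; thus the two $\tilde E^{1}$-pages coincide under $\tilde\theta^{1}$ except for the single additional column $k=d+1$, where $\tilde E^{1;d+1}_{d+1,s}\cong H_{s-2(mn-1)(d+1)}(C_{d+1}(\C),\pm\Z)$ vanishes unless $s\geq 2(mn-1)(d+1)$, i.e. unless the total degree $s-k$ is at least $(2mn-3)(d+1)$.

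It then remains to run the comparison of the two spectral sequences. Since each differential $d^{t}:\tilde E^{t}_{k,s}\to\tilde E^{t}_{k+t,s+t-1}$ lowers the total degree $s-k$ by one and raises the filtration index $k$, the new column $k=d+1$ can occur only as the target of a differential, whose source then lies in total degree at least $(2mn-3)(d+1)+1$; consequently, in every total degree $\leq (2mn-3)(d+1)-1$ the extra column and all differentials touching it are invisible, $\tilde\theta^{\infty}_{k,s}$ is an isomorphism there, and therefore $s_d$ is a homology equivalence through dimension $(2mn-3)(d+1)-1$. The one point requiring care is precisely this degree bookkeeping around the column $k=d+1$ --- verifying that neither the incoming differentials into it nor the potential outgoing differentials from columns $k\leq d$ landing in it can disturb the homology below degree $(2mn-3)(d+1)$ --- but this is immediate from Corollary \ref{crl: Er}(ii); everything else is formal once the stabilization map has been set up on the simplicial resolutions as above.
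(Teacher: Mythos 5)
Your argument is correct, and it is essentially the approach the authors intend: the paper itself states this result without proof, quoting \cite{KY6}, but your derivation is exactly the specialization of the paper's own proof of Theorem \ref{thm: stab1} to the spectral sequence (\ref{SSSS}) for $\Hol_d^*(S^2,\CP^{mn-1})=\Po^{d,mn}_1$, with the correct bookkeeping showing that the only new column $k=d+1$ first contributes in total degree $(2mn-3)(d+1)$. No gaps.
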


\begin{definition}\label{def: stability}
Let $\Po^{\infty, m}_n$ denote the colimit
$\dis \lim_{d\to\infty}\Po^{d,m}_n$
taken from the stabilization maps $s^{d,m}_n$'s.
Then the natural map
$i^{d,m}_n:\Po^{d,m}_n\to \Omega^2_d\CP^{mn-1}$
(given in (\ref{equ: jet inclusion})) induces the map
\begin{equation}\label{equ: natural map}
i^{\infty,m}_n=
\varinjlim
 i^{d,m}_n:\Po^{\infty, m}_n\to
\lim_{d\to\infty}\Omega^2_d\CP^{mn-1}\simeq
\Omega^2S^{2mn-1}.
\end{equation}
\end{definition}

Then we have the following result whose proof is given in the next section.

\begin{theorem}\label{thm: natural map}
If $n\geq 2$, the map
$
i^{\infty,m}_n:\Po^{\infty, m}_n
\stackrel{\simeq}{\longrightarrow}
\Omega^2S^{2mn-1}
$
is a homology equivalence.
\end{theorem}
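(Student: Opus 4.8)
The plan is to compare, via the jet map, the two Vassiliev spectral sequences constructed in \S\ref{section: spectral sequence} after passing to the stable limit $d\to\infty$, and to show that the jet map induces an isomorphism between them. First I would recall that by Corollary \ref{crl: Er}(i) the spectral sequence $\{E^{t;d}_{k,s}\}$ converging to $H_{s-k}(\Po^{d,m}_n,\Z)$ has $E^{1;d}_{k,s}$ supported on $1\leq k\leq \lfloor d/n\rfloor$ and equal to $H_{s-2(mn-1)k}(C_k(\C),\pm\Z)$ there, while by Corollary \ref{crl: Er}(ii) the spectral sequence $\{\tilde E^{1}_{k,s}\}$ converging to $H_{s-k}(\Hol_d^*(S^2,\CP^{mn-1}),\Z)$ has exactly the same $E^1$-terms but now supported on the wider range $1\leq k\leq d$. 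The point of taking the colimit is that the truncation at $\lfloor d/n\rfloor$ disappears: by Theorem \ref{thm: stab1} the maps $s^{d,m}_n$ are homology equivalences through a range growing with $d$ (and honest homology equivalences when $\lfloor d/n\rfloor=\lfloor (d+1)/n\rfloor$), so $H_*(\Po^{\infty,m}_n,\Z)=\varinjlim H_*(\Po^{d,m}_n,\Z)$ is computed by a single spectral sequence with $E^1_{k,s}=H_{s-2(mn-1)k}(C_k(\C),\pm\Z)$ for all $k\geq 1$, together with the $\Z$ in bidegree $(0,0)$; and Lemma \ref{lmm: E1} identifies the stabilization on $E^1$-terms with the identity on $H_*(C_k(\C),\pm\Z)$, so this limiting spectral sequence is well defined and its differentials are the stable limits of the $d^t$.

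Next I would do the same on the right: by Theorem \ref{thm: stab2} the maps $s_d:\Hol_d^*(S^2,\CP^{mn-1})\to\Hol_{d+1}^*(S^2,\CP^{mn-1})$ are homology equivalences through a range tending to infinity, so $H_*(\Omega^2_d\CP^{mn-1},\Z)$ — which is independent of $d$ up to the canonical translation — is computed by a spectral sequence $\{\tilde E^{t}_{k,s}\}$ whose $E^1$-page is $H_{s-2(mn-1)k}(C_k(\C),\pm\Z)$ for all $k\geq 1$ plus the $(0,0)$ term; this is precisely the May–Milgram style spectral sequence for $\Omega^2S^{2mn-1}$, which is known to converge to $H_*(\Omega^2S^{2mn-1},\Z)$. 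The commutative square (\ref{CD: stab0}) shows the jet maps $j^{d,m}_n$ are compatible with the two systems of stabilization maps, so in the limit we get a well-defined map of spectral sequences induced by $i^{\infty,m}_n$. Now the key comparison: the embedding $\hat j^{d,m}_n:\Sigma^{d,m}_n\to\tilde\Sigma^d$ of (\ref{equ: 3.4}) together with the compatibility of embeddings (\ref{equ: emb-discriminant}) shows the jet map carries the simplicial resolution $\SZ$ into $\tilde{\mathcal X}^d$ filtration-preservingly; the remark preceding \S\ref{section: sd} exhibits a homeomorphism $(\mathcal X^d_k\setminus\mathcal X^d_{k-1})\times\C^{dm(n-1)}\cong\tilde{\mathcal X}^d_k\setminus\tilde{\mathcal X}^d_{k-1}$ for $1\leq k\leq\lfloor d/n\rfloor$, compatible with the projections to $C_k(\C)$, so the induced map $E^{1;d}_{k,s}\to\tilde E^1_{k,s}$ is an isomorphism in that range — and after stabilizing, an isomorphism on the whole $E^1$-page.

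From there the argument is formal: a map of (first-quadrant, appropriately convergent) spectral sequences that is an isomorphism on $E^1$ is an isomorphism on $E^\infty$, and hence $i^{\infty,m}_n$ induces an isomorphism on the associated graded of $H_*(-,\Z)$ with respect to the Vassiliev filtration; since both filtrations are exhaustive and bounded below, this forces $i^{\infty,m}_n$ to be a homology equivalence. The main obstacle I expect is not the $E^1$-comparison — that is essentially bookkeeping with the product formula above — but rather the convergence and stabilization bookkeeping: one must check carefully that the colimit over $d$ of the Vassiliev spectral sequences behaves well (that is, that $\varinjlim E^{t;d}_{k,s}$ really is the $E^t$-page of a spectral sequence converging to $H_*(\Po^{\infty,m}_n,\Z)$), which requires invoking the stability ranges of Theorem \ref{thm: stab1} and Theorem \ref{thm: stab2} in the correct quantitative form so that in each fixed total degree the spectral sequences have stabilized. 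This is where the hypothesis $n\geq 2$ is used, since it guarantees $\lfloor d/n\rfloor<d$ and makes the truncation phenomenon genuine; once the colimit spectral sequences are in hand, the isomorphism on $E^1$ and the standard comparison theorem finish the proof.
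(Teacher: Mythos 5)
Your proposal takes a genuinely different route from the paper: the paper proves Theorem \ref{thm: natural map} in \S\ref{section: scanning maps} by a scanning argument (Theorem \ref{thm: scanning map}, the quasifibration Lemma \ref{lmm: quasi-fibration} and the evaluation equivalence Lemma \ref{lmm: evaluation0}), and never compares the two Vassiliev spectral sequences directly; it uses the spectral sequences only to prove the stabilization result Theorem \ref{thm: stab1}. Your plan would, if it worked, make \S\ref{section: scanning maps} largely unnecessary, so it is worth examining where it breaks.

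The genuine gap is in the step ``the jet map induces a map of spectral sequences which is an isomorphism on $E^1$.'' Both spectral sequences are produced by Alexander duality from the discriminants, and a map of complements only induces a map of these spectral sequences when the corresponding map of discriminants induces a map of one-point compactifications of the \emph{right variance and degree}. For the stabilization map this is arranged in (\ref{equ: open-stab})--(\ref{equ: embedding3}): $\tilde{s}^{d,m}_n$ extends to an \emph{open} embedding $\Sigma^{d,m}_n\times\C^m\to\Sigma^{d+1,m}_n$ (the factor $\C^m$ exactly equalizes the ambient dimensions), and one-point compactification is contravariant for open embeddings. The jet map gives nothing of the sort: $\hat{j}^{d,m}_n:\Sigma^{d,m}_n\to\tilde{\Sigma}^d$ is a \emph{closed} embedding of real codimension $2dm(n-1)$ inside ambient spaces of different dimensions, and there is no open embedding $\Sigma^{d,m}_n\times\C^{dm(n-1)}\to\tilde{\Sigma}^d$ extending it (perturbing an $m$-tuple in the normal directions destroys the common root, so such a thickening leaves the discriminant). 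To get the required homomorphism $\tilde{H}^{2dmn-k-1}_c(\tilde{\Sigma}^d)\to\tilde{H}^{2dm-k-1}_c(\Sigma^{d,m}_n)$ dual to $(j^{d,m}_n)_*$ one needs a Gysin/Thom-collapse map along this closed embedding, compatible with the filtrations of $\SZ$ and $\tilde{\mathcal{X}}^d$ and with all differentials. The unnumbered Remark at the end of \S\ref{section: spectral sequence} only asserts an \emph{abstract} homeomorphism $(\mathcal{X}^d_k\setminus\mathcal{X}^d_{k-1})\times\C^{dm(n-1)}\cong\tilde{\mathcal{X}}^d_k\setminus\tilde{\mathcal{X}}^d_{k-1}$ and hence an abstract isomorphism of $E^1$-terms; it does not produce a map of spectral sequences realizing $(i^{\infty,m}_n)_*$ on the abutments, and an abstract isomorphism of $E^1$-pages yields nothing about a specific map (nor even about the abutments, since the differentials need not correspond). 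Until this wrong-way map is actually constructed and shown to commute with the $d^t$ and with Alexander duality, the comparison theorem cannot be invoked, and this construction is the hard geometric content your proposal treats as bookkeeping. Your surrounding remarks on convergence and on the identification $\varinjlim\Hol^*_d(S^2,\CP^{mn-1})\simeq\Omega^2_0\CP^{mn-1}$ via Theorem \ref{thm: KY6} are fine.
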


\section{Scanning maps  and the unstable results}
\label{section: scanning maps}

In this section, we prove Theorem \ref{thm: natural map}
by using the scanning maps.
Next we give the proofs of the stability results
(Theorem \ref{thm: I} and Corollary \ref{thm: II}).

\begin{definition}
For a space $X$ let $\SP^d(X)$ denote the $d$-th symmetric product
defined by the quotient space $\SP^d(X)=X^d/S_d$, where
the symmetric group $S_d$ of $d$-letters acts on $X^d$ by the
permutation of coordinates.
Note that there is a natural inclusion
$C_d(X)\subset \SP^d(X)$.
\end{definition}

\begin{remark}\label{rmk: notation}
(i)
An element of $\SP^d(X)$ may be identified with
the formal linear combination
$\alpha =\sum_{i=1}^kd_kx_i$
($\{x_i\}\in C_k(X), \ \sum_{k=1}^ld_k=d$).
We shall refer to $\alpha$
as configuration of points, the point $x_i$ having a multiplicity
$d_i$.
\par
(ii)
If $X =\C$, then  $\P^d(\C)$ can be easily identified with the space $\SP^d(\C)$ by the correspondence
$\prod_{i=1}^k(z-\alpha_i)^{d_i}\mapsto \sum_{i=1}^{k}d_i\alpha_i$.
\qed
\end{remark}

\begin{definition}
For a space $X$, define the space $\po^{d,m}_n(X)$ by
\begin{equation}\label{equ: Poly(X)}
\po^{d,m}_n(X)=
\{(\xi_1,\cdots ,\xi_m)\in\SP^d(X)^m:
(*) \}, 
\end{equation}
where the condition $(*)$ is given by
\begin{enumerate}
\item[$(*)$]
 $\cap_{i=1}^m\xi_i$ does not contain any element of multiplicity $\geq n$.
\end{enumerate}
\end{definition}

\begin{remark}\label{rmk: Poly identification}
By identifying $\P^d(\C)= \SP^d(\C)$ as in Remark \ref{rmk: notation}, 
we easily see that there is a homeomorphism
\begin{equation} \label{eq: Po}
\Po^{d,m}_n\cong\po^{d,m}_n(\C).
\end{equation}
\end{remark}

\begin{definition}
If $A\subset X$ is a closed subspace, we define
\begin{equation}
\po^{d,m}_n(X,A)=\po^{d,m}_n(X)/\sim ,
\end{equation}
where the equivalence relation \lq\lq$\sim$\rq\rq
is defined by
$$
(\xi_1,\cdots ,\xi_m)\sim
(\eta_1,\cdots ,\eta_m)
\quad \mbox{ if }\quad \xi_i\cap (X\setminus A)=\eta_i \cap (X\setminus A)
$$
for each $1\leq i\leq m$.
Therefore, points in $A$ are \lq\lq ignored\rq\rq .
When $A\not=\emptyset$,
there is a natural inclusion
$$
\po^{d,m}_n(X,A)\subset \po^{d+1,m}_n(X,A)
$$
by adding points in $A$.
Define the space $\po^{m}_n(X,A)$ by the union
\begin{equation}
\po^{m}_n(X,A)=\bigcup_{d\geq 1}\po^{d,m}_n(X,A).
\end{equation}
\end{definition}

\begin{remark}
As a set, $\po^{m}_n(X,A)$ is bijectively equivalent to
the disjoint union
$\dis \bigcup_{d\geq 1}\po^{d,m}_n(X\setminus A)$.
But these two spaces are not homeomorphic.
For example, if $X$ is connected, then $\po^{m}_n(X,A)$ is connected.
\qed
\end{remark}

We need two kinds of scanning maps.
First, we define the scanning map for configuration space of
particles.

\begin{definition}
Let
us identify
$D^2=\{x\in\C :\vert x\vert \leq 1\}$, and
let $\epsilon >0$ be a fixed sufficiently small positive
number
Then
for each $w\in\C$, let $U_{w}$ be the open set
$U_{w}=\{x\in\C: \vert x-w\vert <\epsilon\}.$
Now define the scanning map
\begin{equation}
sc^{d,m}_n:\po^{d,m}_n(\C)\to \Omega^2\po^{m}_n(D^2,S^1)
\end{equation}
as follows.
Let $\alpha =(\xi_1,\cdots ,\xi_m)\in \po^{d,m}_n(\C).$
Then let
$$
sc^{d,m}_n(\alpha):S^2=\C\cup\infty \to \po^{m}_n(D^2,S^1)
$$
denote the map given by
$$
w\mapsto
(\xi_1\cap\overline{U}_{w},\cdots ,\xi_m\cap\overline{U}_{w})
\in\po^{m}_n(\overline{U}_{w},\partial \overline{U}_{w})
\cong \po^{m}_n(D^2,S^1)
$$
for $w\in \C$,
where we use the canonical identification
$(\overline{U}_w,\partial \overline{U}_w)\cong (D^2,S^1)$.
Since $\dis\lim_{w\to\infty}sc^{d,m}_n(\alpha)(w)=(\emptyset ,\cdots ,\emptyset)$,
we set $sc^{d,m}_n(\alpha)(\infty)=(\emptyset ,\cdots ,\emptyset)$
and
we obtain the based map
$sc^{d,m}_n(\alpha)\in \Omega^2\po^{m}_n(D^2,S^1).$
\par
Since the space $\po^{d,m}_n(\C)$ is connected, the image of $sc^{d,m}_n$ is contained
in some path-component of
$\Omega^2\po^{m}_n(D^2,S^1),$
which is denoted by
$\Omega^2_d\po^{m}_n(D^2,S^1).$
Hence,  finally we obtain the map
\begin{equation}
sc^{d,m}_n:
\po^{d,m}_n(\C)\to 
\Omega^2_d\po^{m}_n(D^2,S^1).
\end{equation}
Now we identify
$\Po^{d,m}_n\cong \po^{d,m}_n(\C)$ as in (\ref{eq: Po})
and
by setting $\dis S=\lim_{d\to\infty}sc^{d,m}_n$,
we obtain {\it the scanning map}
\begin{equation}
S:
\Po^{\infty,m}_n
\to \lim_{d\to\infty}\Omega^2_d
\po^{m}_n(D^2,S^1)\simeq \Omega^2_0
\po^{m}_n(D^2,S^1).
\end{equation}
\end{definition}

\begin{theorem}[\cite{GKY2}, \cite{Se}]
\label{thm: scanning map}
If $n\geq 2$, the scanning map
$$
S=\lim_{d\to\infty}sc^{d,m}_n:\Po^{\infty,m}_n
\stackrel{\simeq}{\longrightarrow}
\Omega^2_0\po^{m}_n(D^2,S^1)
$$
is a homology equivalence.
\end{theorem}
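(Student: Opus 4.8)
The plan is to prove Theorem \ref{thm: scanning map} by realizing the scanning map $S=\varinjlim sc^{d,m}_n$ as an instance of the general group-completion / configuration-space-scanning machinery, adapting the arguments of Segal \cite{Se} and \cite{GKY2}. First I would note that $\po^m_n(D^2,S^1)$ is built from labelled configurations of particles in $D^2$ with labels in a partial monoid, and that the disjoint union $\coprod_d \po^{d,m}_n(\C)$ carries the structure of a topological monoid (or at least a $\mathcal{C}_2$-algebra), where the multiplication is given by juxtaposition of configurations living in disjoint horizontal strips of $\C$; this uses $n\geq 2$ to guarantee that the juxtaposition of two admissible $m$-tuples is again admissible (the forbidden configurations concern common roots of multiplicity $\geq n$, which are unaffected by placing configurations far apart). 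The colimit $\Po^{\infty,m}_n$ is then the homotopy-theoretic group completion of this monoid in the relevant path component.

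Next I would invoke the scanning/group-completion theorem in the form used by Segal: for a suitable partial abelian monoid $M$ of labels, the scanning map $C(\R^2;M)\to \Omega^2 C(\R^2;M)$ — equivalently $\po^{m}_n(\R^2)_{\infty}\to\Omega^2\po^m_n(D^2,S^1)$ after the standard identification of the target with based maps into the configuration space with boundary collapsed — induces a homology equivalence onto the appropriate path component after group completion. The key geometric input is that the local structure is parametrized correctly: scanning a configuration $(\xi_1,\dots,\xi_m)$ with a small disc $\overline{U}_w$ records exactly the germ of the configuration near $w$, which is an element of $\po^m_n(D^2,S^1)$, and this assignment is continuous, compatible with the monoid structure, and compatible with the stabilization maps $s^{d,m}_n$ (adding particles near infinity corresponds to adding the basepoint in the target). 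Then the standard argument — that a quasifibration-type statement holds, so that scanning on the "level-$d$" pieces assembles to a map of the colimit which is a homology equivalence onto $\Omega^2_0\po^m_n(D^2,S^1)$ — applies verbatim.

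The main obstacle, and the step requiring the most care, is verifying that the relevant map of a "restriction" type is a quasifibration (or that the appropriate scanning comparison is a homology equivalence at the level of the stabilized spaces), i.e. controlling what happens as particles collide or approach the boundary of the scanning discs, and checking that the admissibility condition $(*)$ — phrased in terms of intersections $\cap_i \xi_i$ having no element of multiplicity $\geq n$ — interacts well with the decomposition of $\C$ into small discs. Concretely one must check that the partial-monoid structure on the space of local pictures $\po^m_n(D^2,S^1)$ is "good" in Segal's sense (the set of composable pairs is open and closed under the structure maps, and the inclusions of sub-configuration-spaces are cofibrations), which is where the hypothesis $n\geq 2$ is essential: when $n=1$ the condition degrades to "no common root at all" and the monoid structure under juxtaposition fails, whereas for $n\geq 2$ juxtaposition preserves admissibility. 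Once this is in place, the homology equivalence follows from the group-completion theorem together with the identification of path components via the degree, exactly as in \cite[\S5]{Se} and \cite{GKY2}; I would cite those sources for the routine verifications rather than reproduce them.
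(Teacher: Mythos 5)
Your proposal follows essentially the same route as the paper, which for this statement simply defers to the scanning arguments of \cite[\S 3]{Se} and \cite[p. 99--100]{GKY2}; your outline (juxtaposition/partial-monoid structure on $\coprod_d\po^{d,m}_n(\C)$, the quasifibration-type restriction step, group completion, and identification of the component via degree) is exactly the expected fleshing-out of that citation. One small caveat: your assertion that juxtaposition fails to preserve admissibility when $n=1$ is not correct --- a common root of multiplicity $\geq n$ of a juxtaposed pair with disjoint supports must already occur in one of the two factors, for any $n$ --- so the hypothesis $n\geq 2$ is not what makes the monoid structure work; it merely reflects that the $n=1$ case is Segal's original theorem and is treated separately elsewhere in the paper.
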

\begin{proof}
The proof is similar to the argument of \cite[\S 3]{Se}.
Alternatively, we can prove this by using the complete similar way as in
\cite[page 99-100]{GKY2}
\end{proof}

\begin{definition}\label{def: 5.9}
(i)
Let $\mathcal{P}^d(\C)$ denote the space of 
(not necessarily monic) all
polynomials $f(z)=\sum_{i=0}^da_iz^i\in\C[z]$ of degree exactly $d$
and let $\mathcal{P}oly^{d,m}_n$ denote the space of all
$m$-tuples $(f_1(z),\cdots ,f_m(z))\in \mathcal{P}^d(\C)^m$
such that polynomials $\{f_1(z),\cdots ,f_m(z)\}$ have no common root of
multiplicity $\geq n$.
\par
(ii)
For each nonempty open set $X\subset \C$, let
$\mathcal{P}oly^{m}_n(X)$ denote the space of
all $m$-tuples $(f_1(z),\cdots ,f_m(z))$ satisfying the following
two conditions:
\begin{enumerate}
\item[(\ref{def: 5.9}.1)]
$f_i(z)\in\C[z]$ is a complex polynomial of the same degree 
and it is not identically zero
for each $1\leq i\leq m$.
\item[(\ref{def: 5.9}.2)]
Polynomials $\{f_1(z),\cdots ,f_m(z)\}$ have no common root in $X$ of multiplicity
$\geq n$.
\end{enumerate}
When $X=\C$, we write $\mathcal{P}oly^{d,m}_n=\mathcal{P}oly^{d,m}_n(\C)$.
\qed
\end{definition}

\begin{remark}
(i)
Note that $\mathcal{P}oly^{m}_n(\C)$ is bijectively equivalent to
the union $\bigcup_{d\geq 0}\mathcal{P}oly^{d,m}_n(\C)$, but these spaces are not homeomorphic because $\mathcal{P}oly^{m}_n(\C)$ is connected.
\par
(ii) It is easy to see that there are homeomorphisms
$$
\mathcal{P}^d(\C)\cong \C^*\times\P^d(\C)
\ \mbox{ and }\ 
\mathcal{P}oly^{d,m}_n(\C)\cong
\T^m \times \Po^{d,m}_n,
$$
where we set $\T^m=(\C^*)^m$.
\qed
\end{remark}
\par\vspace{2mm}\par

Next consider the scanning map for algebraic maps.

\begin{definition}
(i)
Let $U=D^2\setminus S^1=\{x\in\C :\vert x\vert <1\}$ and
define {\it the scanning map}
\begin{equation}\label{equ: scan2}
\mbox{sc}^{d,m}_n:\mathcal{P}oly^{d,m}_n\to \Map(\C, \mathcal{P}oly^{d,m}_n(U))
\end{equation}
for  $\mathcal{P}oly^{d,m}_n$
by
$$
\mbox{sc}^{d,m}_n(f_1(z),\cdots ,f_m(z))(w)=
(f_1\vert U_w,\cdots ,f_m\vert U_w)
$$
for $((f_1(z),\cdots ,f_m(z),w)\in \mathcal{P}oly^{d,m}_n\times \C$,
where
 we also use the canonical
identification $U\cong U_w$
as in the definition of the earlier scanning map.
\par\vspace{1mm}\par
(ii)
Let
$q:\mathcal{P}oly^m_n(\C)\to \po^{m}_n(D^2,S^1)$
denote the map given by assigning to an $m$-tuples of polynomials their
corresponding roots in $U$. 
\qed
\end{definition}

\begin{lemma}\label{lmm: quasi-fibration}
The map $q:\mathcal{P}oly^m_n(\C)\to \po^{m}_n(D^2,S^1)$ is a quasifibration with fibre $\T^m$.
\end{lemma}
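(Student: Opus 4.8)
\textbf{Proof proposal for Lemma \ref{lmm: quasi-fibration}.}

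The plan is to exhibit $q$ as a quasifibration by using the Dold--Thom criterion: I will produce an open cover (or a filtration by open sets) of the base $\po^{m}_n(D^2,S^1)$ over each piece of which $q$ restricts to a quasifibration, together with a deformation argument gluing the pieces. First I would identify the fibre: over a configuration $(\xi_1,\dots,\xi_m)\in\po^{m}_n(D^2,S^1)$, a point of $q^{-1}(\xi_1,\dots,\xi_m)$ is an $m$-tuple of nonzero polynomials of a common degree whose roots \emph{inside} $U$ realize exactly the prescribed configuration, while the roots \emph{outside} $U$ (i.e.\ in $\C\setminus U$, including the behaviour at the boundary circle and at infinity via the degree) are unconstrained apart from the no-common-root-of-multiplicity-$\ge n$ condition, which is automatic once the common roots in $U$ already respect it. The leading coefficients contribute a factor $(\C^*)^m=\T^m$, and the remaining freedom — placing the ``external'' roots of each $f_i$ in the contractible region $\C\setminus U$ — is itself contractible. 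So the fibre is homotopy equivalent to $\T^m$; I would in fact arrange the identification so that the fibre is literally $\T^m$ times a contractible space, or reparametrize so it is exactly $\T^m$, matching the statement.

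The key steps, in order, would be: (1) stratify $\po^m_n(D^2,S^1)$ by the combinatorial type of the configuration near the boundary $S^1$, or more simply use the standard trick of Segal: for $\delta>0$ let $V_\delta\subset\po^m_n(D^2,S^1)$ be the open subset of configurations with no point within distance $\delta$ of $S^1$, and let $V'=\po^m_n(D^2,S^1)$ with the ``collar'' treated separately. (2) Over $V_\delta$ the map $q$ is a genuine fibre bundle (local triviality follows because one can continuously choose polynomials having prescribed interior roots and fixed external data, using partitions of unity on the space of root-configurations bounded away from $S^1$), so it is a quasifibration there. (3) Construct a deformation retraction of $\po^m_n(D^2,S^1)$ onto $V_\delta$, covered by a compatible deformation upstairs on $\mathcal{P}oly^m_n(\C)$ that pushes roots away from the boundary circle toward the interior; this is the ``distributing points'' homotopy of Segal \cite{Se} adapted to $m$ tuples and the multiplicity-$n$ constraint. (4) Apply the Dold--Thom gluing lemma (a map that is a quasifibration over each set of an open cover closed under the deformation, and compatibly so, is a quasifibration) to conclude. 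Throughout I would lean on the fact, already used implicitly in \cite{Se} and \cite{GKY2}, that the condition ``no common root of multiplicity $\ge n$'' is open and is preserved under pushing roots around without colliding incompatible multiplicities.

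I expect the main obstacle to be step (3): checking that the boundary-pushing deformation on the configuration space $\po^m_n(D^2,S^1)$ genuinely lifts to a deformation of $\mathcal{P}oly^m_n(\C)$ that is continuous and respects the no-common-root-of-multiplicity-$\ge n$ condition at \emph{every} intermediate time. The subtlety is that moving roots of the several $f_i$ simultaneously could momentarily create a forbidden common root of high multiplicity; I would handle this by moving the roots of $f_1,\dots,f_m$ along slightly different radial trajectories (a generic perturbation), or by first separating the $m$ factors' root-sets and only then applying the radial push, so that no new coincidences of multiplicity $\ge n$ are introduced. Once this lifting is in place, the rest is the routine Dold--Thom machinery exactly as in \cite[\S3]{Se} and \cite[pp.~99--100]{GKY2}, and the identification of the fibre with $\T^m$ is immediate from the leading-coefficient decomposition together with the contractibility of the external-root data.
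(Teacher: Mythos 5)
Your overall plan (Dold--Thom criterion plus a lifted deformation of the base, in the style of Segal) is the right family of argument, but the filtration you choose makes the crucial step fail. In the quotient space $\po^{m}_n(D^2,S^1)$ a configuration with a point at distance $\epsilon$ from $S^1$ converges, as $\epsilon\to 0$, to the configuration with that point \emph{deleted}; consequently the map ``push every root radially inward until it lies in $V_\delta$'' sends two arbitrarily close configurations (one with an extra point near $S^1$, and its boundary limit without it) to configurations a definite distance apart, so your step (3) is not continuous and cannot be a deformation retraction onto $V_\delta$. Independently of this, even if $q$ were a quasifibration over each $V_\delta$, the Dold--Thom criterion for an increasing union of distinguished open sets requires every compact subset of the base to be contained in some $V_\delta$, and this fails: a convergent sequence of configurations with a point escaping through $S^1$, together with its limit, is compact but meets the complement of every $V_\delta$. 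Finally, your fibre identification leans on the contractibility of $\C\setminus U$, which is false ($\C\setminus U$ is an annulus, homotopy equivalent to $S^1$); the identification of the fibre with $\T^m$ genuinely requires the analysis carried out for $m=1$ in \cite[p.~101]{GKY2}.

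The workable version --- and the one the paper uses --- filters the base by the \emph{number of points} (in the first coordinate, combined with an induction on $m$, the case $m=1$ being \cite[p.~101]{GKY2}): over each successive difference of this filtration $q$ is a trivial fibration, and the Dold--Thom ``attaching'' deformation pushes points \emph{outward} through $S^1$ --- continuous on the quotient precisely because exiting points are ignored --- covered upstairs by multiplication by a linear factor $z-\alpha$ with $\alpha\in\C\setminus U$; moving $\alpha$ continuously to $1$ shows that the attaching map induces a homotopy equivalence of fibres. Note also that your worry about creating forbidden common roots of multiplicity $\geq n$ at intermediate times is resolved by applying a single injective ambient isotopy to all roots of all the $f_i$ simultaneously, which can never increase common multiplicities; moving the roots of the different $f_i$ along different trajectories, as you propose, is what would risk creating new coincidences.
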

\begin{proof}
This may be proved by using the well-known criterion of Dold-Thom
as in the proof of \cite[Lemma 3.3]{Se}.
In fact, we can prove this by using the induction on the number $m$.
The case $m=1$ is proved in \cite[page 101]{GKY2}.
Now assume that the case $m-1$ is true and filter the base space
$\po^{d,m}_n(D^2,S^1)$ by the points of $U$ in the
first coordinate.
Note that $q$ is a trivial fibration over each
the successive difference of the filtration.
Then the Dold-Thom "attaching map"
has the effect of multiplying polynomials with no root in $U$ by a
fixed polynomial $z-\alpha$, where $\alpha\in \C\setminus U$.
Since $\alpha$ may be moved continuously to $1$, we can show the assertion
in the same way as the case $m=1$.
\end{proof}

\begin{definition}
(i)
Let
$ev_0:\mathcal{P}oly^m_n(U)\to \C^{mn}\setminus \{{\bf 0}\}$
denote the evaluation map at $z=0$ given by
\begin{eqnarray*}
ev_0(f_1(z),\cdots ,f_m(z))
&=&
(\textit{\textbf{f}}_1(0),\textit{\textbf{f}}_2(0)\cdots ,
\textit{\textbf{f}}_m(0))
\end{eqnarray*}
for $(f_1(z),\cdots ,f_m(z))\in \mathcal{P}oly^{d,m}_n(U)$.
\par
(ii) Let $G$ be a group and $X$ a $G$-space.
Then we denote by $X//G$ the homotopy quotient of $X$ by $G$,
$X//G=EG\times_{G}X$.
\qed
\end{definition}

\begin{remark}
Let $\T^m=(\C^*)^m$ and
consider the natural $\T^m$-actions on the spaces
$\mathcal{P}oly^m_n(U)$ 
and $\C^{mn}\setminus \{{\bf 0}\}$ 
given by the usual coordinate-wise multiplications.
Then it is easy to see that $ev_0$ is a $\T^m$-equivariant map.
\end{remark}

\begin{lemma}\label{lmm: evaluation0}
The map
$ev_0:\mathcal{P}oly^m_n(U)\to \C^{mn}\setminus \{{\bf 0}\}$
is a  homotopy equivalence.
\end{lemma}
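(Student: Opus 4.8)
**Proof plan for Lemma 5.15 (that $ev_0:\mathcal{P}oly^m_n(U)\to \C^{mn}\setminus\{\mathbf{0}\}$ is a homotopy equivalence).**

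The plan is to exhibit an explicit deformation retraction. First I would observe that for an $m$-tuple $(f_1(z),\dots,f_m(z))\in\mathcal{P}oly^m_n(U)$, the jet at $z=0$ is recorded by the vector
$$
ev_0(f_1,\dots,f_m)=(\textbf{\textit{f}}_1(0),\dots,\textbf{\textit{f}}_m(0))\in\C^{mn},
$$
and the condition defining $\mathcal{P}oly^m_n(U)$ says precisely that this vector is nonzero exactly when $0$ is \emph{not} a common root of multiplicity $\geq n$ — which, since the tuple is $z=0$-centred here, is the relevant constraint. So $ev_0$ is well defined and surjective: given any target vector, a polynomial tuple of sufficiently high degree realizing that jet and having no common root of multiplicity $\geq n$ in $U$ can be written down (e.g. start from the Taylor data prescribed by the vector and, if necessary, perturb higher-order coefficients, or simply take the unique polynomials of degree $< n$ with that jet and note degree can be raised freely by the stabilization/identification conventions already in force).

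The key step is to contract the fibres. I would scale the higher-order part of each $f_i$ toward the minimal jet: writing $f_i(z)=p_i(z)+z^n g_i(z)$ where $p_i$ is the degree-$<n$ truncation (so $ev_0$ depends only on $(p_1,\dots,p_m)$ through a linear isomorphism onto $\C^{mn}$), define $H_t(f_1,\dots,f_m)=(p_1+tz^ng_1,\dots,p_m+tz^ng_m)$ for $t\in[0,1]$. At $t=1$ this is the identity; at $t=0$ it lands in the subspace of tuples of polynomials of degree $<n$ (suitably interpreted within the degree-$d$ conventions), on which $ev_0$ is a homeomorphism onto $\C^{mn}\setminus\{\mathbf{0}\}$. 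I must check $H_t$ stays inside $\mathcal{P}oly^m_n(U)$, i.e. no common root of multiplicity $\geq n$ appears in $U$ for any $t$; the point is that multiplying out, a common root of multiplicity $\geq n$ at a point $\alpha\in U$ would force $\textbf{\textit{f}}_i(\alpha)=\mathbf 0$ for all $i$, and for $\alpha=0$ this is exactly ruled out by $ev_0\neq\mathbf 0$, which $H_t$ preserves, while for $\alpha\neq 0$ one can arrange the homotopy to respect the no-common-high-multiplicity-root condition by the same normal-form argument used in Lemma \ref{lmm: quasi-fibration} (filtering by position of roots in $U$). Concretely it is cleanest to first push all common roots in $U$ away, as in the proof of Lemma \ref{lmm: quasi-fibration}, and only then contract the residual jet data.

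The main obstacle I anticipate is verifying that the straight-line homotopy $H_t$ genuinely never leaves $\mathcal{P}oly^m_n(U)$: the "no common root of multiplicity $\geq n$ in $U$" condition is an open but somewhat delicate condition, and a naive linear contraction of coefficients could in principle create a bad common root at an intermediate parameter. The remedy is to do the deformation in two stages — first a Dold--Thom-style move (exactly as in Lemma \ref{lmm: quasi-fibration}) sliding any common roots lying in $U$ out to $\partial U$ and beyond, making the tuple "generic at $0$" while keeping $ev_0$ fixed up to the $\T^m$-action, and then the coefficient-scaling $H_t$ on a tuple that now has no common root near $0$ at all, so that the only constraint to monitor is $ev_0\neq\mathbf 0$, which is manifestly preserved. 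Assembling these two homotopies gives the deformation retraction onto the copy of $\C^{mn}\setminus\{\mathbf 0\}$ sitting inside $\mathcal{P}oly^m_n(U)$ as the low-degree jets, and since $ev_0$ restricted there is the identity (a homeomorphism), $ev_0$ is a homotopy equivalence.
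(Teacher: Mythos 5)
The paper's own proof of this lemma is a two-line citation (the case $m=1$ to \cite[Theorem 2.4]{GKY2} and $m\geq 2$ to \cite[Prop.\ 1]{ha}), so you are attempting to supply an argument the authors do not spell out; unfortunately your deformation retraction breaks down at two points. The first is that the subspace you propose to retract onto does not exist. The condition $ev_0\neq\mathbf{0}$ constrains only the full $mn$-vector, not each block $\textit{\textbf{f}}_i(0)$ separately: the tuple $(z^n,\,z^n+1)\in\mathcal{P}oly^{m}_n(U)$ (for $m=2$) lies in the space, has $\textit{\textbf{f}}_1(0)=\mathbf{0}$, and its degree-$(<n)$ truncation is $(0,1)$, whose identically zero first entry is excluded by condition (\ref{def: 5.9}.1). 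So $H_0$ leaves the space, the truncations $p_i$ in general have unequal degrees (violating the same-degree requirement), and $ev_0$ restricted to low-degree jets is neither defined on, nor a homeomorphism onto, all of $\C^{mn}\setminus\{\mathbf{0}\}$. This degeneracy is precisely why the $\T^m$-action on $\C^{mn}\setminus\{\mathbf{0}\}$ is not free and a homotopy quotient is needed in the proof of Theorem \ref{thm: natural map}, so it cannot be dismissed as a boundary case.

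The second gap is the one you flagged yourself, and your repair does not close it. The straight-line homotopy genuinely exits the space at interior times: with $n=2$ and every $f_i$ equal to $f(z)=1-512\,z^2(z-\tfrac{1}{2})^2$, one has $p_i=1$, the tuple lies in $\mathcal{P}oly^{m}_2(U)$ (the critical values of $f$ are $1,1,-1$, never $0$, so $f$ has no multiple root), yet $H_{1/2}$ has a common root of multiplicity $2$ at $z=\tfrac{1}{4}\in U$. Your proposed first stage --- continuously sliding ``any common roots lying in $U$'' out past $\partial U$ --- is not a well-defined operation on $\mathcal{P}oly^{m}_n(U)$: the common-root locus of an $m$-tuple does not vary continuously with the tuple (it can vanish under arbitrarily small perturbations), so there is no canonical global isotopy of this kind, and the Dold--Thom move invoked in Lemma \ref{lmm: quasi-fibration} is applied only to the attaching maps of a filtration of the base of a quasifibration, not as a deformation of the whole space; moreover it would not preserve the jet at $0$. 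An honest proof needs a different mechanism, e.g.\ Haefliger's rescaling of the domain variable (which shrinks $U$ and automatically preserves the no-bad-root condition) adapted to keep track of the full $(n-1)$-jet, or a quasifibration argument for $ev_0$ itself.
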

\begin{proof}
If $m=1$, this is proved in \cite[Theorem 2.4 (page 102-103)]{GKY2}.
By using the same method with \cite[Prop. 1]{ha}, we can show the assertion for $m\geq 2$.
\end{proof}

Now we can prove Theorem \ref{thm: natural map}.

\begin{proof}[Proof of Theorem \ref{thm: natural map}]
Note that $\T^m$ does not act on 
$(\C^*)^{mn}\setminus \{{\bf 0}\}$  freely.
Hence,
by using its homotopy quotient, we have the  commutative diagram
{\small 
$$
\begin{CD}
\mathcal{P}oly^{d,m}_n 
@>\mbox{sc}^{d,m}_n>>
\Map (\C,\mathcal{P}oly^m_n(U))
@>ev_0>\simeq> 
\Map (\C, \C^{mn}\setminus\{{\bf 0}\})
\\
@V{q_1}VV @V{q_2}VV @V{q_3}VV
\\
\mathcal{P}oly^{d,m}_n/\T^m 
@>>> 
\Map (\C,\mathcal{P}oly^m_n(U)/\T^m)
@>\widetilde{ev}_0>\simeq> 
\Map (\C, (\C^{mn}\setminus \{{\bf 0}\})//\T^{m})
\\
@V{\cong}VV @V{q^{\p}}V{\simeq}V @.
\\
\Po^{d,m}_n @>sc^{d,m}_n>> \Map (\C, \po^m_n(D^2,S^1)) @.
\end{CD}
$$
}
\newline
where the vertical maps $q_i$ $(i=1,2,3)$
are induced maps from 
the corresponding group actions, and 
 $q^{\p}$ is induced map from the map $q$.
\par
Note
that the map $q^{\p}$ is a  homotopy equivalence
by  Lemma \ref{lmm: quasi-fibration}.
Since the map $ev_0$ is $\T^m$-equivariant and 
a homotopy equivalence by Lemma \ref{lmm: evaluation0},
the map $\widetilde{ev}_0$ is a homotopy equivalence.
\par\vspace{1mm}\par
Now consider the map $\gamma$ given by the second row of the above diagram
after imposing the base point condition at $\infty$.
If $d\to\infty$, $sc^{d,m}_n$ is a homology equivalence
by Theorem \ref{thm: scanning map}. So
the map $\gamma$ 
is a homotopy equivalence if $d\to\infty$.
However,  since the map $q_3$ induces a homotopy equivalence
$\Omega^2S^{2mn-1}\simeq
\Omega^2_0((\C^{mn}\setminus \{{\bf 0}\})//\T^{m})$
(after imposing the base point condition at $\infty$),
this map coincides the map $i^{\infty,m}_n$
(if $d\to\infty)$
up to homotopy equivalence.
Hence, $i^{\infty,m}_n$ is a homology equivalence.
\end{proof}

Next, we shall prove the unstable results
(Theorem \ref{thm: I} and Corollary \ref{thm: II}).
For this purpose, remark the following two results.

\begin{lemma}\label{lmm: abelian}
If $m\geq 2$ and $n\geq 2$, the space $\Po^{d,m}_n$ is 
simply connected.
\end{lemma}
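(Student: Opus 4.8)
The plan is to show that $\Po^{d,m}_n$ is connected (which is already noted in the text, since it is identified with $\po^{d,m}_n(\C)$ and $\C$ is connected) and then that its fundamental group is trivial. The most direct route is to exploit the natural map $i^{d,m}_n: \Po^{d,m}_n \to \Omega^2_d\CP^{mn-1} \simeq \Omega^2 S^{2mn-1}$ together with the homological stability established in Theorem \ref{thm: stab1}. Since $mn \geq 4$ when $m,n\geq 2$, the target $\Omega^2 S^{2mn-1}$ has trivial $\pi_1$ (indeed it is $(2mn-4)$-connected, and $2mn-4 \geq 4$), so the target carries no fundamental-group information; the argument must instead be intrinsic to $\Po^{d,m}_n$.

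First I would handle the case $d < n$ and $d = n$ directly: by the homeomorphisms in (\ref{eq: Poly}), $\Po^{d,m}_n \cong \C^{dm}$ is contractible when $d<n$, and when $d=n$ it is $\C^{mn}\setminus\{(x,\dots,x):x\in\C\}$, which is the complement of a complex line (real codimension $2mn - 2 \geq 6$) in $\C^{mn}$, hence simply connected. For the general case $d \geq n$, I would argue by the following reduction. The stabilization map $s^{d,m}_n:\Po^{d,m}_n\to\Po^{d+1,m}_n$ is a homology equivalence through a range $D(d;m,n)$ that tends to infinity; more importantly, I expect one can show it is $\pi_1$-surjective (adding points near infinity does not create new loops) and that for $d$ in the range where $\lfloor d/n\rfloor$ is unchanged it is even a $\pi_1$-isomorphism — but this only propagates information, it does not start it. The cleanest approach is therefore to compute $\pi_1$ directly from the Vassiliev spectral sequence of Section \ref{section: spectral sequence}: by Corollary \ref{crl: Er}(i), the groups $E^{1;d}_{k,s}$ contributing to $H_1(\Po^{d,m}_n,\Z)$ require $s - k = 1$ with $s \geq 2(mn-1)k$ and $1\leq k\leq \lfloor d/n\rfloor$ (or $(k,s)=(0,0)$), forcing $1 \geq 2(mn-1)k - k = (2mn-3)k \geq 5$ when $k\geq 1$, a contradiction; hence $H_1(\Po^{d,m}_n,\Z) = 0$. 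Combined with connectedness, this shows $\pi_1(\Po^{d,m}_n)$ is perfect.

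To upgrade "perfect" to "trivial" I would use the scanning/group-completion picture: the disjoint union $\coprod_{d\geq 0}\Po^{d,m}_n$ carries a $C_2$-operad action (as recalled in the introduction for the special cases, and available here via the multiplication of monic polynomials), so $\pi_0$ of the telescope is a group and each component is an $H$-space up to the stabilization; an $H$-space has abelian $\pi_1$, and a perfect abelian group is trivial. Concretely: the homology equivalence $i^{\infty,m}_n$ of Theorem \ref{thm: natural map} identifies $H_*(\Po^{\infty,m}_n)$ with $H_*(\Omega^2 S^{2mn-1})$, which is simply connected; since $\Po^{\infty,m}_n$ is a telescope of the $\Po^{d,m}_n$ along maps that are homology isomorphisms in a growing range, and since each $\Po^{d,m}_n$ has $H_1 = 0$ by the spectral sequence computation above, the Hurewicz theorem (in its version requiring only $\pi_1$ abelian, supplied by the $H$-space structure on the limit) forces $\pi_1(\Po^{d,m}_n) = 0$ in the stable range, and then downward along $\pi_1$-isomorphic stabilization maps one recovers the result for all $d$.

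The main obstacle I anticipate is precisely the last step — passing from the vanishing of $H_1$ to the vanishing of $\pi_1$, i.e. ruling out a nontrivial perfect fundamental group. The honest fix is to establish the $H$-space (or at least homotopy-commutative $H$-space) structure on the stabilized space $\Po^{\infty,m}_n$ carefully, so that $\pi_1$ is abelian there, whence perfect-plus-abelian gives $0$; and separately to check that $s^{d,m}_n$ induces a $\pi_1$-epimorphism for every $d$ (a direct geometric argument about loops of configurations, letting the "new" points run off to infinity) so that the conclusion descends from the stable range to all $d \geq n$. I expect this to parallel the corresponding arguments in \cite{GKY2} and \cite{KY6}, with the role of $m\geq 2, n\geq 2$ entering only through the numerical inequality $(2mn-3)k \geq 5$ that kills the relevant $E^1$-terms.
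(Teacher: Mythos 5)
Your computation of $H_1(\Po^{d,m}_n,\Z)=0$ from Corollary \ref{crl: Er} is exactly what the paper does, and you correctly identified the real difficulty: passing from $H_1=0$ to $\pi_1=0$, i.e.\ showing $\pi_1$ is abelian rather than merely perfect. But your proposed fix for that step has a genuine gap. The $H$-space/$C_2$-structure gives you abelian $\pi_1$ only for the stabilized space $\Po^{\infty,m}_n$ (or for the group completion), and to descend triviality of $\pi_1$ from the colimit to a finite stage you would need the map $\pi_1(\Po^{d,m}_n)\to\pi_1(\Po^{\infty,m}_n)$ to be \emph{injective}. What you propose to verify is that the stabilization maps are $\pi_1$-\emph{epimorphisms}, which gives surjectivity of that map and tells you nothing about a possible nontrivial perfect kernel. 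Likewise, Theorem \ref{thm: stab1} only gives homology equivalences (through a range), which do not control $\pi_1$ unless you already know $\pi_1$ is abelian; and the upgrade to homotopy equivalences (Theorem \ref{crl: stabilization}) is proved in the paper \emph{using} this lemma, so invoking it here would be circular.

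The missing ingredient is an \emph{unstable, geometric} proof that $\pi_1(\Po^{d,m}_n)$ is abelian for each fixed $d$. The paper supplies this by the braid-representation argument of \cite[\S 5, Appendix]{GKY1}: a loop in $\Po^{d,m}_n$ is a motion of $m$ coloured configurations of roots, and because $m\geq 2$ and $n\geq 2$ the forbidden locus (a common root of all $m$ polynomials of multiplicity $\geq n$) has high enough codimension that strings of different colours can be pushed through one another, so the elementary generators of $\pi_1$ commute. Once $\pi_1$ is abelian, $\pi_1\cong H_1=0$ and you are done — no stabilization or $H$-space structure is needed. I would replace your last two paragraphs with this direct argument (your treatment of the cases $d\leq n$ via (\ref{eq: Poly}) is fine but not needed once the general argument is in place).
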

\begin{proof}
Assume that $m\geq 2$ and $n\geq 2$.
Then 
by using the braid representation as in \cite[\S 5. Appendix]{GKY1},
any different kinds of strings can  pass through and
one can show that 
$\pi_1(\Po^{d,m}_n)$ is an abelian group.
Hence, there is an isomorphism
$\pi_1(\Po^{d,m}_n)\cong H_1(\Po^{d,m}_n,\Z)$.
Now consider the spectral sequence (\ref{SS}).
Then it follows from Corollary  \ref{crl: Er} that
$H_k(\Po^{d,m}_n,\Z)=0$
for any $1\leq k\leq 2mn-5$.
Thus, the space $\Po^{d,m}_n$ is 
simply connected.
\end{proof}

\begin{theorem}\label{crl: stabilization}
If $m\geq 2$ and $n\geq 2$, the stabilization map
$$
s^{d,m}_n:\Po^{d,m}_n\to \Po^{d+1,m}_n
$$
is a homotopy equivalence if
$\lfloor \frac{d}{n}\rfloor =\lfloor \frac{d+1}{n}\rfloor$
and a homotopy equivalence through dimension $D(d;m,n)$
otherwise.
\end{theorem}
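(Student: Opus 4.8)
The plan is to combine the homological stability theorem for the stabilization maps (Theorem~\ref{thm: stab1}) with the simple-connectivity result (Lemma~\ref{lmm: abelian}), upgrading the homology statement to a homotopy statement. First I would observe that by Lemma~\ref{lmm: abelian} both $\Po^{d,m}_n$ and $\Po^{d+1,m}_n$ are simply connected (using the hypothesis $m\geq 2,\ n\geq 2$), and that the stabilization map $s^{d,m}_n$ is a map between simply connected spaces. Since a homology equivalence (resp.\ a homology equivalence through dimension $D$) between simply connected spaces is automatically a homotopy equivalence (resp.\ a homotopy equivalence through dimension $D$) by the relative Hurewicz theorem / Whitehead's theorem applied to the mapping cone, the two clauses of the present statement follow immediately from the corresponding two clauses of Theorem~\ref{thm: stab1}.

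More concretely, in the case $\lfloor d/n\rfloor = \lfloor (d+1)/n\rfloor$, Theorem~\ref{thm: stab1} tells us $s^{d,m}_n$ is a homology isomorphism in all degrees. Replacing $s^{d,m}_n$ by an inclusion up to homotopy and forming the pair $(\Po^{d+1,m}_n,\Po^{d,m}_n)$, the long exact homology sequence shows $H_k$ of the pair vanishes for all $k$; since both spaces are simply connected, the pair is $1$-connected, so the relative Hurewicz theorem gives $\pi_k$ of the pair vanishes for all $k$, hence $s^{d,m}_n$ is a weak homotopy equivalence. As these spaces have the homotopy type of CW complexes (they are semi-algebraic, indeed open subsets of affine space), Whitehead's theorem upgrades this to a genuine homotopy equivalence. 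In the case $\lfloor d/n\rfloor < \lfloor (d+1)/n\rfloor$, the same argument applied degree by degree up to $D(d;m,n)$ shows $\pi_k$ of the pair vanishes for $k\leq D(d;m,n)$, i.e.\ $s^{d,m}_n$ is a homotopy equivalence through dimension $D(d;m,n)$.

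The only real subtlety — and the step I would be most careful about — is making sure the hypotheses of the relative Hurewicz theorem are genuinely in force: one needs the pair to be simply connected, which is exactly where Lemma~\ref{lmm: abelian} is indispensable (without it, the $n=2$ or low-$d$ cases could have nonabelian $\pi_1$ and the homology-to-homotopy upgrade would fail). I would also note explicitly that $D(d;m,n) = (2mn-3)(\lfloor d/n\rfloor+1)-1 \geq 2mn-4 \geq 0$ when $mn\geq 2$, so the "through dimension $D(d;m,n)$" clause is nonvacuous and in particular covers $\pi_1$ and $\pi_2$. Everything else is formal: the statement is essentially a corollary of Theorem~\ref{thm: stab1} and Lemma~\ref{lmm: abelian} together with standard obstruction-theoretic facts about maps of simply connected CW complexes, so no new computation is required.
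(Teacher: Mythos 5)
Your proposal is correct and is exactly the paper's argument: the paper deduces this statement from Theorem~\ref{thm: stab1} together with Lemma~\ref{lmm: abelian}, leaving the homology-to-homotopy upgrade (relative Hurewicz plus Whitehead) implicit, which you have simply spelled out.
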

\begin{proof}
This follows from Theorem \ref{thm: stab1}
and Lemma \ref{lmm: abelian}.
\end{proof}

Now we can complete the proof of Theorem \ref{thm: I}
and Corollary \ref{thm: II}.

\begin{proof}[Proof of Theorem \ref{thm: I}]
Since the case $(m,n)=(3,1)$ and the case $(m,n)=(1,3)$ were
already proved in Theorem \ref{thm: KY7} and Theorem \ref{thm: KY6},
without loss of generalities, we may assume that $m\geq $ and $n\geq 2$.
It follows from Theorem \ref{thm: stab1} and Theorem
\ref{thm: natural map} that
the map $i^{d,m}_n:\Po^{d,m}_n\to \Omega^2_0\CP^{mn-1}$
is a homology equivalence through dimension $D(d;m,n)$.
However, since $\Po^{d,m}_n$ and $\Omega^2_0\CP^{mn-1}\simeq
\Omega^2S^{2mn-1}$ are simply connected,
the map $i^{d,m}_n$ is indeed a homotopy equivalence through dimension
$D(d;m,n)$.
\end{proof}

\begin{proof}[Proof of Corollary \ref{thm: II}]
By the same reason as the proof of Theorem \ref{thm: I},
it suffices to prove the assertion when $m\geq 2$ and $n\geq 2$.
By the diagram (\ref{CD: stab0}), it is easy to see that
the following diagram is commutative (up to homotopy).
$$
\begin{CD}
\Po^{d,m}_n @>i^{d,m}_n>> \Omega^2_d\CP^{mn-1}
\\
@V{j^{d,m}_n}VV  \Vert @.
\\
\Hol_d^*(S^2,\CP^{mn-1}) @>i_d^{\p}>>
\Omega^2_d\CP^{mn-1}
\end{CD}
$$
It follows from Theorem \ref{thm: KY6}  that $i_d^{\p}$ is a homotopy equivalence through dimension
$(2mn-3)(d+1)-1$.
Moreover, by Theorem \ref{thm: I} we know that
$i^{d,m}_n$ is a homotopy equivalence through dimension $D(d;m,n)$.
Since $D(d;m,n)<(2mn-3)(d+1)-1$,
it follows from  above diagram that the map $j^{d,m}_n$ is a homotopy equivalence through dimension $D(d;m,n)$.
\end{proof}

\section{$C_2$-structures}\label{section: III}

In this section we shall prove Theorem \ref{thm: III}
and Theorem \ref{thm: IV}.

\begin{definition}
Let 
$J=(0,1)$ be an open unit disk in $\R$.
Recall that {\it a little $2$-cube} $c$ means an affine embedding
$c:J^2\to J^2$ with parallel axes.
\par
(i)
For each integer $j\geq 1$, let $C_2(j)$ denote the space consisting of all
$j$-tuples $(c_1,\cdots ,c_j)$ of little $2$-cubes such that
$c_k(J^2)\cap c_i(J^2)=\emptyset$ if $k\not= i$.
\par
(ii)
Let $\alpha_0:\C\stackrel{\cong}{\longrightarrow}J^2$.
If $S_j$ denotes the symmetric group of
$j$-letters,
it is easy to see that it acts on the spaces
$C_2(j)$ and $\Po^{d.m}_n$ by the permutation of coordinates
in a natural way.
Then we identify $\C =\R^2$ and
define {\it the structure map}
$
\mathcal{I}_j:C_2(j)\times_{S_j}(\Po^{d,m}_n)^j\to
\Po^{dj,m}_n
$
by
\begin{equation}
\mathcal{I}_j((c_1,\cdots ,c_j),(f_1,\cdots ,f_j))=
(\prod_{k=1}^jc_k(f_{1;k}(z)),\cdots ,\prod_{k=1}^jc_k(f_{m;k}(z)))
\end{equation}
for $(c_1,\cdots ,c_j)\in C_2(j)$ and
$f_k=(f_{i;k}(z),\cdots ,f_{m;k}(z))\in \Po^{d,m}_n$ 
($1\leq k\leq j$), where 
we set
\begin{equation}
c_k(f(z))=\prod_{i=1}^d\big(z-c_k\circ\alpha_0(a_i)\big)
\quad
\mbox{if }f(z)=\prod_{i=1}^d(z-a_i)\in \P^d(\C).
\end{equation}
\par
(iii)
Similarly,
let $c_*=(c_{1},c_2)\in C_2(2)$
be any fixed element and define {\it the loop product}
$
\mu_{d_1,d_2} :\Po^{d_1,m}_n\times \Po^{d_2,m}_n\to \Po^{d_1+d_2,m}_n
$
by
\begin{equation}
\mu_{d_1,d_2} (f,g)=
(c_1(f_1(z))c_2(g_1(z)),\cdots
,c_1(f_m(z))c_2(g_m(z)))
\end{equation}
for $(f,g)=\big((f_1(z),\cdots ,f_m(z)),(g_1(z),\cdots ,g_m(z))\big)
\in \Po^{d_1,m}_n\times \Po^{d_2,m}_n$.
\end{definition}

\begin{remark}
(i)
It is easy to see  that $\mu_{d_1,d_2} (f,g)=\mathcal{I}_2(c_*;f,g)$ if $d_1=d_2=d$.
\par
(ii)
Let $\Po^{0,m}_n=\{*_0\}$ and let 
$\Po^{m}_n$ denote the disjoint union
\begin{equation}
\mbox{Poly}^{m}_n=\coprod_{d\geq 0}\Po^{d,m}_n.
\end{equation}
If we set $\mu_{d,0}(f,*_0)=\mu_{0,d}(*_0,f)=f$ for any
$f\in \Po^{d,m}_n$, it is easy to see that
$\mbox{Poly}^{m}_n$ is a homotopy associative H-space with unit
$*_0$, and
we can easily see that
$\{\mbox{Poly}^{m}_n,\mathcal{I}_j\}_{j\geq 1}$ is a $C_2$-operad space.
Thus,
by using the group completion Theorem and Theorem \ref{thm: I},
we see that
there is a homotopy equivalence
\begin{equation}\label{eq: group completion}
\Omega B(\mbox{Poly}^{m}_n)\simeq
\Omega^2\CP^{mn-1}\simeq \Z \times \Omega^2S^{2mn-1}.
\quad
\qed
\end{equation}
\end{remark}

\begin{definition}
Let 
\begin{eqnarray*}
*:&&\Hol_{d_1}^*(S^2,\CP^{mn-1})\times
\Hol_{d_2}^*(S^2,\CP^{mn-1})\to \Hol_{d_1+d_2}^*(S^2,\CP^{mn-1})
\\
&&\mbox{and}
\\
\mathcal{I}:&&C_2(j)\times_{S_j}\Hol_{d}^*(S^2,\CP^{mn-1})^j
\to \Hol_{dj}^*(S^2,\CP^{mn-1})
\end{eqnarray*}
denote the loop product and the
$C_2$-structure map 
defined in
\cite[(4.8)]{BM} and 
\cite[(4.10)]{BM}, respectively.
\end{definition}

It is easy to see that the above definitions of the loop products and the structure maps
are completely analogous to those of \cite[(4.8), (4.10)]{BM}, and we have the following:

\begin{lemma}\label{lmm: C2}
\begin{enumerate}
\item[$\I$]
The following two diagrams are homotopy commutative.
$$
\begin{CD}
\Po^{d_1,m}_n\times \Po^{d_2,m}_n 
@>{\mu_{d_1,d_2}}>> \Po^{d_1+d_2,m}_n
\\
@V{j^{d_1,m}_n\times j^{d_2,m}_n}VV @V{j^{d_1+d_2,m}_n}VV
\\
\Hol_{d_1}^*(S^2,\CP^{mn-1})
\times
\Hol_{d_2}^*(S^2,\CP^{mn-1})
@>*>> \Hol_{d_1+d_2}^*(S^2,\CP^{mn-1})
\end{CD}
$$
$$
\begin{CD}
C_2(j)\times_{S_j}(\Po^{d,m}_n)^j @>{\mathcal{I}_j}>> \Po^{dj,m}_n
\\
@V{1\times (j^{d,m}_n)^j}VV @V{j^{dj,m}_n}VV
\\
C_j(2)\times_{S_j} \Hol_d^*(S^2,\CP^{mn-1})^j 
@>\mathcal{I}>>
\Hol_{dj}^*(S^2,\CP^{mn-1})
\end{CD}
$$
\item[$\II$]
The map
$$
\dis\coprod_{d\geq 0}i^{d,m}_n:\mbox{\rm Poly}^m_n=\coprod_{d\geq 0}\Po^{d,m}_n \to \coprod_{d\in\Z}\Omega^2_d\CP^{mn-1}=\Omega^2\CP^{mn-1}
$$
is a $C_2$-map up to homotopy equivalence.
\end{enumerate}
\end{lemma}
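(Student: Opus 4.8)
The plan is to reduce everything to the fact that the jet map $j^{d,m}_n$ is, up to a homeomorphism of its target, induced by the "substitution of polynomials" construction that defines both the loop products $\mu$ and $*$ and both structure maps $\mathcal{I}_j$ and $\mathcal{I}$. First I would recall from Definition \ref{dfn: Polydmn} that, under the identification $\P^d(\C)^m \cong \SP^d(\C)^m$ (Remark \ref{rmk: notation}), multiplying each coordinate polynomial $f_k$ by $c_k(-)$ amounts to translating/shrinking the underlying configuration of roots by the little $2$-cube $c_k$. The key observation is that the operation $f_k \mapsto \textit{\textbf{f}}_k = (f_k, f_k+f_k', \dots, f_k+f_k^{(n-1)})$ is compatible with products of polynomials in the appropriate sense: although $(fg)^{(t)} \neq f^{(t)}g^{(t)}$, the point is that the $mn$-tuple $(\textit{\textbf{f}}_1,\dots,\textit{\textbf{f}}_m)$ has, as its common zero set, exactly the set of common roots of $f_1,\dots,f_m$ of multiplicity $\geq n$, and this set behaves correctly under multiplication supported on disjoint little cubes. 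So for the structure map $\mathcal{I}_j$, the product $\prod_k c_k(f_{i;k})$ has its high-multiplicity common roots precisely the union over $k$ of the images under $c_k$ of the high-multiplicity common roots of $f_k$, because the little cubes have disjoint images; this is exactly what the Boardman–Vogt structure map $\mathcal{I}$ of \cite[(4.10)]{BM} does to the holomorphic side under $j^{dj,m}_n$.

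Concretely, I would argue as follows. For part (i), fix the structure maps and write out both composites around each square on the level of root configurations in $\SP^d(\C)^m$ (resp. in $\P^d(m,n) \cong \SP^d(\C)^{mn}$, via the identification $\Hol_d^*(S^2,\CP^{mn-1}) = \{(g_1,\dots,g_{mn})\in\P^d(\C)^{mn}: \text{no common root}\}$ from the Remark after Definition \ref{dfn: Polydmn}). Going around the square one way multiplies first and then applies the jet construction; going the other way applies the jet construction to each factor and then multiplies in $\Hol^*$. These two $mn$-tuples of monic polynomials need not be literally equal, since $(fg)+(fg)^{(t)}$ is not a product of the corresponding expressions for $f$ and $g$; however they define the same point of $\Hol_{d_1+d_2}^*(S^2,\CP^{mn-1})$ after a small homotopy, because (a) they have the same common-root data away from the supports of the cubes, and (b) on each little cube we may deform the cube towards a point, during which both expressions converge to the same "local" configuration. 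This is precisely the kind of argument used to establish \cite[(4.8),(4.10)]{BM}, so I would say the verification is "completely analogous" and record the homotopy commutativity. For the structure-map square the argument is the same, carried out simultaneously over the $j$ disjoint little cubes and equivariantly with respect to the $S_j$-action, which is compatible on both sides by construction.

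For part (ii), I would assemble the squares of (i): part (i) says exactly that $\coprod_{d\geq 0} j^{d,m}_n$ intertwines the $C_2$-operad action $\{\mathcal{I}_j\}$ on $\mathrm{Poly}^m_n = \coprod_{d\geq 0}\Po^{d,m}_n$ with the Boardman–Vogt $C_2$-action $\{\mathcal{I}\}$ on $\coprod_{d\geq 0}\Hol_d^*(S^2,\CP^{mn-1})$, up to coherent homotopy. Composing further with the inclusion $\coprod_d \Hol_d^*(S^2,\CP^{mn-1}) \hookrightarrow \coprod_{d\in\Z}\Omega^2_d\CP^{mn-1} = \Omega^2\CP^{mn-1}$, which is a $C_2$-map up to homotopy by \cite[(4.8),(4.10)]{BM} (the little-cubes action on $\Omega^2\CP^{mn-1}$ is the standard one), shows that $\coprod_{d\geq 0} i^{d,m}_n = (\text{incl}) \circ (\coprod_d j^{d,m}_n)$ is a $C_2$-map up to homotopy equivalence.

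I expect the main obstacle to be precisely the non-multiplicativity of the jet operation $f\mapsto(f,f+f',\dots,f+f^{(n-1)})$: one must argue carefully that, even though the two $mn$-tuples obtained by the two ways around each square differ as tuples of polynomials, they are joined by a canonical path in the relevant no-common-high-multiplicity-root space, and that these paths are coherent enough to yield a genuine homotopy of operad maps rather than just a fiberwise homotopy. The disjointness of the little cubes' images is what makes this work — it localizes the discrepancy to small disjoint disks where a straight-line homotopy of coefficients stays inside the allowed space — and I would phrase the write-up so that this localization-plus-shrinking argument is clearly parallel to the corresponding step in \cite{BM}.
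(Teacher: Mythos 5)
Your proposal is correct and follows essentially the same route as the paper: the paper's entire proof of this lemma is the single sentence that it ``can be proved by an analogous way as given in \cite[Theorem 4.10]{BM}'', and what you have written is precisely a fleshed-out version of that analogy — a direct root-configuration verification of the two squares, with the non-multiplicativity of the jet operation $f\mapsto(f,f+f',\dots,f+f^{(n-1)})$ absorbed by a homotopy localized on the disjoint little cubes, and part (ii) obtained by composing with the inclusion $\coprod_d\Hol_d^*(S^2,\CP^{mn-1})\hookrightarrow\Omega^2\CP^{mn-1}$, which is a $C_2$-map by \cite{BM}. You correctly identify the one genuine subtlety (the two composites differ as tuples of polynomials and must be joined by a coherent path in the complement of the discriminant), which the paper leaves entirely implicit.
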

\begin{proof}
This can be proved by an analogous way as
given in \cite[Theorem 4.10]{BM}.
\end{proof}

\begin{lemma}\label{lmm: Pon}
There is a homotopy equivalence
$\Po^{n,m}_n\simeq S^{2mn-3}$.
\end{lemma}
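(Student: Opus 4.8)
The plan is to read $\Po^{n,m}_n$ off from the explicit formula (\ref{eq: Poly}) and then identify the resulting complement up to homotopy. Applying (\ref{eq: Poly}) with $d=n$ gives a homeomorphism $\Po^{n,m}_n\cong\C^{mn}\setminus\Delta$, where $\Delta=\{(x,x,\dots,x):x\in\C\}\subset\C^{mn}$ is the diagonal, a linear subspace of complex dimension one. So it suffices to determine the homotopy type of $\C^{mn}\setminus\Delta$.

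First I would straighten $\Delta$ to a coordinate axis. The linear automorphism $\Lambda:\C^{mn}\to\C^{mn}$ defined by $\Lambda(y_1,y_2,\dots,y_{mn})=(y_1,\,y_2-y_1,\dots,y_{mn}-y_1)$ is invertible and carries $\Delta$ onto the axis $A=\{(t,0,\dots,0):t\in\C\}$. Therefore
$$
\Po^{n,m}_n\ \cong\ \C^{mn}\setminus A\ =\ \C\times\big(\C^{mn-1}\setminus\{\mathbf 0\}\big).
$$
Since $(m,n)\neq(1,1)$ we have $mn\geq 2$; collapsing the contractible first factor and radially retracting $\C^{mn-1}\setminus\{\mathbf 0\}$ onto the unit sphere then yields a homotopy equivalence $\Po^{n,m}_n\simeq S^{2(mn-1)-1}=S^{2mn-3}$, as claimed.

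If one prefers not to quote (\ref{eq: Poly}), the same conclusion is available directly. A tuple of monic degree-$n$ polynomials fails to lie in $\Po^{n,m}_n$ exactly when all $m$ of them equal a common $(z-x)^n$, and by Vieta's formulas the coefficient vector of $(z-x)^n$ is $\phi(x)=\big((-1)^j\binom{n}{j}x^j\big)_{j=1}^n$; hence, identifying $\P^n(\C)^m$ with $\C^{mn}$ via coefficients, the removed set is the image of the closed polynomial embedding $x\mapsto(\phi(x),\dots,\phi(x))$, an affine line in $\C^{mn}$. Because its first coordinate $x\mapsto-nx$ is already a linear isomorphism onto this line, one can straighten the curve to a coordinate axis by an explicit triangular polynomial automorphism of $\C^{mn}$ (fix the coordinate $w_{1,1}$, and from each remaining coordinate subtract the corresponding monomial in $w_{1,1}$), and then argue exactly as above.

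The argument is essentially immediate once (\ref{eq: Poly}) is invoked. The only point needing a little care --- and what I would regard as the real obstacle in the self-contained version --- is checking that the removed curve is genuinely unknotted in $\C^{mn}$, i.e.\ that the triangular straightening map is a polynomial automorphism. One should avoid the tempting but false shortcut of splitting $\C^{mn}\setminus(\mbox{removed line})$ as $(\C^n\setminus\phi(\C))\times\C^{n(m-1)}$: the removed line is the diagonal copy of $\phi(\C)$, not a product, so no such splitting exists.
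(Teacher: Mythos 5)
Your proposal is correct and follows the same route as the paper, whose entire proof is the remark that the claim "easily follows from the homeomorphism (\ref{eq: Poly})"; you simply supply the straightening of the removed line and the radial retraction that the paper leaves implicit. The cautionary remarks about unknotting the curve and about the false product splitting are sound but not needed beyond what you already wrote.
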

\begin{proof}
This easily follows from the homeomorphism
(\ref{eq: Poly}).
\end{proof}

\begin{lemma}[\cite{CMM}, \cite{CMT}, \cite{Sn}]
\label{lmm: Snaith}
\begin{enumerate}
\item[$\I$]
If $X$ is a connected based CW complex,
there is a stable homotopy equivalence
$\dis
\Omega^2\Sigma^2X\simeq_s
\bigvee_{k=1}^{\infty}D_k(X),
$
where $D_k(X)$ denotes the space
$D_k(X)=F(\C,k)_+\wedge_{S_k}(\bigwedge^k X)$ as in (\ref{equ: DkX}).
\item[$\II$]
For integers $k\geq 1$ and $N\geq 2$,
there is a homotopy equivalence
$D_k(S^{2N-1})\simeq \Sigma^{2(N-1)k}D_k$,
where  $D_k=D_k(S^1)$ as in (\ref{equ: DkS}).
\item[$\III$]
The canonical projection
$p_{k,N}:F(\C,k)\times_{S_k}(S^{2N-1})^k\to D_k(S^{2N-1})$
has the stable section
$e_{k,N}: D_k(S^{2N-1})\to F(\C,k)\times_{S_k}(S^{2N-1})^k$.
\qed
\end{enumerate}
\end{lemma}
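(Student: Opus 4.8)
All three statements are classical; the plan is to reduce each to a known theorem and make the identifications with the notation used here precise, so that the proof becomes a sequence of citations. For (i) I would first recall that, for a connected based CW complex $X$, the May--Milgram little $2$-cubes model $\mathcal{C}_2(X)$ is homotopy equivalent to $\Omega^2\Sigma^2X$ and carries the filtration by number of cubes whose $k$-th subquotient is $C_2(k)_+\wedge_{S_k}X^{\wedge k}$; since $C_2(k)$ is $S_k$-equivariantly homotopy equivalent to $F(\C,k)$, this subquotient is homotopy equivalent to $D_k(X)=F(\C,k)_+\wedge_{S_k}X^{\wedge k}$. Snaith's theorem \cite{Sn} (see also \cite{CMT}, \cite{CMM}) says precisely that this filtration splits after one suspension, which gives $\Omega^2\Sigma^2X\simeq_s\bigvee_{k\ge1}D_k(X)$, and I would simply invoke it.

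For (ii) the idea is to untwist the sphere. Writing $S^{2N-1}=S^1\wedge S^{2N-2}$ and using commutativity of the smash product, one gets an $S_k$-equivariant homeomorphism $(S^{2N-1})^{\wedge k}\cong(S^1)^{\wedge k}\wedge(S^{2N-2})^{\wedge k}$, where $S_k$ permutes the $k$ factors of $(S^1)^{\wedge k}$ and the $k$ blocks of $(S^{2N-2})^{\wedge k}=\big((\R^{2N-2})^{\oplus k}\big)^{+}$. Hence $D_k(S^{2N-1})=F(\C,k)_+\wedge_{S_k}(S^{2N-1})^{\wedge k}$ is the Thom space over $C_k(\C)$ of the real bundle $\alpha_k\oplus\gamma_k$, where $\alpha_k=F(\C,k)\times_{S_k}\R^k$ is the permutation bundle and $\gamma_k=F(\C,k)\times_{S_k}(\R^{2N-2})^{\oplus k}\cong\alpha_k^{\oplus2(N-1)}$. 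Now $\alpha_k^{\oplus2}$ is the underlying real bundle of the complex bundle $\alpha_k\otimes_\R\C=F(\C,k)\times_{S_k}\C^k$, and this complex bundle is trivial by Lagrange interpolation: sending $\big((x_1,\dots,x_k),(v_1,\dots,v_k)\big)$ to the pair consisting of the unordered set $\{x_1,\dots,x_k\}$ and the unique polynomial of degree $<k$ taking the value $v_i$ at $x_i$ is an $S_k$-invariant homeomorphism onto $C_k(\C)\times\C^k$. Therefore $\gamma_k$ is trivial of real rank $2(N-1)k$, and since adding a trivial rank-$j$ summand suspends the Thom space $j$ times, $D_k(S^{2N-1})\cong\Sigma^{2(N-1)k}\mbox{Th}(\alpha_k)=\Sigma^{2(N-1)k}D_k$, using that $\mbox{Th}(\alpha_k)=F(\C,k)_+\wedge_{S_k}(S^1)^{\wedge k}=D_k(S^1)=D_k$.

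For (iii) I would observe that $p_{k,N}$ factors as the fibrewise collapse of the basepoint section $F(\C,k)\times_{S_k}(S^{2N-1})^k\to F(\C,k)_+\wedge_{S_k}(S^{2N-1})^k$ followed by the fibrewise collapse of the fat wedge $F(\C,k)_+\wedge_{S_k}(S^{2N-1})^k\to F(\C,k)_+\wedge_{S_k}(S^{2N-1})^{\wedge k}$. The first collapse is stably split because its cofibre $C_k(\C)$ sits inside the source as a retract, so the cofibre sequence splits after $\Sigma^\infty$. The second collapse has an $S_k$-equivariant stable section, obtained by applying $F(\C,k)_+\wedge_{S_k}(-)$ to the standard stable splitting $\Sigma^\infty(Y^{\times k})\simeq\bigvee_{S\subseteq\{1,\dots,k\}}\Sigma^\infty\big(\bigwedge_{i\in S}Y\big)$, which can be chosen compatibly with the permutation action so that its top summand provides an equivariant stable retraction onto $\Sigma^\infty(Y^{\wedge k})$. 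Composing the two stable sections gives $e_{k,N}$; this is essentially the construction of \cite{CMM}, \cite{CMT}. The step that needs genuine care — and the one I would verify most carefully — is the equivariance bookkeeping underlying (ii) and (iii): that the decomposition $(S^{2N-1})^{\wedge k}\cong(S^1)^{\wedge k}\wedge(S^{2N-2})^{\wedge k}$ respects the $S_k$-actions, that the Lagrange-interpolation trivialisation of $\alpha_k\otimes_\R\C$ is genuinely $S_k$-invariant (hence descends to a bundle isomorphism over $C_k(\C)$), and that the stable splitting used in (iii) can be chosen $S_k$-equivariantly. Once these are granted, everything reduces to a direct appeal to \cite{Sn}, \cite{CMM}, \cite{CMT}.
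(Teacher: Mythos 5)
Your proposal is correct, and it is consistent with the paper, which states this lemma as a known result and gives no proof beyond citing \cite{Sn}, \cite{CMT}, \cite{CMM} (and, for the section in (iii), the construction in \cite{CCMM2}). Your fleshed-out arguments — the May--Milgram filtration plus Snaith splitting for (i), the Lagrange-interpolation trivialisation of the complexified permutation bundle over $C_k(\C)$ for (ii), and the equivariant stable splitting of $Y^{\times k}$ for (iii) — are exactly the standard proofs of these cited facts, and the equivariance points you flag are precisely the ones settled in \cite{CMT}.
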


\begin{definition}\label{dfn: Phi, Psi}
(i)
For each $1\leq k<d$,
let $s_{k,d}:\Po^{kn,m}_n\to \Po^{dn,m}_n$ denote
the composite of stabilization maps
\begin{equation}
s_{k,d}:
\Po^{kn,m}_n\stackrel{}{\longrightarrow}
\Po^{kn+1,m}_n\stackrel{}{\longrightarrow}
\cdots \stackrel{}{\longrightarrow}
\Po^{dn-1,m}_n\stackrel{}{\longrightarrow}
\Po^{dn,m}_n.
\end{equation}
We denote by 
$\Po^{dn,m}_n/\Po^{(d-1)n,m}_n$ the mapping cone of the map
$s_{k,d}$.
\par
(ii) Let
\begin{equation}\label{eq: ed def}
e_d:\Sigma^{2(mn-2)d}D_d\to F(\C,d)\times_{S_d}(S^{2mn-3})^d
\end{equation}
denote the stable map defined by the composite of maps
$$
\begin{CD}
e_d:\Sigma^{2(mn-2)d}D_d\simeq D_d(S^{2mn-3})@>e_{d,mn-1}>>
F(\C,d)\times_{S_d}(S^{2mn-3})^d.
\end{CD}
$$
It is easy to see that $e_d$ is a stable section of the projection
$F(\C,d)\times_{S_d}(S^{2mn-3})^d\to D_d(S^{2mn-3})\simeq \Sigma^{2(mn-2)d}D_d.$
\par
(iii)
Since
there is a $S_d$-equivariant homotopy equivalence
$C_2(d)\times_{S_d}F(\C,d)$, one can
define the stable map
$
\Psi_d:\Sigma^{2(mn-2)d}D_d\to \Po^{dn,m}_n/\Po^{(d-1)n,m}_n
$
by
\begin{equation}
\Psi_d=\tilde{p}_d\circ \mathcal{I}_d^{\p}\circ e_d,
\end{equation}
where
$\tilde{p}_d:\Po^{dn,m}_n\to \Po^{dn,m}_n/\Po^{(d-1)n,m}_n$
is the natural projection and the map
$\mathcal{I}_d^{\p}$ denotes the map defined by the composite of maps
\begin{equation}
\mathcal{I}_d^{\p}:
F(\C,d)\times_{S_d}(S^{2mn-3})^d\simeq
C_2(d)\times_{S_d}(\Po^{n,m}_n)^d
\stackrel{\mathcal{I}_d}{\longrightarrow}
\Po^{dn,m}_n.
\end{equation}
Note that the following diagram
is commutative.
\begin{equation*}\label{CD: Psi}
\begin{CD}
\Sigma^{2(mn-2)d}D_d
\simeq_s
D_d(S^{2mn-3})@>e_d>>
F(\C ,d)\times_{S_d}(S^{2mn-3})^d
\\
@V{\Psi_d}VV @V{\mathcal{I}_d^{\p}}VV
\\
\Po^{dn,m}_n/\Po^{(d-1)n,m}_n @<\tilde{p}_d<< \Po^{dn,m}_n
\end{CD}
\end{equation*}
Similarly, define the stable map
$
\Phi_d:\bigvee_{k=1}^d\Sigma^{2(mn-2)k}D_k \to \Po^{dn,m}_n
$
by 
\begin{equation}
\Phi_d=(\vee s_{k,d})\circ (\vee \mathcal{I}_d^{\p})\circ (\vee e_k),
\end{equation}
Thus the following diagram is commutative:
\begin{equation*}
\begin{CD}
\dis 
\bigvee_{k=1}^d\Sigma^{2(mn-2)k}D_k
@>>\simeq_s>
\dis
\bigvee_{k=1}^dD_k(S^{2mn-3})
@>{\vee e_k}>>
\dis
\bigvee_{k=1}^d F(\C ,k)\times_{S_k}(S^{2mn-3})^k
\\
@V{\Phi_d}VV @. @V{\vee \mathcal{I}_k^{\p}}VV
\\
\Po^{dn,m}_n 
@>=>> \Po^{dn,m}_n
@<{\vee s_{k,d}}<< \dis
\bigvee_{k=1}^d \Po^{kn,m}_n
\end{CD}
\end{equation*}
\par
(iv)
For a connected space $X$, let $J_2(X)$ denote 
{\it the May-Milgram model} for  $\Omega^2\Sigma^2X$ \cite{May}
\begin{equation}
J_2(X)=\big(\coprod_{k= 1}^{\infty}F(\C,k)\times_{S_k}X^k\big)/\sim ,
\end{equation}
where $\sim$ denotes the well-known equivalence relation.
For each integer $d\geq 1$, let $J_2(X)_d\subset J_2(X)$
denote {\it the $d$-th stage filtration of the May-Milgram model} for
$\Omega^2\Sigma^2X$ defined by
\begin{equation}\label{eq: d-th MM-model}
J_2(X)_d=\big(\coprod_{k=1}^dF(\C,k)\times_{S_k}X^k\big)/\sim.
\quad
\qed
\end{equation}
\end{definition}

\begin{theorem}\label{thm: V}
The map
$\Psi_d:\Sigma^{2(mn-2)d}D_d
\stackrel{\simeq_s}{\longrightarrow}
 \Po^{dn,m}_n/\Po^{(d-1)n,m}_n$
 is a stable homotopy equivalence.
\end{theorem}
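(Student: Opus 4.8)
The plan is to compare the Vassiliev spectral sequence for $\Po^{dn,m}_n$ with the one for $\Po^{(d-1)n,m}_n$ through the stabilization map $s_{d-1,d}$, and to identify the "top" column that survives in the quotient. Concretely, the stabilization map $s_{d-1,d}\colon \Po^{(d-1)n,m}_n\to\Po^{dn,m}_n$ induces the filtration-preserving map $\hat s$ on simplicial resolutions and hence the maps $\theta^t_{k,s}$ of spectral sequences as in (\ref{equ: theta1}). By Lemma \ref{lmm: E1} together with Corollary \ref{crl: Er}, $\theta^1_{k,s}$ is an isomorphism on $E^1$ for every $k\le \lfloor\frac{(d-1)n}{n}\rfloor=d-1$, and the only genuinely new column in the target is $k=d$, where
$E^{1;dn}_{d,s}\cong H_{s-2(mn-1)d}(C_d(\C),\pm\Z)$.
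A mapping-cone / long-exact-sequence argument in homology then shows that $H_*(\Po^{dn,m}_n/\Po^{(d-1)n,m}_n)$ is computed by a spectral sequence whose only nonzero $E^1$ column is this $k=d$ column. By the classical identification (used already in \cite{CCMM}, \cite{CS} and implicit in Lemma \ref{lmm: Snaith}) of $H_*(C_d(\C),\pm\Z)$ with the (reduced) homology of $D_d(S^{2mn-3})\simeq \Sigma^{2(mn-2)d}D_d$ after the Thom/shift by $2(mn-1)d$, one gets an abstract isomorphism of homology groups
$\tilde H_*(\Po^{dn,m}_n/\Po^{(d-1)n,m}_n)\cong \tilde H_*(\Sigma^{2(mn-2)d}D_d)$.

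Next I would show that the map $\Psi_d$ constructed in Definition \ref{dfn: Phi, Psi}(iii) realizes exactly this isomorphism. This is where the commuting square with $\mathcal{I}_d^{\p}$ and the stable section $e_d$ does the work: the bottom filtration piece $\Po^{dn,m}_n\supset \Po^{(d-1)n,m}_n$ corresponds, under the $C_2$-operad structure map $\mathcal I_d$, to the configuration-space-with-labels stratum $C_2(d)\times_{S_d}(\Po^{n,m}_n)^d$, and by Lemma \ref{lmm: Pon} the label space $\Po^{n,m}_n$ is homotopy equivalent to $S^{2mn-3}$. So $\mathcal I_d^{\p}$ lands precisely in the relevant top stratum and, composed with the stable section $e_d$ of the half-smash projection $F(\C,d)\times_{S_d}(S^{2mn-3})^d\to D_d(S^{2mn-3})$ (Lemma \ref{lmm: Snaith}(iii)), produces on homology the identity of the $k=d$ column. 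Passing to the quotient by $\tilde p_d$ kills everything of filtration $\le d-1$, which is exactly the image of $s_{d-1,d}$, so $\Psi_d$ is a homology isomorphism.

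Finally, since both $\Sigma^{2(mn-2)d}D_d$ and $\Po^{dn,m}_n/\Po^{(d-1)n,m}_n$ are simply connected (the former because $mn\ge 2$ makes the suspension highly connected; the latter because $\Po^{dn,m}_n$ is simply connected by Lemma \ref{lmm: abelian} when $m,n\ge 2$, and the remaining cases $m=1$ or $n=1$ are handled by Theorems \ref{thm: KY7}, \ref{thm: KY6}, \ref{thm: GKY4}), a homology isomorphism between them is a stable homotopy equivalence by the stable Whitehead theorem. Since $\Psi_d$ was built as a stable map from the outset, this gives the claimed stable homotopy equivalence.

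The main obstacle I expect is the middle step: carefully proving that the $E^1$-level isomorphism of the "top column" is actually \emph{induced} by $\Psi_d$, i.e. checking that the geometric composite $\tilde p_d\circ\mathcal I_d^{\p}\circ e_d$ matches the algebraic Thom-isomorphism identification of $E^{1;dn}_{d,*}$ with $\tilde H_*(\Sigma^{2(mn-2)d}D_d)$, rather than merely producing \emph{some} stable map that happens to be a homology iso on abstract grounds. This requires tracking the non-degenerate simplicial resolution description of $\mathcal X^{dn}_d\setminus\mathcal X^{dn}_{d-1}$ as the affine bundle $\xi_{dn,d}$ over $C_d(\C)$ from Lemma \ref{lemma: vector bundle} (whose rank $l_{dn,d}=2m(dn-nd)+d-1=d-1$ is exactly the dimension of the open $(d-1)$-simplex, reflecting that all $d$ roots are forced), and verifying compatibility of the scanning/operad map with this bundle structure — essentially the argument of \cite[Theorem 4.10]{BM} combined with the section $e_{d,mn-1}$ of Lemma \ref{lmm: Snaith}(iii). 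Once that naturality is in hand, the rest is formal.
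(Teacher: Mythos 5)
Your strategy is genuinely different from the paper's, and its central step is exactly the one you yourself flag as ``the main obstacle''; that obstacle is not resolved by the tools you cite, so as it stands the argument has a real gap. Two points. First, even the preliminary claim that $\tilde H_*(\Po^{dn,m}_n/\Po^{(d-1)n,m}_n)$ is the $k=d$ column of $E^1$ needs justification: a priori the spectral sequence for $\Po^{dn,m}_n$ could have differentials $d^t\colon E^{t}_{d-t,s}\to E^{t}_{d,s+t-1}$ landing in the new column, which would both destroy injectivity of $(s_{d-1,d})_*$ in your setup and shrink the cokernel. This is repairable (since $\theta^t_{d-t,s}$ is surjective and commutes with the zero differential of the source spectral sequence, all differentials into the top column vanish and that column survives to $E^\infty$), but it must be said. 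Second, and more seriously, identifying $(\Psi_d)_*$ with the resulting column isomorphism requires relating the $C_2$-structure map $\mathcal{I}_d^{\p}$, which lives on the space $\Po^{dn,m}_n$ itself, to the non-degenerate simplicial resolution of the discriminant, which lives on the Alexander-dual side; the filtration of $\Po^{dn,m}_n$ by images of the stabilization maps is not visibly dual to the filtration $\{\mathcal{X}^{dn}_k\}$, and \cite[Theorem 4.10]{BM} only gives compatibility of the $C_2$-structures with the inclusion into $\Omega^2\CP^{mn-1}$, not with the resolution. Nothing in the paper's machinery supplies this compatibility, and I do not see how to obtain it without substantial new work.

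The paper's proof avoids this identification entirely and you should compare your plan with it. It first proves the global statement (Theorem \ref{thm: VIII}) that $(\vee\iota_d)\circ(\vee\mathcal{I}_d^{\p})\circ(\vee e_d)$ is a stable equivalence onto $\Po^{\infty,m}_n$, using only the scanning theorem, the $C_2$-compatibility of $i^{\infty,m}_n$, and the Snaith splitting. Then Lemma \ref{lmm: X} is established with transfer maps on infinite symmetric products: Dold's lemma \cite{Do} gives that $(s_{d-1,d})_*$ is a monomorphism and that $(\tilde{p}_d,\tau_{d-1,d})$ splits $H_*(\Po^{dn,m}_n)$ (Lemma \ref{lmm: XI}), and the null-homotopy $\sigma_{d-1}^{(mn-1)}\circ e_{d,mn-1}\simeq *$ from \cite{CCMM2} (Lemma \ref{lmm: sigma e}) kills the $\tau$-component of $(\mathcal{I}_d^{\p}\circ e_d)_*$, so $(\Psi_d)_*$ is injective over $\Q$ and $\Z/p$. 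A dimension count using Theorem \ref{thm: VIII} and the splitting then upgrades injectivity to an isomorphism, whence $\Psi_d$ is a stable equivalence. If you wish to keep your spectral-sequence computation of the cokernel of $(s_{d-1,d})_*$, you would still need the transfer/null-homotopy step (or an equivalent) to pin down what $\Psi_d$ actually induces on that cokernel; I recommend adopting the paper's argument for that part.
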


The proof of Theorem \ref{thm: V} is postponed to \S \ref{section: Proof V}, and
we prove the following result.

\begin{theorem}\label{thm: VI}
The map
$\dis \Phi_d:\bigvee_{k=1}^d\Sigma^{2(mn-2)k}D_k 
\stackrel{\simeq_s}{\longrightarrow} \Po^{dn,m}_n$
is a stable homotopy equivalence.
\end{theorem}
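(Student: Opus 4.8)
The plan is to deduce Theorem \ref{thm: VI} from Theorem \ref{thm: V} by a standard induction on $d$ using the cofibre sequences coming from the stabilization maps. First I would record that, by Lemma \ref{lmm: abelian} and Corollary \ref{crl: Er} (via the spectral sequence (\ref{SS})), the space $\Po^{dn,m}_n$ is simply connected, so all the spaces in sight are simply connected and stable homotopy equivalences can be detected on integral homology; I will therefore work with the Alexander-dual spectral sequence (\ref{SS}). The cofibration $\Po^{(d-1)n,m}_n \xrightarrow{s_{d-1,d}} \Po^{dn,m}_n \xrightarrow{\tilde p_d} \Po^{dn,m}_n/\Po^{(d-1)n,m}_n$ gives a long exact sequence in homology, and the first $n$-block of stabilizations $s_{k,d}\colon \Po^{kn,m}_n \to \Po^{(k+1)n,m}_n$ each raise $\lfloor \frac{d}{n}\rfloor$ by exactly one, so by Theorem \ref{thm: stab1} (the $\lfloor \frac{d}{n}\rfloor < \lfloor \frac{d+1}{n}\rfloor$ case, iterated) the map $s_{k,d}$ is a homology equivalence through dimension $D(kn;m,n)$. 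More to the point, for these special degrees the $E^1$-page of (\ref{SS}) for $\Po^{dn,m}_n$ stabilizes: the filtration (\ref{filt}) shows that $\mathcal{X}^{dn}_k = \mathcal{X}^{dn}$ for $k > d$, so only the columns $1 \le k \le d$ contribute, and by Lemma \ref{lemma: E1} these columns are independent of $d$. Thus the mapping-cone term $\Po^{dn,m}_n/\Po^{(d-1)n,m}_n$ has, in homology, exactly the contribution of the single column $k=d$, namely $\tilde H_{s-d}(\Po^{dn,m}_n/\Po^{(d-1)n,m}_n) \cong E^\infty_{d,s}$, and Theorem \ref{thm: V} identifies this with $\tilde H_*(\Sigma^{2(mn-2)d}D_d)$.

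Next I would run the induction. The base case $d=1$ is Lemma \ref{lmm: Pon} together with the fact that $D_1 = S^1$, so $\Sigma^{2(mn-2)}D_1 \simeq S^{2mn-3} \simeq \Po^{n,m}_n$, and one checks directly that $\Phi_1$ is (stably) this equivalence. For the inductive step, assume $\Phi_{d-1}\colon \bigvee_{k=1}^{d-1}\Sigma^{2(mn-2)k}D_k \to \Po^{(d-1)n,m}_n$ is a stable homotopy equivalence. From the commuting diagram in Definition \ref{dfn: Phi, Psi}(iii), the map $\Phi_d$ restricted to the wedge summands $k\le d-1$ factors as $s_{d-1,d}\circ \Phi_{d-1}$, while on the top summand $k=d$ it is $\Psi_d$ followed by the inclusion $\Po^{dn,m}_n \leftarrow \Po^{dn,m}_n$ through $\tilde p_d$ up to the identification; concretely, the square
\begin{equation*}
\begin{CD}
\bigvee_{k=1}^{d-1}\Sigma^{2(mn-2)k}D_k @>>> \bigvee_{k=1}^{d}\Sigma^{2(mn-2)k}D_k @>>> \Sigma^{2(mn-2)d}D_d
\\
@V{\Phi_{d-1}}VV @V{\Phi_d}VV @V{\Psi_d}VV
\\
\Po^{(d-1)n,m}_n @>{s_{d-1,d}}>> \Po^{dn,m}_n @>{\tilde p_d}>> \Po^{dn,m}_n/\Po^{(d-1)n,m}_n
\end{CD}
\end{equation*}
commutes up to stable homotopy, with the top row a cofibre sequence and the bottom row a cofibre sequence. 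Applying the five lemma to the induced ladder of long exact homology sequences, together with the inductive hypothesis on $\Phi_{d-1}$ and Theorem \ref{thm: V} on $\Psi_d$, shows that $\Phi_d$ is a homology isomorphism, hence (all spaces being simply connected, or simply because we only claim a stable equivalence) a stable homotopy equivalence.

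The main obstacle I anticipate is verifying that the left-hand and middle squares of the ladder genuinely commute up to stable homotopy — that is, that the stable section maps $e_k$ and the $C_2$-structure maps $\mathcal{I}_k^{\p}$ are compatible with the stabilization maps $s_{k,d}$, so that $\Phi_d$ really does restrict to $s_{d-1,d}\circ \Phi_{d-1}$ on the lower wedge and induce $\Psi_d$ on the quotient. This is exactly the kind of bookkeeping handled in \cite[Theorem 4.10]{BM} via Lemma \ref{lmm: C2}, and the argument here is formally the same: the stabilization map is, up to homotopy, a loop-product with a fixed class, and the operad action is associative up to homotopy, so attaching the extra $n$ copies of roots commutes with the existing $C_2$-structure. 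Once this compatibility is in place the five-lemma argument is routine. I would also remark at the end that combining Theorem \ref{thm: VI} over all $d$, together with Theorem \ref{thm: I} and Lemma \ref{lmm: Snaith}(i)--(ii), immediately yields Theorem \ref{thm: III} in the degrees of the form $dn$, and the general case $\Po^{d,m}_n$ follows from the homotopy equivalence $\Po^{d,m}_n \simeq \Po^{\lfloor d/n\rfloor n, m}_n$ supplied by Theorem \ref{crl: stabilization}.
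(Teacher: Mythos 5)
Your proposal is correct and follows essentially the same route as the paper: induction on $d$ with base case given by Lemma \ref{lmm: Pon}, the cofibre-sequence ladder comparing $\bigvee_{k\le d}\Sigma^{2(mn-2)k}D_k$ with $\Po^{(d-1)n,m}_n\to\Po^{dn,m}_n\to\Po^{dn,m}_n/\Po^{(d-1)n,m}_n$, and the five lemma applied with the inductive hypothesis on $\Phi_{d-1}$ and Theorem \ref{thm: V} on $\Psi_d$. The spectral-sequence digression in your first paragraph is not needed for the argument, and the compatibility of $e_k$, $\mathcal{I}_k^{\p}$ and $s_{k,d}$ that you flag is exactly what the paper records in the commutative diagrams of Definition \ref{dfn: Phi, Psi}.
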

\begin{proof}
We proceed by the induction on $d$.
If $d=1$, since there is a homotopy equivalence
$D_1\simeq S^1$, the assertion follows from Lemma \ref{lmm: Pon}.
Assume that the result holds for the case $d-1$, i.e.
the map $\Phi_{d-1}$ is a stable homotopy equivalence.
Note that the following diagram is commutative.
$$
\begin{CD}
\bigvee_{k=1}^d\Sigma^{2(mn-2)k}D_k @>\Phi_d>>
\Po^{dn,m}_n
\\
\Vert @. @A{s_{d-1,d} \vee 1}AA 
\\
\big(\bigvee_{k=1}^{d-1}\Sigma^{2(mn-2)k}D_k\big)
\vee \Sigma^{2(mn-2)d}D_d
@>\Phi_{d-1}\vee \mathcal{I}_d^{\p}\circ e_d>>
\Po^{(d-1)n,m}_n\vee
\Po^{dn,m}_n
\end{CD}
$$
Thus, we can easily obtain the following 
homotopy commutative diagram
$$
\begin{CD}
\bigvee_{k=1}^{d-1}\Sigma^{2(mn-2)k}D_k
@>>\subset>
\bigvee_{k=1}^{d-1}\Sigma^{2(mn-2)k}D_k
@>>>
\Sigma^{2(mn-2)d}D_d
\\
@V{\Phi_{d-1}}V{\simeq_s}V @V{\Phi_d}VV @V{\Psi_d}V{\simeq_s}V
\\
\Po^{(d-1)n,m}_n
@>{s_{d-1,d}}>>
\Po^{dn,m}_n
@>\tilde{p}_d>>
\Po^{dn,m}_n/
\Po^{(d-1)n,m}_n
\end{CD}
$$
where the horizontal sequences are cofibration sequences.
Since $\Phi_{d-1}$ and $\Psi_d$ are stable homotopy equivalences,
the map $\Phi_d$ is so.
\end{proof}

Now it is ready to prove Theorem \ref{thm: III}.

\begin{proof}[Proof of Theorem \ref{thm: III}]
Since the case $m=1$ or the case $n=1$ were obtained by
Theorem \ref{thm: CCMM} and Theorem \ref{thm: GKY4},
assume that $m\geq 2$ and $n\geq 2$.
Then the assertion easily follows from Theorem \ref{crl: stabilization}
and Theorem \ref{thm: VI}.
\end{proof}

Next we shall prove Theorem \ref{thm: IV}.
\begin{definition}
It follows from Lemma \ref{lmm: C2} and
\cite[Theorem 4.14, Theorem 4.16]{BM}
that $C_2$-structure of $\mbox{Pol}^m_n=\coprod_{d\geq 0}\Po^{d,m}_n$ and that of 
$J_2(S^{2mn-3})$
induced from the double loop product are compatible.
So the structure maps $\mathcal{I}_d$'s induce a map
\begin{equation}
\epsilon_d:J_2(S^{2mn-3})_d\to \Po^{dn,m}_n
\end{equation}
such that the following diagram is homotopy commutative:
\begin{equation}
\begin{CD}
\bigvee_{k=1}^dF(\C.k)\times_{S_k}(S^{2mn-3})^k 
@>\vee\mathcal{I}_k^{\p}>>
\bigvee_{k=1}^d\Po^{kn,m}_n
\\
@V{\vee q_k}VV @V{\vee s_{k,d}}VV
\\
J_2(S^{2mn-3})_d
@>\ep_d>> \Po^{dn,m}_n
\end{CD}
\end{equation}
where
\begin{equation}
q_k:F(\C,k)\times_{S_k}(S^{2mn-3})^k\to
J_2(S^{2mn-3})_d
\qquad (1\leq k\leq d)
\end{equation}
denotes the natural projection.
\qed
\end{definition}

\begin{theorem}\label{thm: VII}
If $m\geq 2$ and $n\geq 2$,
the map
$\epsilon_d:J_2(S^{2mn-3})_d
\stackrel{\simeq}{\longrightarrow}\Po^{dn,m}_n$
is a homotopy equivalence.
\end{theorem}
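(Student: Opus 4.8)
The plan is to prove Theorem \ref{thm: VII} by comparing both sides with the stable splittings established earlier and using the fact that both $J_2(S^{2mn-3})_d$ and $\Po^{dn,m}_n$ are simply connected (the latter by Lemma \ref{lmm: abelian}), so that a stable homology equivalence between simply connected spaces is in fact a homotopy equivalence. First I would invoke the commutative diagram defining $\epsilon_d$ together with Lemma \ref{lmm: Snaith}: the $d$-th stage filtration of the May--Milgram model has a stable splitting $J_2(S^{2mn-3})_d \simeq_s \bigvee_{k=1}^d D_k(S^{2mn-3}) \simeq_s \bigvee_{k=1}^d \Sigma^{2(mn-2)k}D_k$, compatible with the maps $q_k$ and the stable sections $e_k$. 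Under this splitting, the composite $\epsilon_d \circ (\text{splitting})$ is, by construction, precisely the stable map $\Phi_d$ of Definition \ref{dfn: Phi, Psi}(iii).

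The key step is then to observe that Theorem \ref{thm: VI} already asserts that $\Phi_d:\bigvee_{k=1}^d\Sigma^{2(mn-2)k}D_k \to \Po^{dn,m}_n$ is a stable homotopy equivalence. Since the diagram in the definition of $\epsilon_d$ commutes and the vertical maps $\vee q_k$ assemble to the stable equivalence $J_2(S^{2mn-3})_d \simeq_s \bigvee_k \Sigma^{2(mn-2)k}D_k$, it follows formally that $\epsilon_d$ itself is a stable homotopy equivalence, hence a homology isomorphism in all degrees. More carefully, I would argue by induction on $d$ using the pair of cofibration sequences: the filtration cofiber sequence $J_2(S^{2mn-3})_{d-1} \to J_2(S^{2mn-3})_d \to D_d(S^{2mn-3})$ on the source and the sequence $\Po^{(d-1)n,m}_n \to \Po^{dn,m}_n \to \Po^{dn,m}_n/\Po^{(d-1)n,m}_n$ on the target, with the middle vertical arrow $\epsilon_d$ and the outer arrows $\epsilon_{d-1}$ and $\Psi_d$ (up to the identification of the cofiber of the top sequence with $\Sigma^{2(mn-2)d}D_d$). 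By Theorem \ref{thm: V}, $\Psi_d$ is a stable equivalence, and by the inductive hypothesis $\epsilon_{d-1}$ is a homotopy equivalence; the base case $d=1$ reduces to $J_2(S^{2mn-3})_1 = S^{2mn-3} \simeq \Po^{n,m}_n$ by Lemma \ref{lmm: Pon}. The five lemma applied to the resulting long exact sequences in homology then gives that $\epsilon_d$ is a homology isomorphism.

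To upgrade from homology equivalence to homotopy equivalence, I would note that $\Po^{dn,m}_n$ is simply connected by Lemma \ref{lmm: abelian} (here $m\geq 2$, $n\geq 2$) and $J_2(S^{2mn-3})_d$ is simply connected since $2mn-3 \geq 2$ when $mn\geq 3$, so by the Whitehead theorem any map between them inducing an isomorphism on integral homology is a weak homotopy equivalence, and since both spaces have the homotopy type of CW complexes, a genuine homotopy equivalence.

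The main obstacle I anticipate is verifying the precise compatibility of the $C_2$-operad structures invoked in the definition of $\epsilon_d$ — namely that the structure maps $\mathcal{I}_d$ on $\coprod_d\Po^{d,m}_n$ and the operad action on the May--Milgram model $J_2(S^{2mn-3})$ induce a well-defined map $\epsilon_d$ making the required diagram homotopy commute. This rests on \cite[Theorem 4.14, Theorem 4.16]{BM} and Lemma \ref{lmm: C2}, and the delicate point is that the identification $\Po^{n,m}_n \simeq S^{2mn-3}$ from Lemma \ref{lmm: Pon} must be chosen so that it is $S_d$-equivariantly compatible with the little-cubes action, so that $\mathcal{I}_d$ restricted to $C_2(d)\times_{S_d}(\Po^{n,m}_n)^d$ really does factor through the $d$-th filtration stage in the way the diagram demands. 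Once that bookkeeping is in place, the rest is the formal five-lemma-plus-Whitehead argument above.
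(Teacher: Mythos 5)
Your main argument is exactly the paper's proof: precompose $\epsilon_d$ with the stable splitting $(\vee q_k)\circ(\vee e_k)$ of $J_2(S^{2mn-3})_d$, identify the composite with $\Phi_d$, invoke Theorem \ref{thm: VI} to conclude $\epsilon_d$ is a stable (hence homology) equivalence, and finish with simple connectivity of both sides. The supplementary induction-plus-five-lemma variant you sketch is not needed here (it essentially re-runs the paper's proof of Theorem \ref{thm: VI}), but it is also valid.
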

\begin{proof}
Since stable maps $\{e_k\}$ are stable sections of the
Snaith splittings (by Lemma \ref{lmm: Snaith}),
the map
$\dis (\vee q_k)\circ (\vee e_k):\bigvee_{k=1}^d\Sigma^{2(mn-2)k}D_k
\stackrel{\simeq_s}{\longrightarrow}J_2(S^{2mn-3})_d$
is a stable homotopy equivalence.
Now, it is easy to see that the following diagram is stable homotopy
commutative:
{\small
$$
\begin{CD}
\dis\bigvee_{k=1}^d\Sigma^{2(mn-2)k}D_k
@>{\vee e_k}>> 
\dis\bigvee_{k=1}^dF(\C,k)\times_{S_k}(S^{2mn-3})^k
@>{\vee \mathcal{I}_k^{\p}}>>
\dis\bigvee_{k=1}^d\Po^{kn,m}_n
\\
\Vert @. @V{\vee q_k}VV @V{\vee s_{k,d}}VV
\\
\dis
\bigvee_{k=1}^d\Sigma^{2(mn-2)k}D_k
@>(\vee q_k)\circ (\vee e_k)>{\simeq_s}>
J_2(S^{2mn-3})_d
@>{\ep_d}>>
\Po^{dn,m}_n
\end{CD}
$$}
Since $\Phi_d=(\vee s_{k,d})\circ (\vee \mathcal{I}_k^{\p})
\circ (\vee e_k)$ and it is a stable homotopy equivalence
(by Theorem \ref{thm: VI}), the map
$\ep_d$ is so.
Thus, the map $\ep_d$ is a homology equivalence.
Since 
$J_2(S^{2mn-3})_d$ and $\Po^{dn,m}_n$ are simply connected
(by Lemma \ref{lmm: abelian}),
the map $\ep_d$ is a homotopy equivalence. 
\end{proof}

Now we can give the proof of 
Theorem \ref{thm: IV}.

\begin{proof}[Proof of Theorem \ref{thm: IV}]
If $(m,n)=(3,1)$, then
$\Po^{d,m}_n=\Hol_d^*(S^2,\CP^2)=\Hol_{\lfloor\frac{d}{n}\rfloor}^*(S^2,\CP^{mn-1})$
and the assertion  holds.
If $(m,n)=(1,3)$, it was already proved by
Theorem \ref{thm: GKY4} and
 assume that
$m\geq 2$ and $n\geq 2$.
It follows from Theorem \ref{crl: stabilization}
that
it suffices to prove that
there is a homotopy equivalence
\begin{equation}\label{eq: equiv}
\Po^{dn,m}_n\simeq \Hol_d^*(S^2,\CP^{mn-1}).
\end{equation}
By Theorem \ref{thm: VII},
$
\ep_d:J_2(S^{2mn-3})_d\stackrel{\simeq}{\longrightarrow}\Po^{dn,m}_n
$
is a homotopy equivalence.
On the other hand, it follows from (ii) of Theorem \ref{thm: CCMM} that there is a
homotopy equivalence
$
J_2(S^{2mn-3})_d\simeq \Hol_d^*(S^2,\CP^{mn-1}).
$
Hence, there is a homotopy equivalence (\ref{eq: equiv})
and the assertion is obtained.
\end{proof}

\section{The space $\Po^{dn,m}_n/\Po^{(d-1)n,m}_n$}\label{section: Proof V}

In this section we give the proof of Theorem \ref{thm: V}.
Since the case $m=1$ and the case $n=1$ were already proved
in \cite[Theorem 1, Theorem 15]{CCMM2} and \cite[Theorem 2.9]{GKY4},
in this section we always assume that $m\geq 2$ and $n\geq 2$.
Let
\begin{equation}
\iota_d:
\Po^{dn,m}
\to 
\Po^{\infty ,m}_n
\end{equation}
denote the natural inclusion map induced from the stabilization maps.

\begin{theorem}\label{thm: VIII}
The stable map
$$
\dis
(\vee \iota_d)\circ (\vee \mathcal{I}_d^{\p})
\circ (\vee e_d):
\bigvee_{d=1}^{\infty}\Sigma^{2(mn-2)d}D_d
\stackrel{\simeq_s}{\longrightarrow}
\Po^{\infty ,m}_n
$$
is a stable homotopy equivalence.
\end{theorem}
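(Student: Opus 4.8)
The plan is to deduce Theorem \ref{thm: VIII} from the unstable splitting Theorem \ref{thm: VI} by passing to the colimit over $d$. Recall that Theorem \ref{thm: VI} asserts that $\Phi_d=(\vee s_{k,d})\circ(\vee\mathcal{I}_k^{\p})\circ(\vee e_k):\bigvee_{k=1}^d\Sigma^{2(mn-2)k}D_k\to\Po^{dn,m}_n$ is a stable homotopy equivalence for every $d$. First I would observe that the maps $\Phi_d$ are compatible with the stabilization maps $s_{d,d+1}:\Po^{dn,m}_n\to\Po^{(d+1)n,m}_n$ in the obvious sense: the square with left vertical the inclusion $\bigvee_{k=1}^d\Sigma^{2(mn-2)k}D_k\hookrightarrow\bigvee_{k=1}^{d+1}\Sigma^{2(mn-2)k}D_k$, right vertical $s_{d,d+1}$, and horizontals $\Phi_d,\Phi_{d+1}$, commutes up to (stable) homotopy — this is immediate from the definition of $\Phi_d$ as a wedge of composites $s_{k,d}\circ\mathcal{I}_k^{\p}\circ e_k$ together with the relation $s_{d+1,d+1}\circ\cdots$ collapsing to the inclusion. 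Hence the $\Phi_d$ assemble to a map on colimits.

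Next I would take the colimit. On the source side, $\varinjlim_d\bigvee_{k=1}^d\Sigma^{2(mn-2)k}D_k=\bigvee_{d=1}^\infty\Sigma^{2(mn-2)d}D_d$, since the wedge is the colimit of its finite sub-wedges. On the target side, $\varinjlim_d\Po^{dn,m}_n=\Po^{\infty,m}_n$: the subsequence $\{\Po^{dn,m}_n\}_d$ is cofinal in the directed system $\{\Po^{e,m}_n\}_e$ under all the stabilization maps $s^{e,m}_n$, so it computes the same colimit used in Definition \ref{def: stability}. The colimit map is precisely $(\vee\iota_d)\circ(\vee\mathcal{I}_d^{\p})\circ(\vee e_d)$, because $\iota_d$ is by definition the colimit of the $s_{d,d'}$ ($d'\to\infty$) and each $\Phi_d$ is built from $s_{k,d}\circ\mathcal{I}_k^{\p}\circ e_k$, so composing with $\iota_d$ gives $\iota_k\circ\mathcal{I}_k^{\p}\circ e_k$ on the $k$-th wedge summand.

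Finally I would invoke the standard fact that stable homology is compatible with filtered colimits: $\widetilde{H}_*(-;\Z)$ commutes with directed colimits of spaces (and of spectra), so a directed colimit of stable homology equivalences is a stable homology equivalence, hence — between spectra, or between spaces whose suspension spectra are in play — a stable homotopy equivalence. Since each $\Phi_d$ induces an isomorphism on $\widetilde{H}_*(-;\Z)$ by Theorem \ref{thm: VI}, the colimit map induces an isomorphism on $\widetilde{H}_*(-;\Z)$, which is exactly the assertion that $(\vee\iota_d)\circ(\vee\mathcal{I}_d^{\p})\circ(\vee e_d)$ is a stable homotopy equivalence.

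The main obstacle I anticipate is the bookkeeping needed to make ``the $\Phi_d$ are compatible with stabilization'' precise at the level of stable maps rather than merely homotopy classes — one must check that the homotopies witnessing the H-space/$C_2$-operad compatibility (Lemma \ref{lmm: C2}) can be chosen coherently across all $d$, so that the colimit of the diagram is well-defined and its colimit map really is the stated wedge of composites. Everything else (cofinality of $\{dn\}$ in the stabilization system, commutation of $\widetilde{H}_*$ with colimits, identifying the colimit of finite wedges) is routine.
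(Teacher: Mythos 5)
Your argument has a genuine gap: it is circular within the logical structure of this paper. You deduce Theorem \ref{thm: VIII} from Theorem \ref{thm: VI} by passing to the colimit over $d$ (and indeed Remark \ref{remark: rem} confirms that $\varinjlim_d\Phi_d=(\vee\iota_d)\circ(\vee\mathcal{I}_d^{\p})\circ(\vee e_d)$, so the formal colimit step itself is fine). But Theorem \ref{thm: VI} is proved by induction on $d$ using Theorem \ref{thm: V} (that $\Psi_d$ is a stable homotopy equivalence); Theorem \ref{thm: V} is proved using Lemma \ref{lmm: X}; and part (ii) of Lemma \ref{lmm: X} is proved by first invoking Theorem \ref{thm: VIII} to see that $(\mathcal{I}_d^{\p}\circ e_d)_*$ is injective on homology. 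So the chain of dependencies runs Theorem \ref{thm: VIII} $\Rightarrow$ Lemma \ref{lmm: X} $\Rightarrow$ Theorem \ref{thm: V} $\Rightarrow$ Theorem \ref{thm: VI}, and your proposal assumes (indirectly) exactly what is to be proved. Unless you supply an independent proof of Theorem \ref{thm: VI} that does not pass through Theorem \ref{thm: VIII}, the argument cannot stand.

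The paper avoids this by proving Theorem \ref{thm: VIII} directly, without any splitting of the finite-degree spaces. The ingredients are: $i^{\infty,m}_n:\Po^{\infty,m}_n\to\Omega^2S^{2mn-1}$ is a homotopy equivalence (Theorem \ref{thm: natural map} plus Lemma \ref{lmm: abelian}) and is a $C_2$-map (Lemma \ref{lmm: C2}(ii)); the composite $i^{\infty,m}_n\circ\iota$ is the bottom-cell inclusion $S^{2mn-3}\to\Omega^2S^{2mn-1}$, so $r_2\circ J_2(i^{\infty,m}_n)\circ J_2(\iota)$ is the standard equivalence $J_2(S^{2mn-3})\simeq\Omega^2S^{2mn-1}$; and the $e_d$ are stable sections of the Snaith splitting (Lemma \ref{lmm: Snaith}), so $(\vee p_d)\circ(\vee e_d)$ is a stable equivalence. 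Chasing the resulting diagrams identifies $(\vee\iota_d)\circ(\vee\mathcal{I}_d^{\p})\circ(\vee e_d)$, up to the equivalence $i^{\infty,m}_n$, with the Snaith splitting of $\Omega^2S^{2mn-1}$, which gives the theorem. If you want to keep your colimit perspective, the correct reading is the reverse of what you wrote: Theorem \ref{thm: VIII} is the stable, limiting statement proved first, and Theorem \ref{thm: VI} is then recovered degree by degree via Theorem \ref{thm: V}.
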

\begin{proof}
By using (ii) of Lemma \ref{lmm: C2} we see that
$i^{\infty ,m}_n:\Po^{\infty ,m}_n\to \Omega^2S^{2mn-1}$ is
a $C_2$-map such that the following diagram is homotopy commutative
\begin{equation*}\label{CD: a}
\begin{CD}
J_2(\Po^{\infty ,m}_n)
@>J_2(i^{\infty ,m}_n)>\simeq> J_2(\Omega^2S^{2mn-1})
\\
@V{r_1}VV @V{r_2}VV
\\
\Po^{\infty ,m}_n @>i^{\infty ,m}_n>\simeq>\Omega^2S^{2mn-1}
\end{CD}
\end{equation*}
where $r_1$ and $r_2$ are natural retraction maps.
Since $m\geq 2$ and $n\geq 2$, 
the two maps $i^{\infty ,m}_n$ and $J_2(i^{\infty ,m}_n)$ are
indeed homotopy equivalences (by Theorem \ref{thm: natural map}
and Lemma \ref{lmm: abelian}).
\par
Similarly, by using (ii) of Lemma \ref{lmm: C2} we have the following homotopy commutative diagram
\begin{equation*}\label{CD: b}
\begin{CD}
\bigvee_{d=1}^{\infty}F(\C ,d)\times_{S_d}(S^{2mn-3})^d
@>{\vee p_d}>> J_2(S^{2mn-3}) @>J_2(\iota )>>
J_2(\Po^{\infty ,m}_n)
\\
@V{\vee \mathcal{I}_d^{\p}}VV @. @V{r_1}VV
\\
\bigvee_{d=1}^{\infty}\Po^{dn,m}_n
@>{\vee \iota_d}>>\Po^{\infty,m}_n 
@>=>>\Po^{\infty,m}_n
\end{CD}
\end{equation*}
where
$\iota$ denotes the natural inclusion map
$\iota :S^{2mn-3}\simeq \Po^{n,m}_n\stackrel{\iota_1}{\longrightarrow}
\Po^{\infty ,m}_n.$
Now consider the composite of maps
$$
\begin{CD}
J_2(S^{2mn-3})
@>J_2(\iota)>>
J_2(\Po^{\infty,m}_n)
@>J_2(i^{\infty ,m}_n)>\simeq> J_2(\Omega^2S^{2mn-1})
@>r_2>> \Omega^2S^{2mn-1}
\end{CD}
$$
Since the map $i^{\infty,m}_n\circ \iota$ is the natural
inclusion of the bottom cell of the double suspension
$E^2:S^{2mn-3}\to \Omega^2\Sigma^2S^{2mn-3}=\Omega^2S^{2mn-1}$
(up to homotopy equivalence), the map
$r_2\circ J_2(i^{\infty ,m}_n)\circ J_2(\iota)$ is
homotopic to the natural homotopy equivalence
$J_2(S^{2mn-3})\simeq \Omega^2S^{2mn-1}$.
Thus the above two diagrams reduce to the following stable homotopy 
commutative diagram 
\begin{equation*}
\begin{CD}
\bigvee_{d=1}^{\infty}\Sigma^{2(mn-2)d}D_d @.
\\
@V{\vee e_d}VV @.
\\
\bigvee_{d=1}^{\infty}F(\C,d)\times_{S_d}(S^{2mn-3})^d
@>{\vee p_d}>>
J_2(S^{2mn-3}) \simeq
\Omega^2S^{2mn-1}
\\
@V{\vee \mathcal{I}_d^{\p}}VV @A{i^{\infty ,m}_n}A{\simeq}A
\\
\bigvee_{d=1}^{\infty}\Po^{dn,m}_n
@>{\vee \iota_d}>>
\Po^{\infty,m}_n
\end{CD}
\end{equation*}
Since the stable maps $\{e_d\}$ are stable sections of the stable homotopy equivalence
$\Omega^2S^{2mn-1}\simeq_s\vee_{d=1}^{\infty}\Sigma^{2(mn-2)d}D_d$,
the map
$(\vee p_d)\circ (\vee e_d)$ is a stable homotopy equivalence
and
the map
$(\vee \iota_d) \circ  (\vee \mathcal{I}_d^{\p})\circ 
(\vee e_d)$ 
is so.
\end{proof}
\begin{remark}\label{remark: rem}
Since $\dis\lim_{d\to\infty}\Phi_d
=(\vee \iota_d) \circ  (\vee \mathcal{I}_d^{\p})\circ 
(\vee e_d)$, the
above result may be regarded as the stable version of 
Theorem \ref{thm: VI}.
\qed
\end{remark}

\begin{lemma}\label{lmm: X}
$\I$
The induced homomorphism
$$
(s_{d-1,d})_*:
H_*(\Po^{(d-1)n,m}_n,\Z)\to H_*(\Po^{dn,m}_n,\Z)
$$
is a monomorphism.
\par
$\II$
The induced homomorphism
$$
(\Psi_{d})_*: H_*(\Sigma^{2(mn-2)d}D_d,\F)\to
H_*(\Po^{dn,m}_n/\Po^{(d-1)n,m}_n,\F)
$$
is a monomorphism for $\F =\Q$ or $\Z/p$
$(p:$ any prime$)$.
\end{lemma}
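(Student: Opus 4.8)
The plan is to prove both parts simultaneously by exploiting the stable splitting of Theorem \ref{thm: VIII} together with a comparison of the Vassiliev spectral sequences. For part (i), the key observation is that the composite of all stabilization maps $\iota_d\colon\Po^{dn,m}_n\to\Po^{\infty,m}_n$ fits into the commutative diagram of Theorem \ref{thm: VIII}, where $\Phi_d=(\vee s_{k,d})\circ(\vee\mathcal{I}_k^{\p})\circ(\vee e_k)$ and $\lim_{d\to\infty}\Phi_d=(\vee\iota_d)\circ(\vee\mathcal{I}_d^{\p})\circ(\vee e_d)$ is a stable homotopy equivalence. First I would show that $(\iota_d)_*\colon H_*(\Po^{dn,m}_n,\Z)\to H_*(\Po^{\infty,m}_n,\Z)$ is a split monomorphism: this is because $\Phi_d$ is a stable homotopy equivalence onto its image (by Theorem \ref{thm: VI}), so $\bigvee_{k=1}^d\Sigma^{2(mn-2)k}D_k$ splits off $\Po^{dn,m}_n$ stably, and this wedge summand maps by an isomorphism onto the corresponding summand $\bigvee_{k=1}^d\Sigma^{2(mn-2)k}D_k$ inside the infinite wedge that splits $\Po^{\infty,m}_n$. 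Since $(s_{d-1,d})$ followed by $\iota_d$ equals $\iota_{d-1}$, and $(\iota_{d-1})_*$ is injective, $(s_{d-1,d})_*$ must itself be injective, which is part (i).

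For part (ii), I would argue via the long exact sequence of the cofibration $\Po^{(d-1)n,m}_n\stackrel{s_{d-1,d}}{\to}\Po^{dn,m}_n\stackrel{\tilde p_d}{\to}\Po^{dn,m}_n/\Po^{(d-1)n,m}_n$. By part (i), with any field coefficients $\F=\Q$ or $\Z/p$, the map $(s_{d-1,d})_*$ on $\F$-homology is injective, so the long exact sequence breaks into short exact sequences
$$
0\to H_*(\Po^{(d-1)n,m}_n,\F)\to H_*(\Po^{dn,m}_n,\F)\to \tilde H_*(\Po^{dn,m}_n/\Po^{(d-1)n,m}_n,\F)\to 0.
$$
Now I would use the homotopy-commutative diagram relating $\Phi_d$, $\Phi_{d-1}$ and $\Psi_d$ appearing in the proof of Theorem \ref{thm: VI}: its rows are cofibration sequences and $\Phi_{d-1}$, $\Phi_d$ are stable homotopy equivalences. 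Passing to $\F$-homology and using that $\Phi_{d-1}$ induces an isomorphism onto the summand $\bigoplus_{k<d}H_*(\Sigma^{2(mn-2)k}D_k,\F)$ of $H_*(\Po^{dn,m}_n,\F)$, a diagram chase identifies $\tilde H_*(\Po^{dn,m}_n/\Po^{(d-1)n,m}_n,\F)$ with the cokernel, and shows $(\Psi_d)_*$ is exactly the inclusion of $H_*(\Sigma^{2(mn-2)d}D_d,\F)$ onto that cokernel; in particular it is injective.

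The main obstacle I anticipate is making the splitting argument in part (i) rigorous at the level of integral homology rather than just stable homotopy: one must check that the stable summand $\bigvee_{k\le d}\Sigma^{2(mn-2)k}D_k$ of $\Po^{dn,m}_n$ produced by $\Phi_d$ is carried, under $\iota_d$, precisely onto the first $d$ wedge summands of the infinite splitting of $\Po^{\infty,m}_n$, compatibly as $d$ varies. This follows because all the maps $e_k$, $\mathcal{I}_k^{\p}$, and the stabilizations $s_{k,d}$, $\iota_d$ are the same building blocks in both $\Phi_d$ and $\lim_d\Phi_d$ — so the identification is formal once one sets up the telescope carefully — but verifying the compatibility of the Snaith-type stable sections $e_k$ with stabilization (so that the summands genuinely nest) is the delicate point. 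An alternative, more computational route would be to compare the two Vassiliev spectral sequences of Section \ref{section: spectral sequence} directly: the filtration quotient $\mathcal{X}^d_k\setminus\mathcal{X}^d_{k-1}$ only changes its top stratum as $d$ increases past a multiple of $n$, so the $E^1$-terms stabilize (Lemma \ref{lmm: E1}), forcing $(s_{d-1,d})_*$ to be a monomorphism by an Euler-characteristic / dimension count on each filtration layer; this avoids the stable-homotopy bookkeeping entirely and may be the cleaner way to close the argument.
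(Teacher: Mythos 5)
There is a genuine gap, and it is essentially a circularity. Your part (i) rests on Theorem \ref{thm: VI} (that $\Phi_d$ is a stable homotopy equivalence, so that $\bigvee_{k\le d}\Sigma^{2(mn-2)k}D_k$ splits off $\Po^{dn,m}_n$ and nests compatibly inside the splitting of $\Po^{\infty,m}_n$). But in the paper's logical order Theorem \ref{thm: VI} is deduced from Theorem \ref{thm: V}, which is deduced from Lemma \ref{lmm: X}; you cannot invoke it here. The only input that is legitimately available is Theorem \ref{thm: VIII}, and that theorem only tells you that $(\iota_d\circ\mathcal{I}_d^{\p}\circ e_d)_*$ is injective, hence that $(\mathcal{I}_d^{\p}\circ e_d)_*$ is injective into $H_*(\Po^{dn,m}_n,\Z)$; it says nothing about injectivity of $(\iota_d)_*$ or $(s_{d-1,d})_*$, because $H_*(\Po^{dn,m}_n,\Z)$ is not concentrated in the range where stabilization is known to be a homology equivalence (Theorem \ref{thm: stab1} only controls degrees $\le D(d;m,n)$, while Corollary \ref{crl: Er} shows the homology extends well beyond that), so a priori classes could die under stabilization. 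The ``delicate point'' you flag --- that the finite-stage summands genuinely nest --- is not a bookkeeping formality; it is the entire content of the lemma. Your fallback via the spectral sequences fails for the same reason: when $\lfloor\frac{d+1}{n}\rfloor$ jumps, a new column appears in $E^{1;d+1}_{k,s}$, and differentials $d^t$ out of the old columns into the new one can kill classes of the old columns, so an $E^1$-comparison plus dimension count does not yield injectivity in all degrees. Part (ii) inherits the same problem, since your diagram chase again uses $\Phi_{d-1}$ and $\Phi_d$ being stable equivalences.

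What actually closes the gap in the paper is a transfer construction on infinite symmetric products (\S 8): one defines $\tau_{d-1,d}:\SP^{\infty}(\Po^{dn,m}_n)\to\SP^{\infty}(\Po^{(d-1)n,m}_n)$ by summing over deletions of roots, checks $\tilde{p}_k\circ\tau_{k,d}\circ s_{d-1,d}\equiv\tilde{p}_k\circ\tau_{k,d-1}$ modulo lower filtration, and applies Dold's lemma to conclude both that $(s_{d-1,d})_*$ is split injective and that $(\tilde{p}_d,\tau_{d-1,d})$ induces a homology splitting of $\Po^{dn,m}_n$. Part (ii) then follows by combining the injectivity of $(\mathcal{I}_d^{\p}\circ e_d)_*$ (which does come from Theorem \ref{thm: VIII}) with the fact that $\tau_{d-1,d}\circ\SP^{\infty}(\mathcal{I}_d^{\p})\circ\SP^{\infty}(e_d)$ is null-homotopic --- a consequence of the Cohen--Cohen--Mann--Milgram lemma that the Snaith section $e_{d,N}$ composed with the covering transfer to $S_j\times S_{d-j}$ is stably trivial. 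Some version of this transfer argument (or another independent proof that the splittings are compatible with stabilization) is indispensable; without it the proposal does not constitute a proof.
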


We postpone the proof of Lemma \ref{lmm: X} to \S \ref{section: transfer}
and we give the proof of Theorem \ref{thm: V}.

\begin{proof}[Proof of Theorem \ref{thm: V}]
Let $\F =\Q$ or $\Z/p$
$(p:$ any prime$)$.
By (i) of Lemma \ref{lmm: X},
there is an isomorphism of $\F$-vector spaces
$$
H_*(\Po^{\infty,m}_n,\F)\cong 
H_*(\bigvee_{d=1}^{\infty}\Po^{dn,m}_n/\Po^{(d-1)n,m}_n,\F).
$$
Hence, it follows from Lemma \ref{lmm: Snaith}, 
Theorem \ref{thm: natural map} and
Theorem \ref{thm: VIII} 
that there is an isomorphism of $\F$-vector spaces
$$
H_*(\bigvee_{d=1}^{\infty}\Sigma^{2(mn-2)d}D_d,\F)\cong
H_*(\bigvee_{d=1}^{\infty}\Po^{dn,m}_n/\Po^{(d-1)n,m}_n,\F).
$$
Thus the following equality holds for each $k\geq 1$:
$$
\dim_{\F}H_k(\bigvee_{d=1}^{\infty}\Sigma^{2(mn-2)d}D_d,\F)=
\dim_{\F}H_k(\bigvee_{d=1}^{\infty}\Po^{dn,m}_n/\Po^{(d-1)n,m}_n,\F)
<\infty
$$
However, by (ii) of Lemma \ref{lmm: X},
we see that the  homomorphism
$$
(\vee_d\Psi_{d})_*: 
H_*(\bigvee_{d=1}^{\infty}\Sigma^{2(mn-2)d}D_d,\F)\to
H_*(\bigvee_{d=1}^{\infty}\Po^{dn,m}_n/\Po^{(d-1)n,m}_n,\F)
$$
is injective.
Therefore, the  homomorphism
$$
(\vee_d\Psi_{d})_*: 
H_*(\bigvee_{d=1}^{\infty}\Sigma^{2(mn-2)d}D_d,\F)
\stackrel{\cong}{\longrightarrow}
H_*(\bigvee_{d=1}^{\infty}\Po^{dn,m}_n/\Po^{(d-1)n,m}_n,\F)
$$
is indeed an isomorphism for
$\F =\Q$ or $\Z/p$
$(p:$ any prime$)$, and
from the universal coefficient Theorem,
the homomorphism
$$
(\vee_d\Psi_{d})_*: 
H_*(\bigvee_{d=1}^{\infty}\Sigma^{2(mn-2)d}D_d,\Z)
\stackrel{\cong}{\longrightarrow}
H_*(\bigvee_{d=1}^{\infty}\Po^{dn,m}_n/\Po^{(d-1)n,m}_n,\Z)
$$
is an isomorphism.
Hence, for each $d\geq 1,$ the homomorphism
$$
(\Psi_{d})_*: H_*(\Sigma^{2(mn-2)d}D_d,\Z)\stackrel{\cong}{\longrightarrow}
H_*(\Po^{dn,m}_n/\Po^{(d-1)n,m}_n,\Z)
$$
is an isomorphism.
Therefore $\Psi_d$ is a stable homotopy equivalence.
\end{proof}

\section{Transfer maps}\label{section: transfer}

In this section we prove Lemma \ref{lmm: X}.
For this purpose, we use the transfer maps defined as follows.

\begin{definition}
(i)
For a connected based space $(X,x_0)$,
let $\SP^{\infty}(X)$
denote {\it the infinite symmetric product} of $X$ defined by
$\dis
\SP^{\infty}(X)=\lim_{d\to\infty}\SP^d(X)=\bigcup_{d\geq 0}\SP^d(X),
$
where the space $\SP^d(X)$ is regarded as the subspace of $\SP^{d+1}(X)$ by
identifying 
$\sum_{k=1}^dx_k$ with $\sum_{k=1}^dx_k+x_0$.
So the space $\SP^{\infty}(X)$ may be regarded as the abelian monoid generated by $X$
with the unit $x_0$.
\par\vspace{1mm}\par
(ii)
Since there is a homotopy equivalence
$\Po^{(d-1)n,m}_n\simeq \Po^{dn-1,m}_n$
(by Theorem \ref{crl: stabilization}),
by using this identification wee define the map
$$
\tau :\Po^{dn,m}_n \to \SP^{\infty}(\Po^{dn-1,m}_n)
\simeq \SP^{\infty}(\Po^{(d-1)n,m}_n)
\qquad\mbox{by}
$$
$$
(f_1(z),\cdots ,f_m(z))
\mapsto
\sum_{1\leq i_1,\cdots ,i_m\leq dn}(\prod^{dn}_{k=1, k\not=i_1}(z-a_{k,1}),\cdots ,\prod^{dn}_{k=1,k\not= i_m}(z-a_{k,m})),
$$
where
$(f_1(z),\cdots ,f_m(z))\in \Po^{dn,m}_n$ and
$f_j(z)=\prod^{dn}_{k=1}(z-a_{k,j})$ for $1\leq j\leq m$.
The map $\tau$ naturally extends to a homomorphism of abelian monoid
\begin{equation}
\tau_{d-1}:\SP^{\infty}(\Po^{dn,m}_n)\to \SP^{\infty}(\Po^{(d-1)n,m}_n).
\end{equation}
For each $1\leq k< d$, define the transfer map
\begin{equation}
\tau_{k,d}:\SP^{\infty}(\Po^{dn,m}_n)\to \SP^{\infty}(\Po^{dk,m}_n)
\end{equation}
as the composite
$\tau_{k,d}=\tau_k\circ  \tau_{k+1}\circ \cdots \circ \tau_{d-1}$,
i.e.
$$
\tau_{k,d}:
\SP^{\infty}(\Po^{dn,m}_n)
\stackrel{\tau_{d-1}}{\longrightarrow}
\SP^{\infty}(\Po^{(d-1)n,m}_n)
\to \cdots
\stackrel{\tau_{k}}{\longrightarrow}
\SP^{\infty}(\Po^{kn,m}_n).
$$
In particular, 
 we set $\tau_{d,d}=\mbox{id}$ for $k=d$.
 \qed
\end{definition}

\begin{lemma}\label{lmm: XI}
$\I$
The induced homomorphism
$(s_{d-1,d})_*:H_*(\Po^{(d-1)n,m}_n,\Z)
\to
H_*(\Po^{dn,m}_n,\Z)$
is a monomorphism.
\par
$\II$
The map
$$
\begin{CD}
\SP^{\infty}(\Po^{dn,m}_n)
@>(\tilde{p}_d,\tau_{d-1,d})>\simeq>
\SP^{\infty}(\Po^{dn,m}_n/\Po^{(d-1)n,m}_n)
\times
\SP^{\infty}(\Po^{(d-1)n,m}_n)
\end{CD}
$$
is a homotopy equivalence,
where
$$
\tilde{p}_k:
\SP^{\infty}(\Po^{kn,m}_n)\to
\SP^{\infty}(\Po^{kn,m}_n/\Po^{(k-1)n,m}_n)
$$
denotes the map
induced from the natural projection
$$\Po^{kn,m}_n\to \Po^{kn,m}_n/\Po^{(k-1)n,m}_n.
$$
\end{lemma}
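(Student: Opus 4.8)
I work in the case $m\geq 2$, $n\geq 2$ (the other cases being classical), and prove both parts simultaneously by induction on $d$, using the transfer to produce, after passage to infinite symmetric products, a homology retraction splitting off the previous stage of the stabilization filtration.

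First, two formal reductions. Since $\SP^{\infty}$ carries a cofibre sequence to a quasifibration sequence (Dold--Thom) and each stabilization map may be replaced by a cofibration, one gets a quasifibration
$$
\SP^{\infty}(\Po^{(d-1)n,m}_n)\xrightarrow{\ \SP^{\infty}(s_{d-1,d})\ }\SP^{\infty}(\Po^{dn,m}_n)\xrightarrow{\ \tilde p_d\ }\SP^{\infty}(\Po^{dn,m}_n/\Po^{(d-1)n,m}_n)
$$
whose homotopy exact sequence is, via $\pi_k\SP^{\infty}(Y)\cong\tilde H_k(Y;\Z)$, the homology exact sequence of the pair; and all three spaces are products of Eilenberg--MacLane spaces, so a map among them that is a $\pi_*$-isomorphism is a homotopy equivalence. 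Hence both (I) and (II) follow as soon as the self-map $R_d:=\tau_{d-1,d}\circ\SP^{\infty}(s_{d-1,d})$ of $\SP^{\infty}(\Po^{(d-1)n,m}_n)$ is a homotopy equivalence: $R_d$ is just $\tau_{d-1,d}$ composed with the fibre inclusion above, so once it is an equivalence, $(s_{d-1,d})_*$ acquires a left inverse on homology (this is (I)), the exact sequence of the pair splits, and $(\tilde p_d,\tau_{d-1,d})$ is then a $\pi_*$-isomorphism of products of Eilenberg--MacLane spaces, hence an equivalence (this is (II)).

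It remains to prove, by induction on $d$, that $R_d$ is a homotopy equivalence; the case $d=1$ is trivial since $\Po^{0,m}_n$ is a point. As $\tau_{d-1,d}$ and $\SP^{\infty}(s_{d-1,d})$ are homomorphisms of abelian monoids, so is $R_d$, and a monoid endomorphism of an infinite symmetric product is determined up to homotopy by its restriction to $\pi_*=\tilde H_*$; thus it is enough to show $(R_d)_*$ is an automorphism of $\tilde H_*(\Po^{(d-1)n,m}_n;\Z)$. Here I would expand the sum over root-deletions defining $\tau\circ s_{d-1,d}$ according to how many of the deleted roots lie among the $n$ roots that the stabilization adjoins to each coordinate. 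The terms deleting adjoined roots from every coordinate assemble, after the homotopy equivalence $\Po^{dn-1,m}_n\simeq\Po^{(d-1)n,m}_n$ of Theorem \ref{crl: stabilization} and a slide of the large roots within the fixed region where the stabilization places them, into a globally defined principal part; the remaining terms push one or more of the original roots out towards that region, and up to homotopy factor through $\SP^{\infty}$ of strictly earlier stages of the filtration, so by the inductive hypothesis (parts (I) and (II) at smaller $d$, which split the homology of each stage compatibly) they contribute only a correction that strictly lowers the stabilization filtration of $\tilde H_*(\Po^{(d-1)n,m}_n;\Z)$. One then checks that $(R_d)_*$ is an automorphism on the associated graded of that exhaustive, bounded filtration, and concludes that $(R_d)_*$ is an isomorphism.

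I expect the crux to be exactly this last point: one must pin down Segal's stabilization precisely enough to identify the principal part and see that it, together with the corrections, acts invertibly on homology, and one must verify that each correction term really does factor through an earlier stage compatibly with the inductive splittings. The remaining ingredients --- the Dold--Thom quasifibration, the rigidity of products of Eilenberg--MacLane spaces, the passage from the equivalence of $R_d$ to (I) and (II), the base case, and the checks that the stabilizations can be taken to be cofibrations and that the equivalences $\Po^{kn-1,m}_n\simeq\Po^{(k-1)n,m}_n$ underlying the $\tau_j$ respect the monoid structures --- are routine.
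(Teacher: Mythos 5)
Your formal reductions are sound and run parallel to the paper's: both arguments rest on the Dold--Thom identification $\pi_k(\SP^{\infty}(X))\cong\tilde H_k(X;\Z)$, on the quasifibration obtained by applying $\SP^{\infty}$ to the cofibration $\Po^{(d-1)n,m}_n\to\Po^{dn,m}_n\to\Po^{dn,m}_n/\Po^{(d-1)n,m}_n$, and on the fact that a $\pi_*$-isomorphism of infinite symmetric products is an equivalence. Your reduction of both (i) and (ii) to the invertibility of $R_d=\tau_{d-1,d}\circ\SP^{\infty}(s_{d-1,d})$ is also correct. But everything after that is exactly the content of the lemma, and you have explicitly left it unproved. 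The paper's proof does this step by establishing the congruence $(\tau_{k,d})_*\circ(s_{d-1,d})_*\equiv(\tau_{k,d-1})_*\ (\mathrm{mod}\ \mathrm{Im}\,(s_{k-1,k})_*)$ for the whole system of transfers $\{\tau_{k,d}\}_{1\le k<d}$ simultaneously, and then invoking Dold's Lemma 2 from \cite{Do}; that lemma is precisely the formal device that converts ``compatible transfers, triangular modulo lower filtration'' into split injectivity and the product decomposition, and it replaces your ad hoc induction on the associated graded.

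There is moreover a concrete obstacle that your sketch of the crux does not survive as stated. The stabilization $s_{d-1,d}$ is an $n$-fold composite and adjoins $n$ roots to \emph{each} of the $m$ coordinates, whereas the transfer $\tau_{d-1,d}=\tau_{d-1}$ deletes only \emph{one} root from each coordinate (dropping the degree from $dn$ to $dn-1$, followed by the identification $\Po^{dn-1,m}_n\simeq\Po^{(d-1)n,m}_n$). Hence the terms of $\tau_{d-1,d}\circ s_{d-1,d}(f)$ that delete an adjoined root from every coordinate number $n^m$, and after sliding the remaining adjoined roots each such term is homotopic to the class of $f$. So the ``principal part'' of $(R_d)_*$ is multiplication by $n^m$ (at best by $n$, if one can argue that the off-diagonal choices drop filtration), not the identity, and $n^m$ is not a unit in $\Z$. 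Consequently ``$(R_d)_*$ is an automorphism of the associated graded of the integral homology'' fails for the argument as you have set it up: a map of the form $n^m\cdot\mathrm{id}+(\text{filtration-lowering})$ need not be injective on $\tilde H_*(\,\cdot\,;\Z)$, and is certainly not surjective. To rescue the plan you must either renormalize the transfer (e.g.\ delete an unordered block of $n$ roots per coordinate, so that exactly one deletion recovers $f$), or give a genuine geometric argument that all but one of the $n^m$ diagonal-type terms land in the image of $s_{d-2,d-1}$, or abandon integral coefficients --- none of which your proposal addresses. Likewise, your assertion that the terms deleting an original root ``factor through strictly earlier stages of the filtration'' is unverified and is exactly the other half of the congruence the paper asserts. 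As it stands the proposal is an accurate outline of the strategy but contains none of the proof.
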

\begin{proof}
It is well-known that there is an natural isomorphism 
$\pi_k(\SP^{\infty}(X))\cong \tilde{H}_k(X,\Z)$ for any
connected space $X$ and any $k\geq 0$.
Furthermore,
note that the equality
$\tilde{p}_k\circ \tau_{k,d-1}=
\tilde{p}_k\circ \tau_{k,d}\circ s_{d-1,d}
$
(up to homotopy equivalence)
holds
for each $1\leq k<d$.
Thus we can show that
$$
(\tau_{k,d})_*\circ (s_{d-1,d})_*\equiv
(\tau_{k,d-1})_*
\quad (\mbox{mod Im }(s_{k-1,k})_*)
$$
on $H_*(\Po^{kn,m}_n,\Z)$ for each $1\leq k<d$.
Then by using
\cite[Lemma 2]{Do}, we can prove that 
$(s_{d-1,d})_*$ is a monomorphism and that
the map 
$(\tilde{p}_d,\tau_{d-1,d})$
induces an isomorphism on the homotopy group $\pi_k(\ )$ for any $k$.
Hence, 
the map  $(\tilde{p}_d,\tau_{d-1,d})$ is a homotopy equivalence.
\end{proof}

If we use the above result, it is easy to prove the following result.

\begin{corollary}
The map
$$
\tilde{\tau}_d=\prod_{k=1}^d\tilde{\tau}_{k,d}:\SP^{\infty}(\Po^{dn,m}_n)
\stackrel{\simeq}{\longrightarrow}
\prod_{k=1}^d\SP^{\infty}(\Po^{kn,m}_n/\Po^{(k-1)n,m}_n)
$$
is a homotopy equivalence,
where 
the map
$\tilde{\tau}_{k,d}$ denotes the composite of maps
$$
\SP^{\infty}(\Po^{dn,m}_n)
\stackrel{\tau_{k,d}}{\longrightarrow}
\SP^{\infty}(\Po^{kn,m}_n)
\stackrel{\tilde{p}_k}{\longrightarrow}
\SP^{\infty}(\Po^{kn,m}_n/\Po^{(k-1)n,m}_n).
\qed
$$
\end{corollary}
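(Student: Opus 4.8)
The plan is to prove this by induction on $d$, with Lemma~\ref{lmm: XI}~(ii) providing the inductive engine. The base case $d=1$ is immediate: since $\Po^{0,m}_n=\{*_0\}$, the space $\SP^{\infty}(\Po^{0,m}_n)$ is a point, so Lemma~\ref{lmm: XI}~(ii) already asserts that $\tilde p_1\colon\SP^{\infty}(\Po^{n,m}_n)\to\SP^{\infty}(\Po^{n,m}_n/\Po^{0,m}_n)$ is a homotopy equivalence, and by definition $\tilde\tau_1=\tilde\tau_{1,1}=\tilde p_1\circ\tau_{1,1}=\tilde p_1$. So there is nothing to check when $d=1$.

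For the inductive step I would assume that $\tilde\tau_{d-1}=\prod_{k=1}^{d-1}\tilde\tau_{k,d-1}$ is a homotopy equivalence and then invoke Lemma~\ref{lmm: XI}~(ii), which says that $(\tilde p_d,\tau_{d-1,d})$ is a homotopy equivalence from $\SP^{\infty}(\Po^{dn,m}_n)$ onto $\SP^{\infty}(\Po^{dn,m}_n/\Po^{(d-1)n,m}_n)\times\SP^{\infty}(\Po^{(d-1)n,m}_n)$. Post-composing with $\mbox{id}\times\tilde\tau_{d-1}$ (a homotopy equivalence by the inductive hypothesis) produces a homotopy equivalence
$$
\SP^{\infty}(\Po^{dn,m}_n)\stackrel{\simeq}{\longrightarrow}\prod_{k=1}^{d}\SP^{\infty}(\Po^{kn,m}_n/\Po^{(k-1)n,m}_n).
$$
The remaining point — and really the only point that needs verification — is that this composite coincides with $\tilde\tau_d$ coordinate by coordinate. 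The $d$-th coordinate is $\tilde p_d=\tilde p_d\circ\tau_{d,d}=\tilde\tau_{d,d}$. For $1\le k<d$ the $k$-th coordinate is $\tilde\tau_{k,d-1}\circ\tau_{d-1,d}=\tilde p_k\circ\tau_{k,d-1}\circ\tau_{d-1,d}$, and since $\tau_{k,d-1}\circ\tau_{d-1,d}=\tau_k\circ\cdots\circ\tau_{d-2}\circ\tau_{d-1}=\tau_{k,d}$ by the very definition of the iterated transfer, this equals $\tilde p_k\circ\tau_{k,d}=\tilde\tau_{k,d}$. Hence the composite above is exactly $\tilde\tau_d$, which closes the induction.

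I do not anticipate any genuine obstacle: all of the topological substance — the splitting of $\SP^{\infty}(\Po^{dn,m}_n)$ and the fact that $(s_{d-1,d})_*$ is a split monomorphism — has already been extracted in Lemma~\ref{lmm: XI} via the Dold's-lemma argument of \cite{Do} applied to the transfer $\tau$, so what is left is merely the bookkeeping of matching the iterated homotopy equivalence with the prescribed product of transfers. If one prefers to avoid the map-level identification, an equivalent route is to work on homotopy groups, where $\pi_*(\SP^{\infty}(-))\cong\tilde H_*(-;\Z)$ by Dold--Thom: Lemma~\ref{lmm: XI}~(i) then exhibits $\tilde H_*(\Po^{dn,m}_n;\Z)$ as the internal direct sum $\bigoplus_{k=1}^{d}\tilde H_*(\Po^{kn,m}_n/\Po^{(k-1)n,m}_n;\Z)$, and the homotopy identity $\tilde p_k\circ\tau_{k,d-1}\simeq\tilde p_k\circ\tau_{k,d}\circ s_{d-1,d}$ already used in the proof of Lemma~\ref{lmm: XI} shows that $(\tilde\tau_d)_*$ is precisely the projection onto this sum, so $\tilde\tau_d$ is a homotopy equivalence by Whitehead's theorem once one recalls that $\SP^{\infty}$ of a connected complex is a product of Eilenberg--MacLane spaces.
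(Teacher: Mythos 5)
Your induction is exactly the intended argument: the paper offers no proof beyond the remark that the corollary follows easily from Lemma \ref{lmm: XI}, and the natural way to extract it is precisely your inductive step composing $(\tilde p_d,\tau_{d-1,d})$ with $\mathrm{id}\times\tilde\tau_{d-1}$ and checking $\tau_{k,d-1}\circ\tau_{d-1,d}=\tau_{k,d}$ from the definition of the iterated transfer. The proof is correct and matches the paper's approach.
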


\begin{definition}
Let $N$ be a positive integer and assume that $1\leq j<d$.
\par
(i)
Let 
$
q_{d.j}^{(N)}:F(\C,d)\times_{S_j\times S_{d-j}}(S^{2N-1})^d
\to 
F(\C,d)\times_{S_d}(S^{2N-1})^d
$
denote the natural covering projection corresponding the subgroup
$S_j\times S_{d-j}\subset S_d$.
Define the transfer 
\begin{equation}
\sigma^{(N)} :F(\C,d)\times_{S_d}(S^{2N-1})^d
\to
\SP^{\infty}(F(\C,d)\times_{S_j\times S_{d-j}}(S^{2N-1})^d)
\end{equation}
for the covering projection $q_{d,j}^{(N)}$
by
$$
\sigma^{(N)} (x)=\sum_{\tilde{x}\in q_{d,j}^{-1}(x)}\tilde{x}
\quad
\mbox{ for }x\in F(\C,d)\times_{S_d}(S^{2N-1})^d.
$$
\par
(ii)
Let $\rho_j^{(N)}:F(\C,d)\times_{S_j\times S_{d-j}}(S^{2N-1})^d
\to
F(\C,d)\times_{S_j\times S_{d-j}}(S^{2N-1})^j$
denote the map onto the first $j$ coordinates of $(S^{2N-1})^d$, and
define the map
$$
\sigma_j^{(N)}:
F(\C,d)\times_{S_d}(S^{2N-1})^d
\to
\SP^{\infty}(F(\C,d)\times_{S_j\times S_{d-j}}(S^{2N-1})^j)
$$
by
$\sigma_j^{(N)}=\SP^{\infty}(\rho_j^{(N)})\circ \sigma^{(N)}$.
The map $\sigma_j^{(N)}$ naturally extends to a map
\begin{equation}
\tilde{\sigma}_j^{(N)}:
\SP^{\infty}(F(\C,d)\times_{S_d}(S^{2N-1})^d)
\to
\SP^{\infty}(F(\C,d)\times_{S_j\times S_{d-j}}(S^{2N-1})^j)
\end{equation}
by the usual addition
$\tilde{\sigma}_j^{(N)}(\sum_kx_k)=\sum_k\sigma_j^{(N)}(x_k)$.
\par
(iii)
Let
$$
\mathcal{I}_{j,d}^{\p}:F(\C,d)\times_{S_j\times S_{d-j}}(S^{2mn-3})^j
\to
\Po^{jn,m}_n
$$
denote the $C_2$-structure map given
by the similar way as $\mathcal{I}_d^{\p}$ was defined.
\qed
\end{definition}

\begin{lemma}[\cite{CCMM2}]\label{lmm: sigma e}
Let $1\leq j<d$.
Then the stable map
$$
\sigma_j^{(N)}\circ e_{d,N}:D_d(S^{2N-1})
\to
\SP^{\infty}(F(\C,d)\times_{S_j\times S_{d-j}}
(S^{2N-1})^j)
$$
is null-homotopic.
\end{lemma}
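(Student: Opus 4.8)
The plan is to deduce the nullity of $\sigma_j^{(N)}\circ e_{d,N}$ from the observation that the transfer $\sigma^{(N)}$ ``does not see'' the sphere coordinates, so that it is compatible with the stable splitting of the factor $(S^{2N-1})^d$, while $\rho_j^{(N)}$ destroys precisely the summand in which $e_{d,N}$ takes its values; this reproduces the argument of \cite{CCMM2}. First I would recall the defining property of the stable section $e_{d,N}$ from Lemma \ref{lmm: Snaith}(iii): up to stable homotopy $F(\C,d)_+\wedge_{S_d}(S^{2N-1})^{d}$ splits as a wedge of the summands $F(\C,d)_+\wedge_{S_k\times S_{d-k}}(S^{2N-1})^{\wedge k}$, $0\le k\le d$ (the $k$-element subsets of the $d$ sphere coordinates being permuted by $S_d$), the top one ($k=d$) being $D_d(S^{2N-1})$; the map $e_{d,N}$ is the inclusion of this top summand, in which all $d$ sphere coordinates are smashed together.

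Next I would identify the covering $q_{d,j}^{(N)}$ with the pullback, along the label-forgetting projection $F(\C,d)\times_{S_d}(S^{2N-1})^d\to C_d(\C)$, of the $\binom{d}{j}$-fold covering $F(\C,d)/(S_j\times S_{d-j})\to C_d(\C)$ that marks a $j$-element subset of the $d$ points. By naturality of transfers under pullback, $\sigma^{(N)}$ is then the transfer of this base covering ``carried along unchanged'' over the factor $(S^{2N-1})^d$; in particular it leaves the sphere coordinates untouched, and so commutes with every operation performed on the $(S^{2N-1})^d$-factor alone --- with the stable splitting maps, and with collapsing individual sphere coordinates.

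Finally I would combine these. The map $\rho_j^{(N)}$ is induced by the projection $(S^{2N-1})^d\to(S^{2N-1})^j$ onto the first $j$ coordinates; under the stable splitting of $(S^{2N-1})^d$ into the smash products $(S^{2N-1})^{\wedge S}$, $S\subseteq\{1,\dots,d\}$, this projection annihilates every summand with $S\cap\{j+1,\dots,d\}\ne\emptyset$, because forgetting even one factor of a reduced smash collapses it to a point; since $j<d$, the full subset $S=\{1,\dots,d\}$ is annihilated. As $e_{d,N}$ lands in exactly the summand $S=\{1,\dots,d\}$ and $\sigma^{(N)}$ keeps it there, the composite $\SP^{\infty}(\rho_j^{(N)})\circ\sigma^{(N)}\circ e_{d,N}=\sigma_j^{(N)}\circ e_{d,N}$ is stably null-homotopic.

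The step needing care is the identification of $q_{d,j}^{(N)}$ as a pullback and the ensuing compatibility of $\sigma^{(N)}$ with the stable splitting of the sphere factor --- the precise sense in which the transfer is ``constant in the sphere direction''; once this naturality is in place the vanishing is formal. As an independent check one can verify the composite is zero on homology: via the Thom isomorphism and the additive structure of $H_*(C_d(\C);\pm\Z)$, the sphere-coordinate component of the image of $(e_{d,N})_*$ is supported on the single class $[S^{2N-1}]^{\otimes d}\in H_{(2N-1)d}((S^{2N-1})^d)$, which is killed by forgetting any of the $d-j\ge1$ coordinates not retained by $\rho_j^{(N)}$.
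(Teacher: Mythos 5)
Your argument is correct and is essentially the content of the proof the paper defers to: the paper's own ``proof'' of this lemma is the single remark that the case $N=1$ is \cite[p.~44--45]{CCMM2} and that $N\geq 2$ is proved the same way, and what you have written out is precisely that argument (the section lands in the top Snaith summand, the Dold--Thom transfer is compatible with the $S_d$-equivariant subset splitting of $(S^{2N-1})^{\times d}$, and the coordinate projection $\rho_j^{(N)}$ annihilates the top summand because $d-j\geq 1$ smash factors are forgotten). One point should be made explicit: $e_{d,N}$ must be the canonical section arising from the stable splitting of $F(\C,d)_+\wedge_{S_d}(S^{2N-1})^{\times d}$, i.e.\ the inclusion of the top wedge summand with vanishing components into the lower summands, since an arbitrary stable section of $p_{d,N}$ could differ from this one by a map into a lower summand that survives $\sigma_j^{(N)}$, and the lemma would then fail; this is how $e_{d,N}$ is meant in Lemma \ref{lmm: Snaith}. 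A minor caution on your ``independent check'': vanishing on integral homology alone would not in general force a map into $\SP^{\infty}(-)$ to be null-homotopic, so the splitting argument (which shows the composite literally factors through a stably trivial map) should be regarded as the proof, with the homology computation only as corroboration --- though for the paper's downstream use in Lemma \ref{lmm: X} only the homological vanishing is actually needed.
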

\begin{proof}
The case $N=1$ was proved in \cite[page 44-45]{CCMM2} and
the case $N\geq 2$ can be proved completely same way.
\end{proof}

The following is easy to verify:
\begin{lemma}\label{lmm: CD-transfer}
Let $m$ and $n$ be positive integers $\geq 2$.  
Then if $1\leq j<d$ and $N=mn-1$,
the following diagram is commutative:
\begin{equation*}
\begin{CD}
\SP^{\infty}(F(\C,d)\times_{S_d}(S^{2mn-3})^d)
@>\tilde{\sigma}_j^{(mn-1)}>>
\SP^{\infty}(F(\C,d)\times_{S_j\times S_{d-j}}(S^{2mn-3})^d))
\\
@V{\SP^{\infty}(\mathcal{I}_d^{\p})}VV 
@V{\SP^{\infty}(\mathcal{I}_{j,d}^{\p})}VV
\\
\SP^{\infty}(\Po^{dn,m}_n) @>{\tau_{j,d}}>>
\SP^{\infty}(\Po^{jn,m}_n)
\quad
\qed
\end{CD}
\end{equation*}
\end{lemma}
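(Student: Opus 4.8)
The plan is to check the square on generators and, ultimately, on homology. Every element of an infinite symmetric product $\SP^{\infty}(-)$ is a finite formal sum of single-point configurations, and all four arrows of the square are homomorphisms of abelian monoids: $\SP^{\infty}(\mathcal{I}_d^{\p})$ and $\SP^{\infty}(\mathcal{I}_{j,d}^{\p})$ because they are $\SP^{\infty}$ of a map, and $\tilde\sigma_j^{(mn-1)}$ and $\tau_{j,d}$ by their very definitions. Hence it suffices to compare the two composites on a single generator $x=[(c_1,\dots ,c_d),(s_1,\dots ,s_d)]\in F(\C,d)\times_{S_d}(S^{2mn-3})^d$, and, since all the symmetric products in sight are products of Eilenberg--MacLane spaces by Dold--Thom, it is in fact enough to verify commutativity on homology; this is the level at which I would work. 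Throughout I use the $S_d$-equivariant homotopy equivalences $F(\C,d)\simeq C_2(d)$ and $S^{2mn-3}\simeq\Po^{n,m}_n$ that are built into the definitions of $\mathcal{I}_d^{\p}$ and $\mathcal{I}_{j,d}^{\p}$, so that $x$ is a configuration of $d$ disjoint little $2$-cubes, the $k$-th carrying a degree-$n$ factor $s_k\in\Po^{n,m}_n$.

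First I would identify the composite going right and then down. By construction $\sigma^{(mn-1)}$ is the transfer associated to the $\binom{d}{j}$-sheeted covering $q_{d,j}^{(mn-1)}$ corresponding to the subgroup $S_j\times S_{d-j}\subset S_d$; its sheets over $x$ are indexed by the $j$-element subsets $T\subset\{1,\dots ,d\}$, one for each way of declaring $j$ of the $d$ cubes to be the first $j$ coordinates. Composing with $\SP^{\infty}(\rho_j^{(mn-1)})$ discards the labels of the cubes outside $T$, and $\SP^{\infty}(\mathcal{I}_{j,d}^{\p})$ then multiplies, via the little-cube structure, the $j$ retained factors. Hence $\SP^{\infty}(\mathcal{I}_{j,d}^{\p})\circ\tilde\sigma_j^{(mn-1)}$ sends $x$ to $\sum_{|T|=j}\mathcal{I}_{j,d}^{\p}\big([(c_k)_{k=1}^d,(s_k)_{k\in T}]\big)$; in other words this composite realizes, cluster by cluster, the combinatorial transfer that chooses $j$ of the $d$ clusters.

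Next I would identify the composite going down and then left. The map $\SP^{\infty}(\mathcal{I}_d^{\p})$ sends $x$ to the $m$-tuple $(f_1,\dots ,f_m)\in\Po^{dn,m}_n$ with $f_i(z)=\prod_{k=1}^d c_k(s_{i;k}(z))$, so that the $dn$ roots of each $f_i$ fall into $d$ clusters of $n$, the $k$-th cluster lying inside $c_k(J^2)$. The transfer $\tau_{j,d}=\tau_j\circ\cdots\circ\tau_{d-1}$ is built from the root-deletion transfer $\tau$ together with the identifications $\Po^{(\ell+1)n-1,m}_n\simeq\Po^{\ell n,m}_n$ used to define the $\tau_\ell$. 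The aim of this step is to show that, applied to such a clustered tuple, $\tau_{j,d}$ again produces $\sum_{|T|=j}\mathcal{I}_{j,d}^{\p}\big([(c_k)_{k=1}^d,(s_k)_{k\in T}]\big)$, i.e. that $\SP^{\infty}(\mathcal{I}_d^{\p})$ intertwines the algebraic transfer $\tau_{j,d}$ with the geometric transfer $\tilde\sigma_j^{(mn-1)}$; granting this, the square commutes.

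The main obstacle is exactly that last identification. Unlike the geometric transfer, $\tau$ deletes a single root at a time, independently in each of the $m$ coordinates and counted with multiplicity, so at face value it generates far more terms than the subset transfer, and these extra terms must be reorganised, on homology, into the description in terms of deleting $d-j$ whole clusters. The tool for this is that each identification $\Po^{(\ell+1)n-1,m}_n\simeq\Po^{\ell n,m}_n$ is a composite of stabilization maps $\Po^{\ell',m}_n\to\Po^{\ell'+1,m}_n$, every one of which is a homotopy equivalence because $\lfloor\ell'/n\rfloor=\lfloor(\ell'+1)/n\rfloor$ on the relevant range (Theorem \ref{crl: stabilization}), combined with the multiplicativity of the little-cube structure map $\mathcal{I}_d$: pushing the output of $\tau$ through these equivalences and invoking multiplicativity collapses the iterated root-deletion transfer onto the cluster transfer. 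This is the analogue, in the present setting, of the homology computations in \cite{CCMM2}; once it is carried out, the remaining verifications are routine bookkeeping about transfers of finite coverings and about how the $C_2$-structure maps interact with the infinite symmetric product.
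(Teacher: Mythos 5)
Your proposal sets the problem up correctly: reducing to monoid generators, passing to homology via Dold--Thom (which is legitimate here since all four arrows are homomorphisms of topological abelian monoids and the symmetric products are generalized Eilenberg--MacLane spaces), and your descriptions of the two composites are accurate --- the right-then-down composite is indeed the ``choose a $j$-element subset $T$ of the $d$ clusters and multiply the retained factors'' transfer, and $\SP^{\infty}(\mathcal{I}_d^{\p})$ does produce tuples whose roots are clustered $n$ at a time in the $d$ disjoint cubes. (For what it is worth, the paper offers no proof at all of this lemma --- it is stated as ``easy to verify'' --- so your write-up is already more explicit than the source.)

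Nevertheless there is a genuine gap, and you have located it yourself: the entire mathematical content of the lemma is the assertion that the iterated root-deletion transfer $\tau_{j,d}$, applied to a clustered tuple and pushed through the homotopy inverses of the stabilization equivalences $\Po^{\ell n,m}_n\simeq\Po^{(\ell+1)n-1,m}_n$, induces on homology the same map as the geometric subset transfer. Your proposed resolution --- ``pushing the output of $\tau$ through these equivalences and invoking multiplicativity collapses the iterated root-deletion transfer onto the cluster transfer'' --- is a restatement of the claim, not an argument for it. Concretely, two things are left unaddressed. First, a single application of $\tau$ produces $(dn)^m$ terms, most of which delete roots from \emph{different} clusters in different coordinates $f_1,\dots,f_m$; such a term is not of the form ``product over a subset of clusters'' in any coordinate, and you give no mechanism for showing that the totality of these mixed terms contributes what it must on positive-degree homology classes of $F(\C,d)\times_{S_d}(S^{2mn-3})^d$ (on $\pi_0$ nothing need match, since $\SP^{\infty}$ of a connected space is connected, so the discrepancy only shows up in positive degrees, where it is invisible to a generator-by-generator point count). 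Second, even the ``unmixed'' terms, where all $m$ deletions occur in the same cube, appear with multiplicity $n^m$ per cluster, versus multiplicity one per cluster in $\tilde\sigma_j^{(mn-1)}$, and you do not explain how this is reconciled. Carrying out the analogue of the computation in \cite{CCMM2} (pages 44--45, for the case $n=1$) is exactly the missing step, and until it is done the proof is an outline rather than a proof.
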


\begin{lemma}\label{lmm: null homotopic}
If $m\geq 2$ and $n\geq 2$ be positive integers,
the stable map
$$
\tau_{d-1,d}\circ \SP^{\infty}(\mathcal{I}_d^{\p})\circ
\SP^{\infty}(e_d):
\SP^{\infty}(\Sigma^{2(mn-2)d}D_d) \to
\SP^{\infty}(\Po^{(d-1)n,m}_n)
$$
is null-homotopic.
\end{lemma}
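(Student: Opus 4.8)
The plan is to reduce the statement to the combination of Lemma~\ref{lmm: sigma e} and Lemma~\ref{lmm: CD-transfer} via the splitting established in Lemma~\ref{lmm: XI}. First I would recall that, by the Corollary following Lemma~\ref{lmm: XI} (the transfer splitting), the map $\tau_{d-1,d}=\prod_{k=1}^{d-1}\tilde\tau_{k,d}$ is, up to homotopy equivalence, the product of its components $\tilde\tau_{k,d}:\SP^\infty(\Po^{dn,m}_n)\to\SP^\infty(\Po^{kn,m}_n/\Po^{(k-1)n,m}_n)$ for $1\le k<d$, composed with the projections $\tilde p_k$. Since a map into a product is null-homotopic precisely when each coordinate is, it suffices to show that for every $1\le k<d$ the stable composite
$$
\SP^\infty(\Sigma^{2(mn-2)d}D_d)\xrightarrow{\SP^\infty(e_d)}\SP^\infty(F(\C,d)\times_{S_d}(S^{2mn-3})^d)\xrightarrow{\SP^\infty(\mathcal I_d^\p)}\SP^\infty(\Po^{dn,m}_n)\xrightarrow{\tilde\tau_{k,d}}\SP^\infty(\Po^{kn,m}_n/\Po^{(k-1)n,m}_n)
$$
is null-homotopic.

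Second, I would rewrite $\tilde\tau_{k,d}=\tilde p_k\circ\tau_{k,d}$ and use the commutativity of the diagram in Lemma~\ref{lmm: CD-transfer} (with $N=mn-1$): the composite $\tau_{k,d}\circ\SP^\infty(\mathcal I_d^\p)$ equals $\SP^\infty(\mathcal I_{j,d}^\p)\circ\tilde\sigma_j^{(mn-1)}$ with $j=k$. Hence the composite above becomes
$$
\tilde p_k\circ\SP^\infty(\mathcal I_{k,d}^\p)\circ\tilde\sigma_k^{(mn-1)}\circ\SP^\infty(e_d).
$$
Now $\tilde\sigma_k^{(mn-1)}\circ\SP^\infty(e_d)$ is the $\SP^\infty$-linear extension of $\sigma_k^{(mn-1)}\circ e_d$ (since $e_d$ factors through the single generator and $\tilde\sigma_k$ is additive), and by Lemma~\ref{lmm: sigma e} applied with $N=mn-1$ and $j=k$, the stable map $\sigma_k^{(mn-1)}\circ e_{d,mn-1}:D_d(S^{2mn-3})\to\SP^\infty(F(\C,d)\times_{S_k\times S_{d-k}}(S^{2mn-3})^k)$ is null-homotopic. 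Therefore so is its $\SP^\infty$-linearization, and composing on the left with $\tilde p_k\circ\SP^\infty(\mathcal I_{k,d}^\p)$ keeps it null-homotopic. Since this holds for each $k$, the product $\tau_{d-1,d}\circ\SP^\infty(\mathcal I_d^\p)\circ\SP^\infty(e_d)$ is null-homotopic.

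The main technical point to handle carefully is the bookkeeping with $\SP^\infty$: one must check that $\tau_{k,d}$ restricted to the image of $\SP^\infty(\mathcal I_d^\p)$ really is computed coordinatewise via the transfers $\tilde\sigma_j^{(mn-1)}$ as encoded in Lemma~\ref{lmm: CD-transfer}, and that passing from the map on the generating space $D_d(S^{2mn-3})$ (Lemma~\ref{lmm: sigma e}) to its linear extension on $\SP^\infty$ is legitimate in the stable category — i.e.\ that null-homotopy is preserved under $\SP^\infty(-)$, which holds because $\SP^\infty$ of a point is contractible and $\SP^\infty$ takes stable homotopies to stable homotopies. The identification $\Po^{n,m}_n\simeq S^{2mn-3}$ from Lemma~\ref{lmm: Pon} is used implicitly so that $e_d$ lands in $F(\C,d)\times_{S_d}(S^{2mn-3})^d$ in the first place. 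Once these compatibilities are in place, the argument is a direct chase; I expect no serious obstacle beyond the indexing.
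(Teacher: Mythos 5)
Your proposal is correct and uses exactly the same two ingredients as the paper's proof (Lemma~\ref{lmm: CD-transfer} to rewrite $\tau_{j,d}\circ\SP^{\infty}(\mathcal{I}_d^{\p})$ as $\SP^{\infty}(\mathcal{I}_{j,d}^{\p})\circ\tilde{\sigma}_j^{(mn-1)}$, then Lemma~\ref{lmm: sigma e} to kill $\sigma_j^{(mn-1)}\circ e_d$). The only difference is that your opening reduction through the product splitting of the Corollary is unnecessary: the case $j=k=d-1$ of your own argument is already the statement of the lemma, which is how the paper proceeds directly.
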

\begin{proof}
Note that
$\sigma_{d-1}^{(mn-1)}\circ e_d$ is null-homotopic 
by  
Lemma \ref{lmm: sigma e} (cf.  (\ref{eq: ed def})).
By  Lemma \ref{lmm: CD-transfer} we see that
\begin{eqnarray*}
\tau_{d-1,d}\circ \SP^{\infty}(\mathcal{I}_d^{\p})\circ
\SP^{\infty}(e_d)
&= &
\SP^{\infty}(\mathcal{I}_{d-1,d}^{\p})\circ
\tilde{\sigma}_{d-1}^{(mn-1)}\circ \SP^{\infty}(e_d)
\\
&=&
\SP^{\infty}(\mathcal{I}_{d-1,d}^{\p})\circ
\SP^{\infty}(\sigma_{d-1}^{(mn-1)}\circ e_d)
\simeq *.
\end{eqnarray*}
Thus
the map
$\tau_{d-1,d}\circ \SP^{\infty}(\mathcal{I}_d^{\p})\circ
\SP^{\infty}(e_d)$ is null-homotopic.
\end{proof}

Now it is ready to prove Lemma \ref{lmm: X}.

\begin{proof}[Proof of Lemma \ref{lmm: X}]
Since the first assertion follows from
(i) of Lemma \ref{lmm: XI},  it suffices to prove 
the assertion (ii).
First,
it follows from Theorem \ref{thm: VIII} that the homomorphism
$
(\mathcal{I}_d^{\p}\circ e_d)_*:
H_*(\Sigma^{2(mn-2)d}D_d,\Z) \to H_*(\Po^{dn,m}_n,\Z)
$
is a monomorphism.
Next,
by (ii) of Lemma \ref{lmm: XI} the homomorphism
$$
\begin{CD}
H_*(\Po^{dn,m}_n)
@>(\tilde{p}_{d*},\tau_{d-1,d*})>\cong>
H_*(\Po^{dn,m}_n/\Po^{(d-1)n,m}_n)\oplus
H_*(\Po^{(d-1)n,m}_n)
\end{CD}
$$
is an isomorphism.
Hence, the composite
$((\tilde{p}_{d})_*\circ (\mathcal{I}_d^{\p}\circ e_d)_*,
(\tau_{d-1,d})_*\circ (\mathcal{I}_d^{\p}\circ e_d)_*)$
$$
H_*(\Sigma^{2(mn-2)d}D_d,\Z)
\to
H_*(\Po^{dn,m}_n/\Po^{(d-1)n,m}_n,\Z)\oplus
H_*(\Po^{(d-1)n,m}_n,\Z)
$$
is a monomorphism.
However, by Lemma \ref{lmm: null homotopic},
$(\tau_{d-1,d})_*\circ (\mathcal{I}_d^{\p}\circ e_d)_*$ is trivial.
Thus, the homomorphism
$$
(\tilde{p}_d)_*\circ (\mathcal{I}_d^{\p}\circ e_d)_*: 
H_*(\Sigma^{2(mn-2)d}D_d,\Z)
\to
H_*(\Po^{dn,m}_n/\Po^{(d-1)n,m}_n,\Z)
$$
is a monomorphism.
Since
$\Psi_d=\tilde{p}_d\circ \mathcal{I}_d^{\p}\circ e_d$,
the homomorphism
$$
(\Psi_d)_*: 
H_*(\Sigma^{2(mn-2)d}D_d,\Z)
\to
H_*(\Po^{dn,m}_n/\Po^{(d-1)n,m}_n,\Z)
$$
is a monomorphism.
\end{proof}

\noindent{\bf Funding. }
The second author was supported by 
JSPS KAKENHI Grant Number 26400083, Japan.
\bibliographystyle{amsplain}


\end{document}